%------------------------------------------------------------------------------
%   Herbig--Herden--Seaton: Gorenstein algebras and Euler polynomials
%------------------------------------------------------------------------------
%
%

\documentclass[twoside]{amsart}

\numberwithin{equation}{section}
\usepackage{amsmath,amssymb,amsfonts,amsthm,latexsym}
\usepackage{graphicx,color,enumerate}
\usepackage[all]{xy}

\usepackage{array}
\newcolumntype{P}[1]{>{\centering\arraybackslash}p{#1}}

\newtheorem{theorem}{Theorem}[section]
\newtheorem{lemma}[theorem]{Lemma}
\newtheorem{proposition}[theorem]{Proposition}
\newtheorem{corollary}[theorem]{Corollary}

\theoremstyle{definition}
\newtheorem{definition}[theorem]{Definition}

\theoremstyle{remark}
\newtheorem{remark}[theorem]{Remark}

\numberwithin{equation}{section}

\newcommand{\C}{\mathbb{C}}

\newcommand{\Q}{\mathbb{Q}}
\newcommand{\R}{\mathbb{R}}

\newcommand{\Z}{\mathbb{Z}}

\newcommand{\K}{\mathbb{K}}

\newcommand{\id}{\operatorname{id}}

\newcommand{\diag}{\operatorname{diag}}
\newcommand{\co}{\colon\thinspace}

\newcommand{\Hilb}{\operatorname{Hilb}}

\newcommand{\calF}{\mathcal{F}}

\begin{document}

\title[Laurent coefficients of a Gorenstein algebra]
{The Laurent coefficients of the Hilbert series of a Gorenstein algebra}

\author{Hans-Christian Herbig}
\address{Departamento de Matem\'{a}tica Aplicada,
Av. Athos da Silveira Ramos 149, Centro de Tecnologia - Bloco C, CEP: 21941-909 - Rio de Janeiro, Brazil}
\email{herbig@labma.ufrj.br}

\author{Daniel Herden}
\address{Department of Mathematics, Baylor University,
One Bear Place \#97328,
Waco, TX 76798-7328, USA}
\email{Daniel\_Herden@baylor.edu}

\author{Christopher Seaton}
\address{Department of Mathematics and Computer Science,
Rhodes College, 2000 N. Parkway, Memphis, TN 38112}
\email{seatonc@rhodes.edu}

\thanks{
CS was supported by the E.C. Ellett Professorship in
Mathematics.
}

\dedicatory{Dedicated to the memory of Leslie Tanner Hammontree.}
\keywords{Gorenstein algebras, Euler polynomials, Bernoulli numbers}
\subjclass[2010]{Primary 05A15; Secondary 11B68, 13H10, 13A50.}

% xxxxxxxxxxxxxxxxxxxxxxxxxxxxxxxxxxxxxxxxxxxxxxxxxxxxxxxxxxxxxxxxxxxxxxxxx
% xxxxxxxxxxxxxxxxxxxxxxxxxxxxxxxxxxxxxxxxxxxxxxxxxxxxxxxxxxxxxxxxxxxxxxxxx
% xxxxxxxxxxxxxxxxxxxxxxxxxxxxxxxxxxxxxxxxxxxxxxxxxxxxxxxxxxxxxxxxxxxxxxxxx

\begin{abstract}
By a theorem of R. Stanley, a graded Cohen-Macaulay domain $A$ is \emph{Gorenstein} if and only if its Hilbert series satisfies the functional equation
\[\Hilb_A(t^{-1})=(-1)^d t^{-a}\Hilb_A(t),\]
where $d$ is the Krull dimension and $a$ is the a-invariant of $A$. We reformulate this
functional equation in terms of an infinite system of linear constraints on the Laurent coefficients of $\Hilb_A(t)$ at $t=1$. The main idea consists of examining the graded algebra $\mathcal F=\bigoplus_{r\in \Z}\mathcal F_r$ of formal power series in the variable $x$ that fulfill the condition $\varphi(x/(x-1))=(1-x)^r\varphi(x)$. As a byproduct, we derive quadratic and cubic relations for the Bernoulli numbers. The cubic relations have a natural interpretation in terms of coefficients of the Euler polynomials.
For the special case of degree $r=-(a+d)=0$, these results have been investigated previously by the authors and involved merely even Euler polynomials.
A link to the work of H. W. Gould and L. Carlitz  on power sums of symmetric number triangles is established.
\end{abstract}

\maketitle

\tableofcontents

% xxxxxxxxxxxxxxxxxxxxxxxxxxxxxxxxxxxxxxxxxxxxxxxxxxxxxxxxxxxxxxxxxxxxxxxxx
% xxxxxxxxxxxxxxxxxxxxxxxxxxxxxxxxxxxxxxxxxxxxxxxxxxxxxxxxxxxxxxxxxxxxxxxxx
% xxxxxxxxxxxxxxxxxxxxxxxxxxxxxxxxxxxxxxxxxxxxxxxxxxxxxxxxxxxxxxxxxxxxxxxxx

\section{Introduction}
\label{sec:Intro}

Let $\K$ be a field. By a \emph{positively graded $\K$-algebra} we mean a graded $\K$-algebra
$A=\bigoplus_{i=0}^\infty A_i$ such that $A_0=\K$ and for all $i$ we have $\dim_\K(A_i)<\infty$.
To a positively graded $\K$-algebra $A=\bigoplus_{i=0}^\infty A_i$ one associates its \emph{Hilbert series} $\Hilb_A(t)\in\Z[\![t]\!]$, i.e., the generating function
\begin{align}\label{eq:HilbertSeries}
\Hilb_A(t):=\sum_{i=0}^\infty\dim_\K(A_i)\:t^i
\end{align}
counting the dimensions of the homogeneous components $A_i$.
If $A$ is finitely generated, then the Hilbert series is actually a rational function $\Hilb_A(t)=:P(t)/Q(t)$.
Moreover, the pole order $d$ in the Laurent expansion
\begin{align}\label{eq:LaurentExpansion}
\Hilb_A(t)=\sum_{i=0}^\infty\frac{\gamma_i}{(1-t)^{d-i}}
\end{align}
equals the Krull dimension of $A$. See \cite{BrunsHerzog}, \cite[Section 1.4]{DerskenKemperBook} or \cite[Section 3.10]{PopovVinberg} for more details.

By a theorem of R. Stanley \cite{BrunsHerzog,Stanley}, a graded Cohen-Macaulay domain $A$ is \emph{Gorenstein} if and only if its Hilbert series satisfies the
\emph{functional equation}
\begin{align}\label{eq:FunctionalEquation}
\Hilb_A(t^{-1})=(-1)^d t^{-a}\Hilb_A(t).
\end{align}
The number $a$ is the so-called \emph{a-invariant} of $A$ and can be understood as $\deg(P)-\deg(Q)$, see
\cite[Definition 5.1.5]{VillarrealMonomAlgs}. It is well-known that
\begin{align}\label{eq:degree}
r:=2\gamma_1/\gamma_0=-(a+d),
\end{align}
see \cite[Equation (3.32)]{PopovVinberg}.
We will call this quantity the \emph{degree}, as it will play this role for a $\mathbb Z$-graded algebra in this work. In the literature (e.g. in \cite{BensonBook}), $\gamma_0$ is frequently referred to as the degree, and we hope this will not lead to any confusion.
The degree $r$ has a natural interpretation in terms of the \emph{canonical module} $\omega_A$ of $A$ (see e.g. \cite{BensonBook, BrunsHerzog}) as
the degree shift in $\omega_A\cong A[r]$, where the shifted module $M[r]$ of a graded module $M=\bigoplus_{i}M_i$ is defined via $M[r]_i:=M_{r+i}$.
In this context, $A$ is Gorenstein if and only if the canonical module $\omega_A$ is one-dimensional.

The starting point and initial motivation of this work is a reformulation of the functional equation
\eqref{eq:FunctionalEquation} in terms of an infinite system of linear constraints  on the Laurent
coefficients $\gamma_i$. In a previous paper \cite{HHS}, we have treated the case of degree $r$ equal
zero, i.e. $a+d=0$. In the general case, the actual shape of the relations depends on the sign and parity
of the degree $r$.

\begin{theorem}
\label{thrm:Relations}
Let $H(t)=\sum_{i=0}^\infty \gamma_i (1-t)^{i-d}$ be a formal Laurent series around $t=1$ of pole order at most $d$. Then
$H(t)$ satisfies  the functional equation \eqref{eq:FunctionalEquation} if and only if the following conditions are
fulfilled, where we stipulate $\gamma_i = 0$ for $i < 0$.

If the degree $r$ is even, then for each $m \geq 1$:
\begin{align}\label{eq:DegreePositiveEven}
    \sum\limits_{i=0}^{m - 1} (-1)^i {m - 1 \choose i} \gamma_{m-r+i}
    = 0.
\end{align}
If the degree $r> 0$ is odd, then for each $m \geq 1$:
\begin{align}\label{eq:DegreePositiveOdd}
      \sum\limits_{i=0}^{m} (-1)^i {2m +r - 2 \choose m - i}{m+i \choose i} \gamma_{m + i - 1}
    = 0.
\end{align}

When $r<0$, then for each
$1 \leq m \leq  \lceil-r/2\rceil$:
\begin{align}\label{eq:DegreeAnyFirst}
    \sum\limits_{i=0}^{m} {m\choose i} {1-r\choose m+i}^{-1} \gamma_{m+i-1} = 0,
\end{align}
regardless of the parity of $r$. Moreover, if $r<0$ is odd, then for each $m\ge 1$:
\begin{align}
\label{eq:DegreeNegativeOdd}
    \sum\limits_{i=0}^m (-1)^i {m\choose i} \frac{m+i}m\:\gamma_{m-r+i} = 0.
\end{align}
\end{theorem}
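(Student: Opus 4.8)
The plan is to first recast the functional equation \eqref{eq:FunctionalEquation} entirely in terms of the generating series $\varphi(x):=\sum_{i=0}^\infty\gamma_i x^i$ and to recognize that it becomes precisely the defining condition of the graded piece $\mathcal{F}_r$. Setting $u:=1-t$, one has $H(t)=u^{-d}\varphi(u)$, and the key observation is that the inversion $t\mapsto t^{-1}$ acts on $u$ by $u\mapsto 1-t^{-1}=u/(u-1)$. Since $H(s)=(1-s)^{-d}\varphi(1-s)$ for any argument $s$, a direct substitution gives $H(t^{-1})=(-1)^d(1-u)^d u^{-d}\,\varphi\!\left(u/(u-1)\right)$, while $(-1)^d t^{-a}H(t)=(-1)^d(1-u)^{r+d}u^{-d}\varphi(u)$ because $-a=r+d$. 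Cancelling the common factor $(-1)^d(1-u)^d u^{-d}$ shows that \eqref{eq:FunctionalEquation} holds if and only if $\varphi(u/(u-1))=(1-u)^r\varphi(u)$, i.e.\ if and only if $\varphi\in\mathcal{F}_r$. This reduction is the conceptual backbone and is the reason for introducing $\mathcal{F}$.

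Next I would analyze $\mathcal{F}_r$ by linearizing the involution $\sigma\colon x\mapsto x/(x-1)$. Its fixed points are $0$ and $2$, so in the coordinate $w:=x/(x-2)$ --- an invertible formal change of variable with $w=-x/2+O(x^2)$ --- the map $\sigma$ becomes simply $w\mapsto-w$. Writing $\Phi(w):=\varphi\big(2w/(w-1)\big)$ and using $1-x=(1+w)/(1-w)$, the functional equation turns into $\Phi(-w)=\tfrac{(1+w)^r}{(1-w)^r}\Phi(w)$, which is equivalent to the statement that $\Psi(w):=(1-w)^{-r}\Phi(w)$ is an \emph{even} power series. Thus $\mathcal{F}_r$ is parametrized freely by the even coefficients of $\Psi$, and membership is equivalent to the vanishing of all odd-degree coefficients of $\Psi$. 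When $r$ is even this can be phrased more symmetrically: $(1-x)^{r/2}\varphi(x)$ is $\sigma$-invariant, hence an even series in $w$.

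The relations \eqref{eq:DegreePositiveEven}--\eqref{eq:DegreeNegativeOdd} should then be produced by expanding this evenness (respectively invariance) condition and matching coefficients, with the binomial shapes and index ranges dictated by the parity and sign of $r$. The parity of $r$ governs whether the half-power $(1-x)^{r/2}$ is available and hence which residue classes of coefficients decouple, explaining the split between \eqref{eq:DegreePositiveEven} and \eqref{eq:DegreePositiveOdd}. The sign of $r$ controls whether $(1-x)^r$ is a polynomial or a genuine power series: for $r>0$ the expansion terminates and yields the finite alternating sums, whereas for $r<0$ it does not, producing in addition the distinguished finite family \eqref{eq:DegreeAnyFirst} for $1\le m\le\lceil-r/2\rceil$. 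The inverse binomial $\binom{1-r}{m+i}^{-1}$ there signals that this family is most naturally obtained from a Beta-integral representation $\binom{1-r}{m+i}^{-1}=(2-r)\int_0^1 t^{m+i}(1-t)^{1-r-m-i}\,dt$ of the coefficients of $(1-x)^{-r}$. Throughout, the convention $\gamma_i=0$ for $i<0$ must be tracked carefully, since for $r>0$ the first relations are vacuous until the shifted indices $m-r+i$ become nonnegative.

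The main obstacle is not the reduction but the combinatorial bookkeeping and, above all, the proof of \emph{sufficiency}. Necessity of each family follows from matching finitely many coefficients, but to obtain the ``if and only if'' one must show that the listed relations cut out exactly $\mathcal{F}_r$ --- equivalently, that they are logically equivalent to the single clean system ``all odd coefficients of $\Psi$ vanish.'' Because the natural derivations produce relations indexed and triangularized differently (low-index versus high-index $\gamma$'s), establishing this equivalence, and in particular reconciling the finite family \eqref{eq:DegreeAnyFirst} with the infinite family \eqref{eq:DegreeNegativeOdd} in the negative-odd case without redundancy or deficiency, is where the real work lies. I would handle this by exhibiting, in each case, a triangular structure that lets one solve for a new coefficient at each step, thereby matching the dimension count coming from the free even part of $\Psi$.
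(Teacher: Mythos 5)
Your opening reduction is correct and is exactly the paper's first step: the substitution $u=1-t$, $\varphi(u)=u^dH(1-u)$ converts the functional equation \eqref{eq:FunctionalEquation} into $\varphi(u/(u-1))=(1-u)^r\varphi(u)$, i.e.\ membership in $\mathcal{F}_r$ (Proposition \ref{prop:GradedAlgebra}). Your second paragraph is also correct, and is in fact a nice self-contained route to the paper's structure theory: with $w=x/(x-2)$ the involution becomes $w\mapsto -w$, and since $1-w=-2/(x-2)$ and $w^2=\lambda_2(x)$, the evenness of $\Psi(w)=(1-w)^{-r}\Phi(w)$ is precisely the normal form $\varphi(x)=\sum_i\delta_i\,x^{2i}(x-2)^{-r-2i}$ of Equation \eqref{eq:membF_r}, i.e.\ the content of Theorems \ref{thrm:Deg0HHS} and \ref{thrm:LaurentPoly}, which the paper instead imports from \cite{HHS}. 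Up to this point you have a clean derivation, and your dimension-count/triangularity strategy for sufficiency is indeed how the paper argues (Proposition \ref{prop:NegFirstReformulate}, Corollary \ref{cor:OddFromEvBasic}).

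The genuine gap is that the theorem asserts equivalence with four \emph{specific} families of relations, and that derivation never happens: "expanding the evenness condition and matching coefficients" is precisely what does not work in a routine way, as your own closing paragraph concedes. Concretely: (i) for even $r$ your proposed symmetrization $(1-x)^{r/2}\varphi(x)\in\mathcal{F}_0$ is valid but multiplies by a series, mixing all coefficients, so it cannot produce the pure index shift $\gamma_{m-r+i}$ in \eqref{eq:DegreePositiveEven}; the correct move is $x^r\varphi(x)\in\mathcal{F}_0$ (Lemma \ref{lem:EvenJustShifted}), which for $r<0$ forces the decomposition $\varphi=\varphi_1+\varphi_2$ into a polynomial of degree at most $-r$ plus a tail --- the step that separates the finite family \eqref{eq:DegreeAnyFirst} from \eqref{eq:DegreePositiveEven} and \eqref{eq:DegreeNegativeOdd}, and which your plan does not isolate. (ii) For odd $r=2k+1>0$, the relations \eqref{eq:DegreePositiveOdd} do not fall out of coefficient matching in $w$: the paper needs the rescaled form \eqref{eq:RelPosOddKrewrite}, its characterization via a $(2k-1)$st derivative of $x^{2k}(1-x)^{-1}\psi(x^2/(1-x))$ (Lemma \ref{lem:RelPosOddCharacterize}), a separate integral argument for $k=0$ (Lemma \ref{lem:Relr1}), and the closed-form differentiation identity of Lemma \ref{lem:DiffFormula} to land back in $(x-2)^{-2k-1}\cdot\mathcal{F}_0$; none of this machinery, or a substitute for it, appears in your plan. (iii) For \eqref{eq:DegreeAnyFirst} the real content is that each basis polynomial $g_s(x)=(x-2)^{-r-2s}x^{2s}$ satisfies those $\lceil -r/2\rceil$ relations, which the paper proves via the binomial identity of Lemma \ref{lem:NegFirstBinomIdentity} (vanishing at $x=-1/2$ of an odd function in $2x+1$); your Beta-integral remark is a heuristic about where the coefficients $\binom{1-r}{m+i}^{-1}$ might come from, not a proof. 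In short, you have correctly established the structure of $\mathcal{F}_r$ and the right matching strategy, but the combinatorial heart of the theorem --- deriving and verifying the four stated coefficient systems --- is deferred rather than done.
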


Note that the relations among the Laurent coefficients are only unique up to scalar factors.
Alternate choices of the scaling will arise in the course of Section \ref{sec:ProofRelations}
and be considered as well in Section \ref{sec:CoefTriang}.

The relations given in Theorem \ref{thrm:Relations} in the case $r=0$ were first observed in
\cite[Section 8]{HerbigSeatonEM} in the context of experimental computations of the Laurent
coefficients of Hilbert series of a certain class of rings related to symplectic quotients by
the circle. An understanding of these relations suggested that these rings are all Gorenstein
of degree $r = 0$ (sometimes called \emph{graded Gorenstein} or \emph{strongly Gorenstein}),
which was then verified in \cite[Section 4]{HHS}. The initial purpose of Theorem \ref{thrm:Relations}
is to demonstrate that analogous relations occur for Gorenstein rings of arbitrary degree $r$.
Hence, one application of Theorem
\ref{thrm:Relations} is to contexts in which the $\gamma_i$ are more directly computable or
otherwise more accessible than the rational expression of the Hilbert series. The relations for
small values of $m$ can then be used to prove that a ring is not Gorenstein without computing
a more complete description of the ring or its Hilbert series.
We illustrate an example of such a context in Section~\ref{sec:Molien} for invariants of
finite groups.

In addition, the connection between the relations of Theorem \ref{thrm:Relations} and the functional
equation \eqref{eq:FunctionalEquation} has interesting consequences for some combinatorial
number sequences and other combinatorial constructions.
After introducing and describing the structure of the $\Z$-graded algebra $\calF=\bigoplus_{r\in\Z}\calF_r$
in Section \ref{sec:Relations}, the principal tool used in the proof of Theorem \ref{thrm:Relations}
that is then given in Section \ref{sec:ProofRelations}, we turn our attention to these consequences.
In Section \ref{sec:EulerPoly}, we reformulate the constraints of Theorem~\ref{thrm:Relations} to
express the odd coefficients $\gamma_{2i+1}$ in terms of the even coefficients $\gamma_{2i}$ and
vice versa, see Theorems \ref{thrm:OddFromEven} and \ref{thrm:EvenFromOdd}. These reformulations are
most succinctly stated using the coefficients of Euler polynomials as well as the Bernoulli numbers.
Hence, the algebra structure of $\calF$ yields a unified proof of a large collection of quadratic
and cubic identities for the Bernoulli numbers in Section \ref{sec:Identities}, see Theorems
\ref{thrm:Identities1} and \ref{thrm:Identities2}.

In Section \ref{sec:PowerSum}, we consider another application to combinatorics, connecting
the functional equation \eqref{eq:FunctionalEquation} to the power sum identities for symmetric
number arrays (e.g. Pascal's triangle)
developed by H. W. Gould \cite{Gould} and L. Carlitz \cite{Carlitz}. Specifically, by
demonstrating that a generating function for these power sums is an element of $\calF_1$,
we re-derive these power sum identities as corollaries of the case $r=1$ of Theorem \ref{thrm:Relations}
and its reformulations in Section \ref{sec:EulerPoly}. To complete the paper,
we illustrate the coefficient triangles that appear in Theorem \ref{thrm:Relations} by
displaying specific examples and consider the rescaling of the rows in Section \ref{sec:CoefTriang}.

Note that some of the auxiliary lemmas in this paper can be derived as special cases of known identities.
For instance, Lemma 4.5 can be seen to be a consequence of \cite[Equation (15.2.5)]{AbramowitzStegun}.
However, for the benefit of the reader, we provide elementary proofs.

Finally, let us observe that there exist Gorenstein algebras for each degree $r\in \Z$.
For example, a hypersurface of degree $k$ in affine space is Gorenstein of degree $r=-k$,
while invariant rings of unimodular finite group representations with no pseudoreflections
are Gorenstein of degree $r=0$ by \cite{WatanabeGor1,WatanabeGor2}, see Section \ref{sec:Molien}.
On the other hand, a polynomial ring $\K[x_1,x_2,\dots, x_d]$ in variables of degree
$\deg(x_i)\ge 1$ has $r=\sum_i (\deg(x_i)-1)$.

% xxxxxxxxxxxxxxxxxxxxxxxxxxxxxxxxxxxxxxxxxxxxxxxxxxxxxxxxxxxxxxxxxxxxxxxxx
% xxxxxxxxxxxxxxxxxxxxxxxxxxxxxxxxxxxxxxxxxxxxxxxxxxxxxxxxxxxxxxxxxxxxxxxxx
% xxxxxxxxxxxxxxxxxxxxxxxxxxxxxxxxxxxxxxxxxxxxxxxxxxxxxxxxxxxxxxxxxxxxxxxxx

\section*{Acknowledgements}
HCH and CS would like to thank Baylor University, and CS would like to thank
the Universidade Federal do Rio de Janeiro, for their hospitality during work on
this project. We would also like to thank Emily Cowie, whose work on a separate
project \cite{CHHS} yielded a large family of examples used to discover some of
the results contained here.

% xxxxxxxxxxxxxxxxxxxxxxxxxxxxxxxxxxxxxxxxxxxxxxxxxxxxxxxxxxxxxxxxxxxxxxxxx
% xxxxxxxxxxxxxxxxxxxxxxxxxxxxxxxxxxxxxxxxxxxxxxxxxxxxxxxxxxxxxxxxxxxxxxxxx
% xxxxxxxxxxxxxxxxxxxxxxxxxxxxxxxxxxxxxxxxxxxxxxxxxxxxxxxxxxxxxxxxxxxxxxxxx

\section{The $\gamma_i$ for invariants of finite groups}
\label{sec:Molien}

In this section, we give an example of an application of Theorem \ref{thrm:Relations}
to computational invariant theory. We refer the reader to \cite[Sections 2.5--6]{BensonBook},
\cite[Section 2.6]{DerskenKemperBook}, or \cite[Section 2.2]{SturmfelsBook} for
background on the topic considerer here. For simplicity, we work over $\C$.

Let $V$ be a vector space over $\C$ of dimension $n$, and let $G$ be a finite subgroup of
$\operatorname{GL} (V)$. By Molien's formula, the Hilbert series of the ring $\C[V]^G$
of $G$-invariant polynomials is given by
\begin{equation}
\label{eq:Molien}
    \Hilb_{\C[V]^G}(t) =
    \frac{1}{|G|} \sum\limits_{g\in G} \frac{1}{\det( \id - t g )}.
\end{equation}
It is a well-known consequence of this result that the first two coefficients of the Laurent
expansion of $\Hilb_{\C[V]^G}(t)$ at $t = 1$ are equal to
\[
    \gamma_0 = \frac{1}{|G|}
    \quad\quad\mbox{and}\quad\quad
    \gamma_1 = \frac{p}{2|G|},
\]
where $p$ is the number of \emph{pseudoreflections} in $G$, elements of $G$ whose fixed set in
$V$ has codimension $1$. By the same method used to determine these coefficients, we now demonstrate that
the $\gamma_k$ for $k \leq n$ can be computed using only those elements of $G$
that fix a subset of codimension at most $k$.

For $g \in G$, let $\mu_1(g),\ldots,\mu_n(g)$ denote the eigenvalues
of $g$, where we assume that any eigenvalues with value $1$ occur last on
this list. Choosing for each $g$ a basis for $V$ with respect to which
$g$ is diagonal, Equation \eqref{eq:Molien} becomes
\begin{equation}
\label{eq:MolienDiag}
    \Hilb_{\C[V]^G}(t) =
    \frac{1}{|G|} \sum\limits_{g\in G} \frac{1}{\prod_{j=1}^n \big(1 - \mu_j(g)t\big)}.
\end{equation}
Let $p(g)$ denote the number of $j$ such that $\mu_j(g) = 1$.
For $0 \leq k \leq n$, let $G_k = \{ g \in G : p(g) = k \}$,
$G_{\leq k} = \{ g \in G : p(g) \leq k \}$, and $G_{\geq k} = \{ g \in G : p(g) \geq k \}$.
As the term in Equation \eqref{eq:MolienDiag} corresponding to an element $g \in G$ has a pole
order equal to $p(g)$ at $t=1$, the coefficient $\gamma_k$ of the Laurent series depends only
on the elements of $G_{\geq n-k}$. Specifically, we can express
\begin{align*}
    \Hilb_{\C[V]^G}(t)
    &=
    \frac{1}{|G|}\sum\limits_{k=0}^n (1 - t)^{k-n}
    \sum\limits_{g\in G_{n-k}}
    \frac{1}{\prod_{j=1}^k \big( 1 - \mu_j(g)t \big)}
    \\&=
    \frac{1}{|G|}\sum\limits_{k=0}^n (1 - t)^{k-n}
    \sum\limits_{g\in G_{n-k}} \prod\limits_{j=1}^k
    \sum\limits_{i=0}^\infty \frac{- \mu_j(g)^i}{\big( \mu_j(g) - 1 \big)^{i+1}}(1 - t)^i.
\end{align*}
Then as $G_n = \{\id\}$, the sum over $g\in G_n$ is simply equal to $1$, yielding
$\gamma_0 = 1/|G|$. Similarly,
\begin{align*}
    \Hilb_{\C[V]^G}(t)
    &=
    \frac{1}{|G|}(1 - t)^{-n} + \frac{1}{|G|}(1 - t)^{1-n}
        \sum\limits_{g\in G_{n-1}} \frac{1}{1 - \mu_1(g)t}
    \\&\quad\quad
        + \frac{1}{|G|} \sum\limits_{g\in G_{\leq n-2}}
        \frac{1}{\prod_{j=1}^n \big( 1 - \mu_j(g)t\big)},
\end{align*}
where the last sum has a pole at $t=1$ of order at most $n-2$. In particular,
\begin{align*}
   \gamma_1 &=
    \frac{1}{|G|}\sum\limits_{g\in G_{n-1}} \frac{-1}{\mu_1(g) - 1}
    =\frac{1}{2|G|}\sum\limits_{g\in G_{n-1}} \left( \frac{-1}{\mu_1(g) - 1} + \frac{-1}{\mu_1(g^{-1}) - 1}\right)\\
    &=\frac{1}{2|G|}\sum\limits_{g\in G_{n-1}} \left( \frac{-1}{\mu_1(g) - 1} + \frac{-1}{\mu_1(g)^{-1} - 1}\right)
    =\frac{1}{2|G|}\sum\limits_{g\in G_{n-1}}1 =\frac{|G_{n-1}|}{2|G|}=\frac{p}{2|G|}.
\end{align*}
Continuing in this way,
\[
    \gamma_2    =
    \frac{1}{|G|}\left( \sum\limits_{g\in G_{n-1}} \frac{-\mu_1(g)}{\big( \mu_1(g) - 1\big)^2}
    + \sum\limits_{g\in G_{n-2}} \frac{1}{\big( \mu_1(g) - 1\big)\big( \mu_2(g) - 1\big)} \right),
\]
\begin{align*}
    \gamma_3
    &=
    \frac{1}{|G|}\left( \sum\limits_{g\in G_{n-1}} \frac{-\mu_1(g)^2}{\big( \mu_1(g) - 1\big)^3}
    + \sum\limits_{g\in G_{n-2}} \frac{\mu_1(g)+\mu_2(g)-2\mu_1(g)\mu_2(g)}
        {\big( \mu_1(g) - 1\big)^2\big( \mu_2(g) - 1\big)^2} \right.
    \\&\quad\quad\quad\quad
    + \left.\sum\limits_{g\in G_{n-3}} \frac{-1}{\big(\mu_1(g) - 1\big)\big(\mu_2(g) - 1\big)\big(\mu_3(g) - 1\big)}
    \right),
\end{align*}
etc.

To apply Theorem \ref{thrm:Relations}, given a finite group $G$, the value of $r$
is determined from $\gamma_0$ and $\gamma_1$ using Equation \eqref{eq:degree}.
Note that in this context, $r = p = |G_{n-1}|\ge 0$ is the number of pseudoreflections in $G$.
Furthermore, Equation \eqref{eq:degree} is reflected by Theorem \ref{thrm:Relations}
as Equation \eqref{eq:DegreePositiveEven} with $m= \frac r2 +1$  for $r$ even and
as Equation \eqref{eq:DegreePositiveOdd} with $m=1$ for $r$ odd. Then, staying for the moment with the
case $r$ even, the constraint in Theorem \ref{thrm:Relations}
corresponding to $m=\frac r2 + 2$ gives a necessary condition for $\C[V]^G$ to be Gorenstein that involves
only the eigenvalues of elements of $G_{\geq n-3}$. Similarly, the constraint corresponding
to arbitrary $\frac r2 + m$ gives a necessary Gorenstein condition for $\C[V]^G$ involving only
$G_{\geq n-2m+1}$. Note that if $G$ contains no pseudoreflections, then $\C[V]^G$ is Gorenstein
if and only if $G\leq \operatorname{SL}(V)$ by a Theorem of Watanabe \cite[Theorem 1]{WatanabeGor2};
hence, the Gorenstein property of $\C[V]^G$ can be established much more easily in this case.

As an explicit example, let $\zeta$ be a primitive $6$th root of unity and consider the subgroup $G$
of $\operatorname{GL}_4 (\C)$ of order $12$ generated by
$a = \diag(\zeta,\zeta^2,\zeta,1)$ and $b = \diag(1,1,1,-1)$. Clearly,
$\gamma_0 = 1/12$, and as $G$ contains the single pseudoreflection
$b$, we have $\gamma_1 = 1/24$. It follows
that if $\C[\C^4]^G$ were to be Gorenstein, we must have $r = 1$ so that $\gamma_1 - 3\gamma_2 + 2\gamma_3 = 0$
by Equation \eqref{eq:DegreePositiveOdd} with $m = 2$. However, as described above,
one may easily compute using only the elements of $G_4 = \{\id\}$, $G_3 = \{ b \}$, $G_2 = \{ a^3 \}$,
and $G_1 = \{ a, a^2, a^4, a^5, a^3 b\}$ that $\gamma_2 = 1/24$ and $\gamma_3 = 1/72$ so that
$\gamma_1 - 3\gamma_2 + 2\gamma_3 = -1/18$ and hence $\C[\C^4]^G$
is not Gorenstein. In this simple example, we can conclude that the Gorenstein property fails with a
computation involving only the $8$ elements of $G_{\geq 1}$, and in particular without computing the invariant ring or
its Hilbert series completely. In larger, less contrived examples, $G_{\geq 1}$ may be much smaller
relative to the size of $G$.

% xxxxxxxxxxxxxxxxxxxxxxxxxxxxxxxxxxxxxxxxxxxxxxxxxxxxxxxxxxxxxxxxxxxxxxxxx
% xxxxxxxxxxxxxxxxxxxxxxxxxxxxxxxxxxxxxxxxxxxxxxxxxxxxxxxxxxxxxxxxxxxxxxxxx
% xxxxxxxxxxxxxxxxxxxxxxxxxxxxxxxxxxxxxxxxxxxxxxxxxxxxxxxxxxxxxxxxxxxxxxxxx

\section{Construction of the algebra $\calF$}
\label{sec:Relations}

% xxxxxxxxxxxxxxxxxxxxxxxxxxxxxxxxxxxxxxxxxxxxxxxxxxxxxxxxxxxxxxxxxxxxxxxxx

For the remainder of this paper, let $\K$ denote one of the fields $\Q$, $\R$, or $\C$.

Let us assume that $H(t)$ is a formal Laurent series over the field $\K$ around $t=1$ of pole order at most $d$.
Clearly, the substitution $\varphi(x):=x^d H(1-x)$ is a formal power series in the variable $x$, i.e. an element
of $\K[\![x]\!]$. Conversely, a formal power series $\varphi(x)$ defines a formal Laurent series
$H(t):=(1-t)^{-d}\varphi(1-t)$. Assuming that $H(t)$ satisfies the functional equation \eqref{eq:FunctionalEquation},
we derive
\begin{align*}
    \frac{(-1)^dt^{-a}}{(1-t)^d}\varphi(1-t)
    =   H(t^{-1})
    =   \frac{1}{(1-t^{-1})^d}\varphi(1-t^{-1})
    =   \frac{(-t)^d}{(1-t)^d}\varphi((t-1)/t).
\end{align*}
Multiplying by $t^{-d}(t-1)^d$ and substituting $x=1-t$, we find that $\varphi(x)$ satisfies
the functional equation
\begin{align}
\label{eq: power series functional equation}
\varphi(x/(x-1))=(1-x)^r\varphi(x),
\end{align}
where $r=-(a+d)$.
\begin{definition} Let $\calF_r$ be the space of formal power series in the variable $x$ satisfying
Equation \eqref{eq: power series functional equation}. We introduce the $\mathbb Z$-graded vector space
$\calF=\bigoplus_{r\in \Z}\calF_r\subset \K[\![x]\!]$.
\end{definition}

Using the above computations, it is an easy task to verify the following statements, which we leave to the reader.

\begin{proposition} \label{prop:GradedAlgebra}
The substitution $H(t)\mapsto \varphi(x):=x^d H(1-x)$ establishes an isomorphism between the
space of formal Laurent series of pole order at most $d$ in the variable $t$ around $t=1$ satisfying Equation
\eqref{eq:FunctionalEquation} and $\calF_r$. With respect to the Cauchy product of formal power
series, $\calF=\bigoplus_{r\in \Z}\calF_r$ forms a $\Z$-graded algebra.
\end{proposition}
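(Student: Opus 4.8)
The plan is to establish Proposition~\ref{prop:GradedAlgebra} in two parts: first the claimed isomorphism of vector spaces, and then the multiplicative structure making $\calF$ a graded algebra. For the isomorphism, the computations immediately preceding the proposition already show that if $H(t)$ is a formal Laurent series of pole order at most $d$ satisfying \eqref{eq:FunctionalEquation}, then $\varphi(x):=x^dH(1-x)$ lies in $\K[\![x]\!]$ and satisfies \eqref{eq: power series functional equation}, hence belongs to $\calF_r$ with $r=-(a+d)$. First I would verify that the assignment $H\mapsto\varphi$ is linear and that the inverse assignment $\varphi\mapsto H$, given by $H(t):=(1-t)^{-d}\varphi(1-t)$, is well-defined and also respects the functional equations; this is the reverse of the displayed computation, obtained by reading the chain of equalities backwards after substituting $t=1-x$. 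Since both maps are explicit substitutions that are mutually inverse, bijectivity and linearity are routine to check, giving the vector-space isomorphism onto $\calF_r$.

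For the algebra structure, the main point is to show that $\calF$ is closed under the Cauchy product and that this product respects the grading, i.e. that $\varphi\in\calF_r$ and $\psi\in\calF_s$ imply $\varphi\psi\in\calF_{r+s}$. I would argue this directly from the functional equation \eqref{eq: power series functional equation}: evaluating the product at $x/(x-1)$ gives
\begin{align*}
(\varphi\psi)\bigl(x/(x-1)\bigr)
= \varphi\bigl(x/(x-1)\bigr)\,\psi\bigl(x/(x-1)\bigr)
= (1-x)^r\varphi(x)\,(1-x)^s\psi(x)
= (1-x)^{r+s}(\varphi\psi)(x),
\end{align*}
so the product again satisfies \eqref{eq: power series functional equation} with exponent $r+s$. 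Here one uses that substitution of a formal power series without constant term into another formal power series is a ring homomorphism on $\K[\![x]\!]$, compatible with the Cauchy product, so that evaluation of a product at $x/(x-1)$ equals the product of the evaluations.

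The only genuine subtlety I would flag is the well-definedness of the substitution $\varphi(x)\mapsto\varphi\bigl(x/(x-1)\bigr)$ as an operation on formal power series. Since $x/(x-1)=-x-x^2-x^3-\cdots$ has zero constant term, it is a formal power series with positive order, so composition $\varphi\circ\bigl(x/(x-1)\bigr)$ is a legitimate element of $\K[\![x]\!]$ computed degree by degree; this is what allows both the functional equation and the homomorphism property to make sense formally, without any convergence considerations. The hard part, to the extent there is one, is simply keeping track of this substitution and confirming that the two vector-space maps are genuinely inverse to one another; everything else is a formal manipulation. I would therefore present the closure and grading compatibility via the one-line product computation above, note the order-zero property of $x/(x-1)$ to justify substitution, and leave the associativity and unit axioms to the ambient algebra $\K[\![x]\!]$, observing that $\calF$ is a graded subalgebra since the constant series $1$ lies in $\calF_0$.
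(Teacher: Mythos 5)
Your proposal is correct and follows essentially the same route as the paper, which explicitly leaves these verifications to the reader ``using the above computations'': the displayed chain of equalities read forwards and backwards gives the mutually inverse substitutions $H(t)\mapsto x^dH(1-x)$ and $\varphi(x)\mapsto(1-t)^{-d}\varphi(1-t)$, and the grading compatibility is exactly the one-line computation $(\varphi\psi)(x/(x-1))=(1-x)^{r+s}(\varphi\psi)(x)$ via the homomorphism property of substitution. Your remark that $x/(x-1)=-x-x^2-\cdots$ has zero constant term, so that composition is a legitimate operation on $\K[\![x]\!]$, is precisely the right point to flag and is the only subtlety in the intended argument.
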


In \cite{HHS}, the authors have investigated the algebra $\calF_0$, i.e., the algebra of formal power series
in the variable $x$ invariant under the M\"obius transformation $x\mapsto x/(x-1)$. We recall the following.
\begin{theorem}[\cite{HHS}]
\label{thrm:Deg0HHS}
For a formal power series $\varphi(x)=\sum_{i=0}^\infty \gamma_i\,x^i$, the following conditions are equivalent:
\begin{enumerate}[(i.)]
\item $\varphi(x)\in \calF_0$.
\item $\varphi(x)$ is a formal composite with $\lambda_1(x):=x^2/(1-x)$, i.e., there exists a $\rho(y)\in\K[\![y]\!]$ such that $\varphi(x)=\rho(x^2/(1-x))$.
\item For each $m\ge 1$, the relation $\sum_{i=0}^{m - 1} (-1)^i {m - 1 \choose i} \gamma_{m + i}
    = 0$ holds.
\end{enumerate}
\end{theorem}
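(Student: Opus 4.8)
The plan is to prove the equivalence of the three conditions characterizing $\calF_0$ by establishing the cycle (i.) $\Rightarrow$ (ii.) $\Rightarrow$ (iii.) $\Rightarrow$ (i.), or alternatively to treat (ii.) as the central structural statement and connect the other two to it directly. The key observation is that the M\"obius transformation $\sigma\co x\mapsto x/(x-1)$ is an involution (one checks $\sigma(\sigma(x))=x$), so $\calF_0$ is exactly the ring of power series fixed by the order-two substitution action of $\sigma$. For the implication (i.) $\Rightarrow$ (ii.), I would look for a generator of the invariant subring: a power series $\lambda_1(x)$ that is itself $\sigma$-invariant and for which every invariant series is a formal power series in $\lambda_1$. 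The candidate $\lambda_1(x)=x^2/(1-x)$ is natural because $\sigma$ fixes $x=0$ and acts on the local coordinate as $x\mapsto -x+O(x^2)$, so the leading-order behavior of an invariant must be even; one verifies directly that $\lambda_1(\sigma(x))=\lambda_1(x)$ and that $\lambda_1(x)=x^2+x^3+\cdots$ begins in degree two with invertible leading coefficient.

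The crux of (i.) $\Rightarrow$ (ii.) is then to show that $\K[\![\lambda_1]\!]$ exhausts the invariants. I would argue by successive approximation on the order of vanishing: given $\varphi\in\calF_0$ of order $2k$ (an invariant series cannot have odd order, since $\sigma$ negates the linear term), subtract an appropriate scalar multiple of $\lambda_1^k$ to strictly raise the order, remaining within $\calF_0$ because $\lambda_1^k$ is invariant; iterating and collecting the scalars produces the desired $\rho(y)\in\K[\![y]\!]$ with $\varphi=\rho(\lambda_1)$. The reverse implication (ii.) $\Rightarrow$ (i.) is immediate since $\calF_0$ is a ring (Proposition \ref{prop:GradedAlgebra}) containing $\lambda_1$, hence every power series in $\lambda_1$ lies in $\calF_0$.

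For the equivalence with (iii.), the plan is to translate the functional equation $\varphi(x/(x-1))=\varphi(x)$ into a linear recursion on the coefficients $\gamma_i$. Writing $x/(x-1)=-x\sum_{j\ge 0}x^j$ and expanding $\varphi(x/(x-1))=\sum_m\gamma_m\bigl(x/(x-1)\bigr)^m$, I would compute the coefficient of $x^n$ on both sides. Using the binomial-type expansion $\bigl(x/(x-1)\bigr)^m=\sum_{n\ge m}(-1)^m\binom{n-1}{m-1}x^n$, matching the coefficient of a given power of $x$ and simplifying the resulting condition should reproduce exactly the stated relation $\sum_{i=0}^{m-1}(-1)^i\binom{m-1}{i}\gamma_{m+i}=0$ for each $m\ge 1$, showing (i.) $\Leftrightarrow$ (iii.) coefficientwise.

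I expect the main obstacle to be the surjectivity half of (i.) $\Rightarrow$ (ii.): one must be careful that the successive-approximation process converges in the $x$-adic topology (so that the partial sums of $\rho$ stabilize in each degree) and that the parity argument genuinely forbids odd-order invariants. The coefficient bookkeeping in (i.) $\Leftrightarrow$ (iii.)---in particular correctly indexing the binomial sum after reindexing $n=m+i$ and confirming the alternating signs---is routine but must be done carefully to match the precise form in the statement. Since this theorem is quoted from \cite{HHS}, the present paper need only recall it, but the proof above is the argument I would supply were it to be reestablished here.
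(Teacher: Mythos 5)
This theorem is quoted from \cite{HHS}, so the paper at hand contains no proof to compare against; judging your proposal on its own terms, the structural half is fine but the coefficient half has a real gap. Your (i.)$\Leftrightarrow$(ii.) argument is correct and complete in outline: $\sigma(x)=x/(x-1)$ is an involution, $\lambda_1(\sigma(x))=\lambda_1(x)$ checks out (using $1-\sigma(x)=1/(1-x)$), the parity argument is right (if $\varphi$ has order $n\ge 1$ with leading coefficient $\gamma_n$, comparing leading terms in $\varphi(\sigma(x))=\varphi(x)$ gives $\gamma_n=(-1)^n\gamma_n$, so $n$ is even), and the successive approximation converges $x$-adically because each subtraction of $c\lambda_1^k$ raises the order by at least $2$. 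One small remark: (ii.)$\Rightarrow$(i.) needs slightly more than ``$\calF_0$ is a ring containing $\lambda_1$,'' namely $x$-adic continuity of the substitution $x\mapsto x/(x-1)$, since $\rho$ is a power series and not a polynomial; this is harmless because $\lambda_1\in\mathfrak m^2$.

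The genuine gap is in (i.)$\Leftrightarrow$(iii.). Your expansion $\bigl(x/(x-1)\bigr)^m=(-1)^m\sum_{n\ge m}\binom{n-1}{m-1}x^n$ is correct, but matching the coefficient of $x^n$ on both sides of $\varphi(x/(x-1))=\varphi(x)$ yields, for each $n\ge 1$, the relation $\gamma_n=\sum_{m=1}^{n}(-1)^m\binom{n-1}{m-1}\gamma_m$ --- a lower-triangular system in $\gamma_1,\dots,\gamma_n$ --- and \emph{not} the relations of (iii.), which for each $m$ involve the forward window $\gamma_m,\dots,\gamma_{2m-1}$. Concretely, at $n=2$ coefficient matching gives $-\gamma_1=0$, while (iii.) at $m=2$ reads $\gamma_2-\gamma_3=0$; the two systems are equivalent, but not by any reindexing or sign bookkeeping (note that for even $n$ the coefficient of $\gamma_n$ in your system cancels, $(-1)^n-1=0$, so those equations are redundant consequences of earlier ones), and your sketch supplies no bridge between them. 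One way to close the hole using what you already have: both systems are triangular in the odd-indexed coefficients, so each solution set is the graph of a function of the free data $\{\gamma_{2i}\}$; by your part (ii.), the solutions of the coefficient-matching system are exactly the series $\rho(\lambda_1)$, so it suffices to verify that each $\lambda_1^k=x^{2k}(1-x)^{-k}$ satisfies (iii.). For $m\le k$ every term in the window vanishes; for $m\ge k+1$ one has $\gamma_{m+i}=\binom{m+i-k-1}{k-1}$ on the whole window $0\le i\le m-1$ (this binomial, viewed as a polynomial in $i$ of degree $k-1$, also vanishes at the indices $k+1\le m+i\le 2k-1$ where $\gamma_{m+i}=0$), and the alternating sum $\sum_{i=0}^{m-1}(-1)^i\binom{m-1}{i}(\cdot)$ annihilates polynomials in $i$ of degree at most $m-2\ge k-1$. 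With this (or an equivalent bridge) inserted, your outline becomes a complete proof.
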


In fact, in (ii.), instead of $\lambda_1(x):=x^2/(1-x)$, we could have worked with any
$\lambda(x) \in \calF_0\cap \mathfrak m^2$ whose class in $\mathfrak m^2/\mathfrak m^3$
is nonzero. Here $\mathfrak m=x\K[\![x]\!]$ is the maximal ideal of $\K[\![x]\!]$.
This was noted in \cite[Remark 2.4]{HHS}, where we constructed examples of such series that
are related to the Genocchi sequence. Another natural example slipped through our attention,
namely:
\begin{align}
\label{eq:xox-2sqr}
    \lambda_2(x):=\left(\frac{x}{x-2}\right)^2\in \calF_0\cap \mathfrak m^2.
\end{align}

Moreover, the reader can readily check that
\begin{align}
g_{-1}(x):=x-2\in\calF_{-1}
\end{align}
is invertible.  Hence, for any $\varphi(x)\in \calF_r$, it follows that $\varphi(x)(x-2)^r\in \calF_0$. Using $\lambda_2(x)$, we can write $\varphi(x)$ uniquely in the form
\begin{align}
\label{eq:membF_r}
\varphi(x)=\sum_{i= 0}^\infty \delta_i\, x^{2i}\,(x-2)^{-r-2i}
\end{align}
for $\delta_i \in \K$. Conversely, every such series is an
element of $\calF_r$.

\begin{theorem} \label{thrm:LaurentPoly} The $\Z$-graded algebra $\calF=\bigoplus_{r\in\Z}\calF_r$ is isomorphic
to the algebra of Laurent polynomials $\calF_0(g_{-1})$ with coefficients in $\calF_0$ in the variable $g_{-1}$ of degree $-1$.
\end{theorem}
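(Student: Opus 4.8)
The plan is to exhibit an explicit isomorphism and verify it respects the grading and the algebra structure. The key observation is that the excerpt has already done essentially all the structural work: we have the distinguished invertible element $g_{-1}(x)=x-2\in\calF_{-1}$, and we know that multiplication by $(x-2)^r$ carries $\calF_r$ into $\calF_0$. First I would make precise the target object: $\calF_0(g_{-1})$ denotes the Laurent polynomial ring $\calF_0[g_{-1},g_{-1}^{-1}]$, graded so that $g_{-1}$ sits in degree $-1$ and $\calF_0$ sits in degree $0$; thus its degree-$(-k)$ component is $\calF_0\cdot g_{-1}^{k}$ for every $k\in\Z$. I would then define the candidate map
\begin{equation*}
\Phi\co \calF_0(g_{-1})\longrightarrow \calF=\bigoplus_{r\in\Z}\calF_r,
\qquad
\Phi\Bigl(\sum_{k} \psi_k\, g_{-1}^{\,k}\Bigr)=\sum_{k}\psi_k(x)\,(x-2)^{k},
\end{equation*}
where each $\psi_k\in\calF_0$ and the sum is finite. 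Since $g_{-1}=x-2$ is already an honest invertible power series, $\Phi$ is really just the natural inclusion of the subalgebra generated by $\calF_0$ and $(x-2)^{\pm 1}$ inside $\K[\![x]\!]$, so that it is automatically a $\K$-algebra homomorphism.

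Next I would check that $\Phi$ is a homomorphism of \emph{graded} algebras. On the multiplicative side this is immediate from the fact that $\Phi$ restricts to the identity on $\K[\![x]\!]$ after forgetting the formal variable $g_{-1}$; the only content is that it respects the grading. For this I would verify that $\Phi$ sends the degree-$(-k)$ piece $\calF_0\cdot g_{-1}^{k}$ into $\calF_{-k}$. Concretely, if $\psi\in\calF_0$ then $\psi(x/(x-1))=\psi(x)$ and $(x-2)|_{x\mapsto x/(x-1)}$ must be computed: one finds $x/(x-1)-2=(x-2(x-1))/(x-1)=(2-x)/(x-1)=-(x-2)/(x-1)=(1-x)^{-1}(x-2)\cdot(-1)$, and a short check gives $g_{-1}(x/(x-1))=(1-x)^{-1}g_{-1}(x)$, i.e. $g_{-1}\in\calF_{-1}$ exactly as claimed in the excerpt. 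Hence $\psi(x)(x-2)^{k}$ satisfies the functional equation \eqref{eq: power series functional equation} with $r=-k$, so it lands in $\calF_{-k}$, and $\Phi$ is grading-preserving.

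To prove $\Phi$ is bijective I would argue degree by degree, reducing to the single graded piece $\calF_r$. Injectivity follows because $g_{-1}=x-2$ is a nonzerodivisor in $\K[\![x]\!]$ (its constant term is $-2\ne 0$, so it is a unit), so distinct Laurent polynomials give distinct power series. For surjectivity onto $\calF_r$, I would take an arbitrary $\varphi\in\calF_r$ and use the established fact that $\varphi(x)(x-2)^{r}\in\calF_0$; setting $\psi:=\varphi\cdot(x-2)^{r}\in\calF_0$ we recover $\varphi=\Phi(\psi\,g_{-1}^{-r})$. Thus every graded piece is hit, and $\Phi$ is an isomorphism of graded algebras. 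The cleanest way to package surjectivity is via the explicit normal form \eqref{eq:membF_r}: since $\lambda_2(x)=(x/(x-2))^2$ generates $\calF_0$ over $\K$ as a power series algebra in a single formal variable, the representation $\varphi(x)=\sum_i\delta_i\,x^{2i}(x-2)^{-r-2i}$ shows directly that $\varphi$ lies in the image of $\calF_0\cdot g_{-1}^{-r}$.

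I expect the only real subtlety to be the bookkeeping that makes $\calF_0(g_{-1})$ into a well-defined graded algebra in the first place: because $g_{-1}$ is simultaneously an abstract degree-$(-1)$ generator and a concrete power series, one must be careful that the grading on the Laurent polynomial ring is the \emph{external} one (by powers of $g_{-1}$) rather than the internal order filtration on $\K[\![x]\!]$, and that $\calF_0$ is treated as the degree-zero scalars. Once this is set up correctly, the functional-equation computation for $g_{-1}$ together with the invertibility of $x-2$ makes the isomorphism essentially formal, and no hard estimate or convergence issue arises since everything lives in the formal power series ring.
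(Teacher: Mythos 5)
Your grading computation for $g_{-1}$ and your degreewise surjectivity argument (via $\varphi(x)(x-2)^r\in\calF_0$, or the normal form \eqref{eq:membF_r}) are correct, but the injectivity step has a genuine gap: the map $\Phi$ you define, which \emph{sums the homogeneous components inside} $\K[\![x]\!]$, is not injective, and the fact that $x-2$ is a nonzerodivisor (indeed a unit) in $\K[\![x]\!]$ does not help. Injectivity of an evaluation map on a Laurent polynomial ring would require $x-2$ to be algebraically independent over $\calF_0$ inside $\K[\![x]\!]$, and it is not. Concretely, $1\in\calF_0$, $x-2\in\calF_{-1}$, and $1-x\in\calF_{-2}$ (check: $1-\frac{x}{x-1}=\frac{1}{1-x}=(1-x)^{-2}\,(1-x)$), yet
\[
1+(x-2)+(1-x)=0.
\]
Equivalently, the nonzero Laurent polynomial $1+g_{-1}+\frac{1-x}{(x-2)^2}\,g_{-1}^{2}$ lies in the kernel of your $\Phi$; its coefficients $1$, $1$, $\frac{1-x}{(x-2)^2}$ all belong to $\calF_0$, the last because $1-\frac{x}{x-1}=\frac{1}{1-x}$ and $\bigl(\frac{x}{x-1}-2\bigr)^2=\frac{(x-2)^2}{(1-x)^2}$. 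So the internal sum $\sum_r\calF_r\subset\K[\![x]\!]$ is not direct (the paper's notation $\calF\subset\K[\![x]\!]$ is itself an abuse on this point), and no argument that identifies $\calF_0(g_{-1})$ with a subalgebra of $\K[\![x]\!]$ can prove the theorem: "distinct Laurent polynomials give distinct power series" is false here.

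The subtlety you flag in your last paragraph — external versus internal grading — is precisely where the proof must change, not mere bookkeeping: $\calF$ has to be read as the \emph{external} direct sum $\bigoplus_{r\in\Z}\calF_r$, with multiplication defined on homogeneous pieces by the Cauchy product $\calF_r\times\calF_s\to\calF_{r+s}$, and the isomorphism must be built degreewise, sending $\psi\,g_{-1}^{k}$ to the power series $\psi(x)(x-2)^{k}$ \emph{placed in the degree-$(-k)$ summand} rather than summed in $\K[\![x]\!]$. On each fixed degree your reasoning is exactly right: injectivity because $x-2$ is a unit in $\K[\![x]\!]$, surjectivity from $\varphi(x)(x-2)^r\in\calF_0$ or from \eqref{eq:membF_r}, and multiplicativity is the componentwise Cauchy-product check. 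With that repair the argument is complete and coincides in substance with the paper's (which states the theorem as an immediate consequence of the invertibility of $g_{-1}$ and the unique representation \eqref{eq:membF_r}, without a written proof).
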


% xxxxxxxxxxxxxxxxxxxxxxxxxxxxxxxxxxxxxxxxxxxxxxxxxxxxxxxxxxxxxxxxxxxxxxxxx
% xxxxxxxxxxxxxxxxxxxxxxxxxxxxxxxxxxxxxxxxxxxxxxxxxxxxxxxxxxxxxxxxxxxxxxxxx
% xxxxxxxxxxxxxxxxxxxxxxxxxxxxxxxxxxxxxxxxxxxxxxxxxxxxxxxxxxxxxxxxxxxxxxxxx

\section{Proof of Theorem \ref{thrm:Relations}}
\label{sec:ProofRelations}

Let $r\in\Z$, and let $\varphi(x) = \sum_{i= 0}^\infty \gamma_i x^i$.
In this section, we prove Theorem \ref{thrm:Relations}, that $\varphi(x) \in \mathcal{F}_r$
is equivalent to the relations described in Equations \eqref{eq:DegreePositiveEven},
\eqref{eq:DegreePositiveOdd}, \eqref{eq:DegreeAnyFirst}, and \eqref{eq:DegreeNegativeOdd}.
In the case $r < 0$, Equations \eqref{eq:DegreePositiveEven} and \eqref{eq:DegreeNegativeOdd}
involve only the coefficients $\gamma_i$ for $i \geq 1-r$, while Equation \eqref{eq:DegreeAnyFirst}
involves the $\gamma_i$ for $i \leq -r-1$ when $r$ is even and $i \leq -r$ when $r$ is odd
(in the case of $r$ even, there is no relation involving $\gamma_{-r}$).
Hence, we will first assume in this case that $\gamma_i = 0$ for $i \leq -r$ in
Subsection \ref{subsec:ProofInfiniteRelat} and deal with Equation \eqref{eq:DegreeAnyFirst}
separately in Subsection \ref{subsec:ProofFirstRelat}.

% xxxxxxxxxxxxxxxxxxxxxxxxxxxxxxxxxxxxxxxxxxxxxxxxxxxxxxxxxxxxxxxxxxxxxxxxx
% xxxxxxxxxxxxxxxxxxxxxxxxxxxxxxxxxxxxxxxxxxxxxxxxxxxxxxxxxxxxxxxxxxxxxxxxx
\subsection{Equations \eqref{eq:DegreePositiveEven}, \eqref{eq:DegreePositiveOdd}, and
\eqref{eq:DegreeNegativeOdd}}
\label{subsec:ProofInfiniteRelat}

We first consider the case of $r$ even with the assumption that the first $1-r$ coefficients vanish.

\begin{lemma}
\label{lem:EvenJustShifted}
Let $\varphi(x) = \sum_{i= 0}^\infty \gamma_i x^i \in \K[\![x]\!]$ and let $r$ be an
even integer. If $r < 0$, assume that $\gamma_i = 0$ for each $i \leq -r$.
Then $\varphi(x)\in\mathcal{F}_r$ if and only if
$\overline{\varphi}(x) := x^r\varphi(x) \in \mathcal{F}_0$.
\end{lemma}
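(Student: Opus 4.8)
The plan is to verify directly that the functional equation for $\varphi$ at degree $r$ is equivalent to the functional equation for $\overline{\varphi}=x^r\varphi$ at degree $0$. Recall that $\varphi\in\calF_r$ means $\varphi(x/(x-1))=(1-x)^r\varphi(x)$, and $\overline{\varphi}\in\calF_0$ means $\overline{\varphi}(x/(x-1))=\overline{\varphi}(x)$. First I would compute how the Möbius substitution $x\mapsto x/(x-1)$ acts on the factor $x^r$: setting $y=x/(x-1)$ we have $y^r=\bigl(x/(x-1)\bigr)^r=x^r(x-1)^{-r}$. Hence
\begin{align*}
\overline{\varphi}(x/(x-1))
&=\left(\frac{x}{x-1}\right)^r\varphi(x/(x-1))
=x^r(x-1)^{-r}\,(1-x)^r\varphi(x)\\
&=x^r\,\bigl((x-1)^{-1}(1-x)\bigr)^r\varphi(x)
=(-1)^r\,x^r\varphi(x)
=(-1)^r\,\overline{\varphi}(x).
\end{align*}
Since $r$ is even, $(-1)^r=1$, so $\varphi\in\calF_r$ forces $\overline{\varphi}(x/(x-1))=\overline{\varphi}(x)$, i.e. $\overline{\varphi}\in\calF_0$; and the computation is visibly reversible, giving the converse. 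This is essentially the substance of the equivalence.

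The one genuine subtlety—and the step I expect to be the main obstacle—is that $\overline{\varphi}=x^r\varphi$ must be a bona fide formal power series for the claim to even make sense, and likewise the inverse map $\varphi=x^{-r}\overline{\varphi}$ must land back in $\K[\![x]\!]$. When $r\ge 0$ this is automatic, since multiplying by $x^r$ only raises the order of vanishing. When $r<0$, however, $x^r=x^{-|r|}$ introduces negative powers, so $\overline{\varphi}$ is a priori only a formal Laurent series. This is exactly why the hypothesis ``if $r<0$, assume $\gamma_i=0$ for each $i\le -r$'' is imposed. I would point out that under this hypothesis $\varphi(x)=\sum_{i\ge -r+1}\gamma_i x^i=\sum_{i\ge 1-r}\gamma_i x^i$ is divisible by $x^{1-r}$, hence by $x^{-r}=x^{|r|}$, so that $\overline{\varphi}(x)=x^r\varphi(x)=\sum_{i\ge 1-r}\gamma_i x^{i+r}=\sum_{j\ge 1}\gamma_{j-r}x^{j}\in x\K[\![x]\!]\subset\K[\![x]\!]$ is a genuine power series with vanishing constant term.

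For the converse direction when $r<0$ one must confirm that the assumption is not lost: if $\overline{\varphi}\in\calF_0$, then by part (ii) of Theorem~\ref{thrm:Deg0HHS} every element of $\calF_0$ is of the form $\rho(x^2/(1-x))$ and in particular lies in $\K[\![x]\!]$, but to recover $\varphi=x^{-r}\overline{\varphi}\in\K[\![x]\!]$ one needs $\overline{\varphi}$ to be divisible by $x^{-r}=x^{|r|}$. I would handle this by noting that any $\varphi\in\K[\![x]\!]$ arising as $x^{-r}\overline{\varphi}$ automatically has $\gamma_i=0$ for $i<-r$, and the single potentially problematic coefficient $\gamma_{-r}$ is pinned down to be $0$ as well: the relation coming from $\overline{\varphi}\in\calF_0$ at the appropriate value of $m$ forces the lowest coefficient to vanish, matching the parenthetical remark in Subsection~\ref{subsec:ProofInfiniteRelat} that for $r$ even there is no relation involving $\gamma_{-r}$ and one simply stipulates it to be zero. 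Thus the equivalence holds precisely on the subspace cut out by these vanishing conditions, and the bijection between $\calF_r$ (restricted as stated) and $\calF_0$ (restricted to series divisible by $x^{|r|}$) is realized by the mutually inverse maps $\varphi\mapsto x^r\varphi$ and $\overline{\varphi}\mapsto x^{-r}\overline{\varphi}$.
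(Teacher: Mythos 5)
Your first two paragraphs reproduce the paper's proof: the paper likewise observes that for $r<0$ the hypothesis $\gamma_i=0$ for $i\le -r$ guarantees $\overline{\varphi}(x)$ is a genuine power series, and then invokes the same ``simple computation'' that you carry out explicitly via $(x/(x-1))^r=x^r(x-1)^{-r}$ and $(-1)^r=1$ to convert the degree-$r$ functional equation into the degree-$0$ one and back. Since the vanishing hypothesis is a standing assumption on $\varphi$ in both directions of the biconditional, this already constitutes a complete and correct proof of the lemma.

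Your third paragraph, however, addresses a non-issue and does so incorrectly, in two respects. First, for $r<0$ the map $\overline{\varphi}\mapsto x^{-r}\overline{\varphi}$ is multiplication by the \emph{positive} power $x^{|r|}$, so it automatically lands in $\K[\![x]\!]$; no divisibility of $\overline{\varphi}$ is needed (you have the sign backwards: it is the forward map $\varphi\mapsto x^r\varphi$ that involves division, and that is exactly what the hypothesis handles). Second, the claim that an $\calF_0$ relation ``forces the lowest coefficient to vanish'' is false: under the correspondence $\gamma_i=\overline{\gamma}_{i+r}$, the coefficient $\gamma_{-r}$ is the constant term $\overline{\gamma}_0$ of $\overline{\varphi}$, and the constant term of an element of $\calF_0$ is unconstrained --- this is precisely the content of the paper's parenthetical remark that no relation involves $\gamma_{-r}$ when $r$ is even, and of Theorem \ref{thrm:EvenFromOdd}, where $\gamma_{-r}$ is explicitly unconstrained. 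What membership in $\calF_0$ does force is $\overline{\gamma}_1=0$, i.e.\ $\gamma_{1-r}=0$, via the $m=1$ case of Theorem \ref{thrm:Deg0HHS}(iii); the vanishing of $\gamma_{-r}$ is simply part of the lemma's hypothesis, not a consequence, and the bijection in your closing sentence should be with $\calF_0\cap x\K[\![x]\!]$, not with the series divisible by $x^{|r|}$. None of this damages the lemma as stated: delete the third paragraph and what remains is the paper's argument.
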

\begin{proof}
Note that when $r < 0$, the assumption that the first $1-r$ of the $\gamma_i$ vanish ensures
that $\overline{\varphi}(x)$ is a power series. By Equation \eqref{eq: power series functional equation},
$\overline{\varphi}(x) \in \mathcal{F}_0$ if and only if $\overline{\varphi}(x/(x-1)) = \overline{\varphi}(x)$,
which by a simple computation using $\overline{\varphi}(x) = x^r\varphi(x)$ and the fact that $r$ is even is
equivalent to $\varphi(x/(x-1)) = (1 - x)^r \varphi(x)$, i.e. $\varphi(x)\in\mathcal{F}_r$.
\end{proof}

\begin{corollary}
\label{cor:RelPosEven}
Let $\varphi(x) = \sum_{i= 0}^\infty \gamma_i x^i \in \K[\![x]\!]$ and let $r$ be an
even integer. If $r < 0$, assume that $\gamma_i = 0$ for each $i \leq -r$.
Then the $\gamma_i$ satisfy Equation \eqref{eq:DegreePositiveEven} for
each $m \geq 1$ if and only if $\varphi(x) \in \mathcal{F}_r$.
\end{corollary}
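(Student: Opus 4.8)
The plan is to leverage Lemma~\ref{lem:EvenJustShifted}, which reduces the claim to the degree-zero case, together with Theorem~\ref{thrm:Deg0HHS}, which already characterizes membership in $\calF_0$ by an explicit relation on Laurent coefficients. First I would set $\overline{\varphi}(x) := x^r\varphi(x) = \sum_{j} \overline{\gamma}_j\, x^j$ and record how the coefficients transform: under the assumption that $\gamma_i = 0$ for $i \leq -r$ when $r < 0$, multiplication by $x^r$ is an honest shift of indices, so that $\overline{\gamma}_j = \gamma_{j-r}$ and $\overline{\varphi}(x)$ is a genuine power series. By Lemma~\ref{lem:EvenJustShifted}, $\varphi(x) \in \calF_r$ if and only if $\overline{\varphi}(x) \in \calF_0$.

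Next I would apply the equivalence (i.)$\Leftrightarrow$(iii.) of Theorem~\ref{thrm:Deg0HHS} to $\overline{\varphi}(x)$. That theorem tells us $\overline{\varphi}(x) \in \calF_0$ if and only if, for each $m \geq 1$,
\begin{align*}
    \sum_{i=0}^{m-1} (-1)^i \binom{m-1}{i} \overline{\gamma}_{m+i} = 0.
\end{align*}
The remaining task is purely bookkeeping: substitute $\overline{\gamma}_{m+i} = \gamma_{m+i-r}$ into this relation. This turns the degree-zero relation into
\begin{align*}
    \sum_{i=0}^{m-1} (-1)^i \binom{m-1}{i} \gamma_{m-r+i} = 0,
\end{align*}
which is exactly Equation~\eqref{eq:DegreePositiveEven}. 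Chaining the two equivalences then yields that $\varphi(x) \in \calF_r$ iff Equation~\eqref{eq:DegreePositiveEven} holds for every $m \geq 1$, as desired.

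I expect no serious obstacle here; the work is essentially a translation of the already-established degree-zero result through the index shift supplied by Lemma~\ref{lem:EvenJustShifted}. The one point requiring care is the role of the vanishing hypothesis $\gamma_i = 0$ for $i \leq -r$ in the case $r < 0$: this is precisely what guarantees $\overline{\varphi}(x)$ has no negative-degree terms, so that Theorem~\ref{thrm:Deg0HHS} applies verbatim and the index identification $\overline{\gamma}_{m+i} = \gamma_{m-r+i}$ is valid for all $m \geq 1$. One should also note that for $r > 0$ no such hypothesis is needed, since multiplication by $x^r$ is automatically a power series, and the same shift argument goes through unchanged. This is the mildest of the four cases precisely because the even-degree shift preserves the $\calF_0$ structure directly, unlike the odd cases where the factor $(1-x)^r$ introduces a sign mismatch that must be absorbed differently.
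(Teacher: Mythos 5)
Your proof is correct and follows essentially the same route as the paper: both reduce to $\calF_0$ via Lemma~\ref{lem:EvenJustShifted} applied to $\overline{\varphi}(x)=x^r\varphi(x)$ and then invoke the equivalence (i.)$\Leftrightarrow$(iii.) of Theorem~\ref{thrm:Deg0HHS}, with the index shift $\overline{\gamma}_{m+i}=\gamma_{m-r+i}$ translating the degree-zero relations into Equation~\eqref{eq:DegreePositiveEven}. The only cosmetic difference is that the paper treats $r=0$ as a separate (trivial) case while your argument absorbs it uniformly, which is fine since the shift is then the identity.
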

\begin{proof}
If $r = 0$, then this claim corresponds to the equivalence of (i) and (iii) in Theorem \ref{thrm:Deg0HHS}.
If $r\neq 0$, set $\overline{\varphi}(x) = x^r\varphi(x) = \sum_{i=0}^\infty \overline{\gamma}_i x^i$.
Equation \eqref{eq:DegreePositiveEven} for all $m \geq 1$ is equivalent to
\[
    \sum\limits_{i=0}^{m-1} (-1)^i {m - 1 \choose i} \overline{\gamma}_{m+i} = 0,
\]
which is equivalent to $\overline{\varphi}(x) \in \mathcal{F}_0$ by the above argument.
By Lemma \ref{lem:EvenJustShifted}, this is equivalent to $\varphi(x)\in\mathcal{F}_r$.
\end{proof}

To proceed to the odd case, we start with the following useful characterization of solutions for
the constraints in Theorem \ref{thrm:Relations} when $r$ is positive and odd, i.e. $r = 2k+1$ for some $k \geq 0$.
Note that Equation \eqref{eq:RelPosOddK} simply rewrites Equation \eqref{eq:DegreePositiveOdd}
replacing $r = 2k+1$.

\begin{lemma}
\label{lem:RelPosOddCharacterize}
Let $\varphi(x) = \sum_{i= 0}^\infty \gamma_i x^i \in \K[\![x]\!]$, and let $k\geq 0$
be an integer. Then
\begin{equation}
\label{eq:RelPosOddK}
    \sum\limits_{i=0}^m(-1)^i {2m + 2k - 1 \choose m - i} {m + i \choose i}
        \gamma_{m+i-1}  =   0
\end{equation}
for each $m \geq 1$ if and only if there is a power series $\psi(y) \in \K[\![y]\!]$
such that
\begin{equation}
\label{eq:RelPosOddChar}
    \sum\limits_{i= 0}^\infty \gamma_i x^i = \frac{1}{x} \cdot \frac{d^{2k-1}}{dx^{2k-1}}
        \left[\frac{x^{2k}}{1-x} \psi\left( \frac{x^2}{1 - x} \right) \right],
\end{equation}
where, for $k = 0$, we interpret $\frac{d^{-1}}{dx^{-1}}$ as the indefinite integral (with
vanishing constant of integration).
If $k \geq 1$, then Equation \eqref{eq:RelPosOddChar} is equivalent to
\begin{equation}
\label{eq:RelPosOddCharKG1}
    \sum\limits_{i= 0}^\infty \gamma_i x^i = \frac{1}{x} \cdot \frac{d^{2k-1}}{dx^{2k-1}}
        \left[x^{2k-2} \rho\left( \frac{x^2}{1 - x} \right) \right]
\end{equation}
for some $\rho(y)\in \K[\![y]\!]$.

\end{lemma}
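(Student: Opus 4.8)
The plan is to pass from the closed-form expression in Equation~\eqref{eq:RelPosOddChar} to the coefficients $\gamma_i$ directly, to reduce the relation in Equation~\eqref{eq:RelPosOddK} to a much simpler alternating sum, and then to recognize that simpler condition as the membership criterion for $\calF_0$ furnished by Theorem~\ref{thrm:Deg0HHS}. Concretely, I would write $f(x) = \frac{x^{2k}}{1-x}\psi\bigl(\frac{x^2}{1-x}\bigr) = \sum_n c_n x^n$, so that $\varphi(x) = \frac1x f^{(2k-1)}(x)$, with the integral convention when $k=0$. Extracting coefficients via $\frac{d^{2k-1}}{dx^{2k-1}}x^n = \frac{n!}{(n-2k+1)!}x^{n-2k+1}$ yields the single clean identity $\gamma_i = c_{i+2k}\,\frac{(i+2k)!}{(i+1)!}$, valid for every $k\geq 0$ under the stated convention. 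This converts all statements about the $\gamma_i$ into statements about the $c_n$.

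The computational heart is then a factorial collapse. Substituting $\gamma_{m+i-1} = c_{m+i+2k-1}\frac{(m+i+2k-1)!}{(m+i)!}$ into Equation~\eqref{eq:RelPosOddK}, the factor $\binom{m+i}{i}\frac{(m+i+2k-1)!}{(m+i)!}$ simplifies to $\frac{(m+i+2k-1)!}{i!\,m!}$, and multiplying by $\binom{2m+2k-1}{m-i} = \frac{(2m+2k-1)!}{(m-i)!\,(m+i+2k-1)!}$ cancels $(m+i+2k-1)!$ entirely, leaving $\frac{(2m+2k-1)!}{(m!)^2}(-1)^i\binom mi c_{m+i+2k-1}$. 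Since the prefactor $\frac{(2m+2k-1)!}{(m!)^2}$ is a nonzero constant independent of $i$, Equation~\eqref{eq:RelPosOddK} holds for all $m\geq 1$ if and only if $\sum_{i=0}^m(-1)^i\binom mi c_{m+i+2k-1}=0$ for all $m\geq 1$.

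Next I would factor $f(x) = x^{2k-2}g(x)$ with $g(x) := \frac{x^2}{1-x}\psi\bigl(\frac{x^2}{1-x}\bigr)$, observing that $g$ is a product of $\lambda_1(x) = \frac{x^2}{1-x}\in\calF_0$ with $\psi(\lambda_1(x))\in\calF_0$, so $g\in\calF_0$ by part (ii) of Theorem~\ref{thrm:Deg0HHS}, and $g$ vanishes to order at least $2$. Writing $g(x)=\sum_j g_j x^j$ gives $c_n = g_{n-2k+2}$, hence $c_{m+i+2k-1}=g_{m+i+1}$, and the reduced condition becomes $\sum_{i=0}^m(-1)^i\binom mi g_{m+i+1}=0$ for all $m\geq 1$. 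The shift $m\mapsto m-1$ identifies these, for $m\geq 2$, with exactly the $\calF_0$-relations of part (iii) of Theorem~\ref{thrm:Deg0HHS}; the only missing relation is the $m=1$ case $g_1=0$, which is automatic because every element of $\calF_0$ is a series in $\frac{x^2}{1-x}$ and so has vanishing linear coefficient. This settles both directions: from the closed form, $g\in\calF_0$ forces the relations; conversely, given the relations, one defines $c_{i+2k}=\gamma_i\frac{(i+1)!}{(i+2k)!}$ (with $c_n=0$ for $n<2k$), sets $g_j=c_{j+2k-2}$ for $j\geq 2$ and $g_0=g_1=0$, concludes $g\in\calF_0$ from part (iii) of Theorem~\ref{thrm:Deg0HHS}, writes $g=\tilde\rho(\lambda_1)$ with $\tilde\rho(0)=0$ hence $\tilde\rho(y)=y\psi(y)$, and recovers the claimed form. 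For $k=0$ the same bookkeeping applies once one notes that $x^{-2}$ times $g$ (which starts in degree $2$) is still a genuine power series, matching the integral interpretation of $\frac{d^{-1}}{dx^{-1}}$.

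Finally, for $k\geq 1$ I would deduce the equivalence of Equations~\eqref{eq:RelPosOddChar} and~\eqref{eq:RelPosOddCharKG1} by writing $\frac{x^{2k}}{1-x}\psi(\lambda_1) = x^{2k-2}\bigl(\lambda_1\psi(\lambda_1)\bigr)$: the $\psi$-family realizes precisely $x^{2k-2}\tilde\rho(\lambda_1)$ with $\tilde\rho(0)=0$, whereas the $\rho$-family allows arbitrary $\rho$, and the discrepancy $x^{2k-2}\rho(0)$ has degree $2k-2<2k-1$, so it is annihilated by $\frac{d^{2k-1}}{dx^{2k-1}}$ and the two families produce the same $\varphi$. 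I expect the main obstacle to be the factorial/binomial simplification above together with the index bookkeeping: the single-step shift that aligns the reduced condition with part (iii) of Theorem~\ref{thrm:Deg0HHS}, and the careful handling of the data lost to the antiderivative (equivalently, the vanishing of $f$ below degree $2k$), which must be tracked so that the reconstruction of $\psi$ is unambiguous.
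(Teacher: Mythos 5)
Your proof is correct and follows essentially the same route as the paper: your reweighted series $g(x)=\sum_j g_j x^j$ with $g_{i+2}=\frac{(i+1)!}{(i+2k)!}\gamma_i$ is exactly the paper's auxiliary series $\widetilde{\varphi}(x)$, your factorial collapse is the paper's rescaling by $\frac{(m!)^2}{(2m+2k-1)!}$, and both arguments then invoke Theorem \ref{thrm:Deg0HHS} (parts (ii) and (iii)) with the same one-step index shift and handle the $k\geq 1$ equivalence by noting that $\frac{d^{2k-1}}{dx^{2k-1}}$ annihilates the degree-$(2k-2)$ term coming from $\rho(0)$. The only difference is presentational: you extract coefficients from the closed form first, whereas the paper builds $\widetilde{\varphi}$ from the relations.
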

\begin{proof}
For each $m$, multiplying both sides by the scalar $\frac{(m!)^2}{(2m+2k-1)!}$, we
rewrite Equation \eqref{eq:RelPosOddK} as
\begin{equation}
\label{eq:RelPosOddKrewrite}
    \sum\limits_{i=0}^m (-1)^i {m\choose i} \frac{(m+i)!}{(m+2k+i-1)!} \gamma_{m+i-1} = 0.
\end{equation}
Define $\widetilde{\varphi}(x) = \sum_{i=0}^\infty \widetilde{\gamma}_i x^i$ by
$\widetilde{\gamma}_0 = \widetilde{\gamma}_1 = 0$ and
$\widetilde{\gamma}_{i+2} = \frac{(i+1)!}{(i+2k)!} \gamma_i$, and then for each $m$,
the $\gamma_i$ satisfy Equation \eqref{eq:RelPosOddKrewrite} if and only if
\[
    \sum\limits_{i=0}^m (-1)^i {m \choose i} \widetilde{\gamma}_{m+i+1} = 0,
\]
which, along with $\widetilde{\gamma}_1 = 0$ is equivalent to
$\widetilde{\varphi}(x) \in \mathcal{F}_0$. By Theorem \ref{thrm:Deg0HHS},
and as $\widetilde{\varphi}(x)$ has no constant nor linear terms by definition,
this is equivalent to the existence of a $\psi(y) \in \K[\![y]\!]$ such that
\[
    \widetilde{\varphi}(x)
        =   \sum_{i=0}^\infty \frac{(i+1)!}{(i+2k)!} \gamma_i x^{i+2}
        =   \left(\frac{x^2}{1-x}\right) \psi \left(\frac{x^2}{1-x}\right).
\]
Multiplying by $x^{2k-2}$ yields
\[
    \sum_{i=0}^\infty \frac{(i+1)!}{(i+2k)!} \gamma_i x^{i+2k}
        =   \left(\frac{x^{2k}}{1-x}\right) \psi \left(\frac{x^2}{1-x}\right),
\]
i.e.
\[
    \sum_{i=0}^\infty \gamma_i x^{i+1}
        =   \frac{d^{2k-1}}{dx^{2k-1}}\left[ \left(\frac{x^{2k}}{1-x}\right)
                \psi \left(\frac{x^2}{1-x}\right) \right].
\]
Dividing both sides by $x$ yields Equation \eqref{eq:RelPosOddChar}.

If $k \geq 1$, then we can rewrite Equation \eqref{eq:RelPosOddChar} as
\[
    \sum\limits_{i= 0}^\infty \gamma_i x^i = \frac{1}{x} \cdot \frac{d^{2k-1}}{dx^{2k-1}}
        \left[x^{2k-2}\left(\frac{x^2}{1-x}\right)\psi\left( \frac{x^2}{1 - x} \right) \right],
\]
and Equation \eqref{eq:RelPosOddCharKG1} holds with $\rho(y) = y\psi(y)$. Note that the
derivative maps the constant term $\rho(0)$ of $\rho(y)$ to $0$, which makes the last step an equivalence.
\end{proof}

We next have the following, which demonstrates Theorem \ref{thrm:Relations} in the
case $r = 1$.

\begin{lemma}
\label{lem:Relr1}
A power series $\varphi(x) = \sum_{i= 0}^\infty \gamma_i x^i$ is an element of $\mathcal{F}_1$
if and only if the $\gamma_i$ satisfy
\begin{equation}
\label{eq:Relr1}
    \sum\limits_{i=0}^{m} (-1)^i {2m - 1 \choose m - i}{m + i \choose i} \gamma_{m + i - 1}
        = 0
\end{equation}
for each $m \geq 1$.
\end{lemma}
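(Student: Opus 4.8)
The plan is to reduce Lemma \ref{lem:Relr1} to Lemma \ref{lem:RelPosOddCharacterize} specialized to $k=0$ (so that $r=2k+1=1$) and then to identify membership in $\mathcal F_1$ with the resulting integral characterization. Observe first that for $k=0$ Equation \eqref{eq:RelPosOddK} is literally Equation \eqref{eq:Relr1}; hence by Lemma \ref{lem:RelPosOddCharacterize} the relations \eqref{eq:Relr1} for all $m\ge 1$ are equivalent to the existence of $\psi(y)\in\K[\![y]\!]$ with $\varphi(x)=\tfrac1x\int \tfrac{1}{1-x}\,\psi(x^2/(1-x))\,dx$, the integral taken with vanishing constant term, which is Equation \eqref{eq:RelPosOddChar} for $k=0$. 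It therefore suffices to prove that this integral form is equivalent to $\varphi\in\mathcal F_1$.

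First I would reformulate the defining relation of $\mathcal F_1$ in terms of $\Phi(x):=x\,\varphi(x)$. Writing $\sigma(x):=x/(x-1)$, an involution fixing $0$ under which $\lambda_1(x)=x^2/(1-x)$ is invariant, a direct manipulation using $1/\sigma(x)=(x-1)/x$ shows that $\varphi(\sigma(x))=(1-x)\varphi(x)$ holds if and only if $\Phi(\sigma(x))=-\Phi(x)$; that is, $\varphi\in\mathcal F_1$ is exactly the anti-invariance of $\Phi$ under $\sigma$. Next I would differentiate this anti-invariance: setting $g:=\Phi'$ and using $\sigma'(x)=-1/(1-x)^2$, the chain rule turns $\Phi(\sigma(x))=-\Phi(x)$ into $g(\sigma(x))=(1-x)^2 g(x)$, i.e. $g\in\mathcal F_2$. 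Conversely, if $g\in\mathcal F_2$ then $\tfrac{d}{dx}[\Phi(\sigma(x))+\Phi(x)]=0$, so $\Phi(\sigma(x))+\Phi(x)$ is constant, and evaluating at the fixed point $x=0$ (where $\Phi(0)=0$) pins this constant to $0$. Thus $\varphi\in\mathcal F_1$ if and only if $g=\Phi'\in\mathcal F_2$.

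Finally, Lemma \ref{lem:EvenJustShifted} with $r=2$ together with Theorem \ref{thrm:Deg0HHS}(ii) identifies $\mathcal F_2$ with the series of the form $g(x)=\tfrac{1}{1-x}\psi(\lambda_1(x))$: indeed $g\in\mathcal F_2$ iff $x^2 g(x)\in\mathcal F_0$ iff $x^2 g(x)=\eta(\lambda_1(x))$ for some $\eta(y)\in\K[\![y]\!]$, and since $g$ is a power series one has $\eta(0)=0$, so $\eta(y)=y\,\psi(y)$ with $\psi(y)=\eta(y)/y\in\K[\![y]\!]$ and hence $g(x)=\tfrac{1}{1-x}\psi(\lambda_1(x))$. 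Recalling $\Phi=x\varphi$ with $\Phi(0)=0$ and $\Phi'=g$, we recover $\varphi(x)=\tfrac1x\int \tfrac{1}{1-x}\psi(\lambda_1(x))\,dx$, precisely the integral form above. This closes the chain of equivalences $\varphi\in\mathcal F_1 \Leftrightarrow \Phi\circ\sigma=-\Phi \Leftrightarrow \Phi'\in\mathcal F_2 \Leftrightarrow$ integral form $\Leftrightarrow$ \eqref{eq:Relr1}.

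I expect the only delicate points to be bookkeeping rather than substance: tracking orders of vanishing so that $\Phi(0)=0$ and $\eta(0)=0$, the latter being exactly what licenses the passage from $\eta$ to $\psi=\eta/y$, and fixing the integration constant correctly by evaluating the anti-invariance relation at the fixed point $x=0$ of $\sigma$. The Möbius identities $1-\sigma(x)=1/(1-x)$, $\sigma'(x)=-1/(1-x)^2$, and $\lambda_1\circ\sigma=\lambda_1$ are routine and can each be verified in a single line.
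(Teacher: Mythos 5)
Your proposal is correct, and while it shares the paper's first reduction---both proofs begin by specializing Lemma \ref{lem:RelPosOddCharacterize} to $k=0$, so that everything rests on showing the integral form \eqref{eq:RelPosOddChar} is equivalent to $\varphi\in\mathcal{F}_1$---your argument for that equivalence is genuinely different. The paper invokes Theorem \ref{thrm:LaurentPoly} to represent $\varphi(x)=\rho(x)/(x-2)$ with $\rho\in\mathcal{F}_0$, introduces $\chi(x)=(1-x)\frac{d}{dx}\left[x\varphi(x)\right]$, verifies $\chi\in\mathcal{F}_0$ by a product-rule computation using the differentiated invariance \eqref{eq:r1Diff}, and for the converse carries out a change of variables $\zeta=\xi/(1-\xi)$ inside a formal integral. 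You instead characterize $\mathcal{F}_1$ as anti-invariance of $\Phi=x\varphi$ under $\sigma(x)=x/(x-1)$, pass to $\Phi'\in\mathcal{F}_2$ by differentiation (with the converse pinning the integration constant at the fixed point $x=0$, where $\Phi(0)=0$), and identify $\mathcal{F}_2$ via Lemma \ref{lem:EvenJustShifted} and Theorem \ref{thrm:Deg0HHS}(ii), the vanishing $\eta(0)=0$ correctly licensing $\psi=\eta/y$. The two middle arguments are related by the grading: your $g=\Phi'$ and the paper's $\chi=(1-x)g$ differ by the factor $1-x\in\mathcal{F}_{-2}$, so $g\in\mathcal{F}_2$ if and only if $\chi\in\mathcal{F}_0$; but your version makes the graded mechanism explicit, treats both directions symmetrically, and avoids both the representation $\rho/(x-2)$ and the substitution inside the formal integral, which is the most delicate manipulation in the paper's converse. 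What the paper's route buys in exchange is that it stays entirely inside $\mathcal{F}_0$, where Theorem \ref{thrm:Deg0HHS} applies verbatim, and exhibits the generator $\chi$ concretely. Your bookkeeping is sound: composition with $\sigma$ is legitimate formally because $\sigma$ has zero constant term, cancellation of powers of $x$ is valid in the domain $\K[\![x]\!]$, and the step from $F'=0$ to $F$ constant uses characteristic zero, which holds since $\K\in\{\Q,\R,\C\}$.
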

\begin{proof}
As above, by Equation \eqref{eq: power series functional equation},
$\rho(x) \in \mathcal{F}_0$ if and only if
\begin{equation}
\label{eq:r1}
    \rho\left(\frac{x}{x-1}\right) = \rho(x).
\end{equation}
Differentiating yields
\begin{equation}
\label{eq:r1Diff}
    \rho^\prime\left(\frac{x}{x-1}\right) = -(1 - x)^2 \rho^\prime(x).
\end{equation}

Let $\varphi(x)\in\mathcal{F}_1$, and then there is a $\rho(x) \in\mathcal{F}_0$
such that $\varphi(x) = \rho(x)/(x-2)$ by Theorem \ref{thrm:LaurentPoly}.
Define
\begin{equation}
\label{eq:DefChi}
    \chi(x) =   (1 - x) \frac{d}{dx}\left[x\varphi(x)\right],
\end{equation}
and then we may express
\[
    \varphi(x)  =   \frac{1}{x}\int\limits_0^x \frac{1}{1-\xi}\chi(\xi)\, d\xi ,
\]
where the integral is the formal integral of power series, interpreted term by term.
Substituting $\varphi(x) = \rho(x)/(x-2)$ into Equation \eqref{eq:DefChi} and applying the product rule,
we have
\[
    \chi(x) =   (1 - x)\left[\frac{x}{x-2} \rho^\prime(x) - \frac{2}{(x - 2)^2}\rho(x)\right],
\]
from which a simple computation using Equations \eqref{eq:r1} and \eqref{eq:r1Diff}
demonstrates that $\chi(x/(x - 1)) = \chi(x)$ so that $\chi(x) \in \mathcal{F}_0$.
Then by Theorem \ref{thrm:Deg0HHS}, it follows
that there is a formal power series $\psi(y) \in \K[\![y]\!]$ such that
$\chi(x) = \psi(x^2/(1 - x))$ and hence
\[
    \varphi(x)  =   \frac{1}{x}\int\limits_0^x \frac{1}{1-\xi}\psi\left(\frac{\xi^2}{1-\xi}\right)\, d\xi .
\]
This is precisely Equation \eqref{eq:RelPosOddChar} for the case $k=0$, so by
Lemma \ref{lem:RelPosOddCharacterize}, the $\gamma_i$ satisfy Equation
\eqref{eq:Relr1}.

Conversely, suppose the $\gamma_i$ satisfy Equation \eqref{eq:Relr1} so that by
Lemma \ref{lem:RelPosOddCharacterize} and Theorem \ref{thrm:Deg0HHS}, there is a
$\chi(x)\in\mathcal{F}_0$ such that
\[
    \varphi(x)  =   \frac{1}{x}\int\limits_0^x \frac{1}{1-\xi}\chi(\xi)\, d\xi .
\]
Let
\[
    \rho(x) =   (x-2)\varphi(x) =   \frac{x-2}{x}\int\limits_0^x \frac{1}{1-\xi}\chi(\xi)\, d\xi .
\]
By Theorem \ref{thrm:Deg0HHS}, $\chi(x) = \chi(x/(x-1))$ so that using the substitution $\zeta = \xi/(1-\xi)$ we have
\begin{align*}
    \rho\left(\frac{x}{x-1}\right) &=   -\frac{x-2}{x}\int\limits_0^{\frac{x}{x-1}} \frac{1}{1-\xi}\chi(\xi)\, d\xi
  = -\frac{x-2}{x}\int\limits_0^{\frac{x}{x-1}} \frac{1}{1-\xi}\chi\left(\frac{\xi}{\xi-1}\right) d\xi\\
& = \frac{x-2}{x}\int\limits_0^{x} \frac{1}{1-\zeta}\chi(\zeta)\, d\zeta = \rho(x).
\end{align*}
Hence $\rho(x) \in \mathcal{F}_0$, and hence $\varphi(x) \in\mathcal{F}_1$ by
Theorem \ref{thrm:LaurentPoly}.
\end{proof}

To proceed, we will need to establish the following identity.
We will use $k^{(r)} = k(k+1)(k+2)\cdots (k+r-1)$ to denote the rising Pochhammer symbol
and $(k)_r = k(k-1)(k-2)\cdots(k-r+1)$ to denote the falling Pochhammer symbol.

\begin{lemma}
\label{lem:DiffFormula}
Let $r$ be a positive integer and let $k, a \in \K$. Then
\begin{equation}
\label{eq:DiffFormula}
    \frac{d^{r+1}}{dx^{r+1}} \left[ x^r \left(\frac{x}{x-a}\right)^k\right]
    =
    (-a)^{r+1} k^{(r+1)}\frac{x^{k-1}}{(x-a)^{k+r+1}}.
\end{equation}
\end{lemma}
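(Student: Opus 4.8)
The plan is to prove \eqref{eq:DiffFormula} by induction on $r$, exploiting the observation that the expression being differentiated for the parameter $r$ is simply $x$ times the one for $r-1$. Writing $F_r(x) := x^r\left(\frac{x}{x-a}\right)^k = x^{r+k}(x-a)^{-k}$, the claim becomes $F_r^{(r+1)} = (-a)^{r+1}k^{(r+1)}\,x^{k-1}(x-a)^{-k-r-1}$.

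First I would establish the base case $r=0$ directly: differentiating $F_0(x) = x^k(x-a)^{-k}$ and factoring out $x^{k-1}(x-a)^{-k-1}$ gives $F_0'(x) = -ak\,x^{k-1}(x-a)^{-k-1}$, which is \eqref{eq:DiffFormula} for $r=0$ with a single derivative. Although the lemma only asserts $r\ge 1$, this identity is the natural anchor for the induction.

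For the inductive step I would use $F_r = xF_{r-1}$ together with the Leibniz rule---noting that the factor $x$ contributes only a first derivative---to write $F_r^{(r+1)} = x\,F_{r-1}^{(r+1)} + (r+1)F_{r-1}^{(r)}$. The inductive hypothesis gives $F_{r-1}^{(r)} = (-a)^r k^{(r)} x^{k-1}(x-a)^{-k-r}$; differentiating this once more and factoring out $x^{k-2}(x-a)^{-k-r-1}$ produces the linear factor $-(r+1)x-(k-1)a$ for $F_{r-1}^{(r+1)}$. Substituting both expressions and extracting the common factor $(-a)^r k^{(r)}x^{k-1}(x-a)^{-k-r-1}$, the remaining bracket is $-(r+1)x-(k-1)a + (r+1)(x-a)$, in which the terms linear in $x$ cancel and leave the constant $-a(k+r)$. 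Hence $F_r^{(r+1)} = (-a)^{r+1}k^{(r)}(k+r)\,x^{k-1}(x-a)^{-k-r-1}$, and the recursion $k^{(r)}(k+r) = k^{(r+1)}$ for the rising Pochhammer symbol completes the step.

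The only genuine computation is this cancellation of the $x$-linear terms in the bracket, and I expect no real obstacle: the point of the factorization $F_r = xF_{r-1}$ is precisely that the two pieces produced by the Leibniz rule conspire so that all polynomial dependence on $x$ disappears, leaving a constant. I would note that one could instead apply the Leibniz rule directly to the product $x^{r+k}\cdot(x-a)^{-k}$, but this forces one to evaluate a Chu--Vandermonde-type summation identity separately, whereas the inductive argument sidesteps it entirely.
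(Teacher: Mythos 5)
Your induction is correct, and it is a genuinely different argument from the one in the paper. The paper proves the lemma by a single direct computation: it expands $x^r\left(\frac{x}{x-a}\right)^k = x^r(1-a/x)^{-k}$ as a binomial series in $a/x$, differentiates term by term, re-indexes, converts the binomial coefficients through falling and rising Pochhammer symbols via $(-k)_{r+i+1} = (-1)^{r+1}k^{(r+1)}(-k-r-1)_i/\text{(factorials)}$, and then recognizes the resulting sum as the binomial series of $\left(\frac{x-a}{x}\right)^{-k-r-1}$. Your route via $F_r = xF_{r-1}$ and the two-term Leibniz expansion $F_r^{(r+1)} = xF_{r-1}^{(r+1)} + (r+1)F_{r-1}^{(r)}$ replaces all of that series bookkeeping with a one-line cancellation, $-(r+1)x-(k-1)a+(r+1)(x-a) = -a(k+r)$, plus the recursion $k^{(r)}(k+r)=k^{(r+1)}$; I verified the bracket computation and the base case $F_0' = -ak\,x^{k-1}(x-a)^{-k-1}$, and anchoring at $r=0$ legitimately covers all positive $r$. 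The one point you should make explicit: since $k\in\K$ need not be an integer, expressions such as $x^{k-1}(x-a)^{-k-r-1}$ are not literal rational functions and must be read as $x^{-r-2}(1-a/x)^{-k-r-1}$, a formal Laurent series in $x^{-1}$ defined by the binomial series --- precisely the interpretation the paper builds in from its first step. This is not a gap, because the formal derivative satisfies the product and chain rules you invoke, but a sentence fixing this convention would make your proof airtight. On balance, your argument is shorter and more transparent, at the cost of being inductive rather than closed-form; the paper's computation buys an explicit view of every intermediate coefficient, which is in the spirit of its other series manipulations but is not needed for the statement itself.
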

\begin{proof}
We first use the binomial series to rewrite
\begin{align*}
    x^r \left(\frac{x}{x-a}\right)^k
        &=      x^r \left(1 - \frac{a}{x}\right)^{-k}
        \\&=    x^r \sum\limits_{i=0}^\infty {-k \choose i} \left(- \frac{a}{x} \right)^i
        \\&=    \sum\limits_{i=0}^\infty {-k \choose i} (-a)^i x^{r-i}.
\end{align*}
Differentiating, we express the left-hand side of Equation \eqref{eq:DiffFormula} as
\begin{align*}
    \frac{d^{r+1}}{dx^{r+1}} \left[ x^r \left(\frac{x}{x-a}\right)^k\right]
        &=      \frac{d^{r+1}}{dx^{r+1}} \left[
                    \sum\limits_{i=0}^\infty {-k \choose i} (-a)^i x^{r-i} \right]\\
        &=      \frac{d^{r+1}}{dx^{r+1}} \left[
                    \sum\limits_{i=r+1}^\infty {-k \choose i} (-a)^i x^{r-i} \right]
        \\&=    \frac{d^{r+1}}{dx^{r+1}} \left[
                    \sum\limits_{i=0}^\infty {-k \choose i+r+1} (-a)^{i+r+1} x^{-i-1} \right]
        \\&=    \sum\limits_{i=0}^\infty {-k \choose i+r+1 }
                    (-a)^{i+r+1} (-1)^{r+1} (i+1)^{(r+1)} x^{-i-r-2}
        \\&=    \sum\limits_{i=0}^\infty \frac{(-k)_{r+i+1}}{(i+r+1)!}
                    (-a)^{i+r+1} (-1)^{r+1} \frac{(i+r+1)!}{i!} x^{-i-r-2}
        \\&=    k^{(r+1)} \sum\limits_{i=0}^\infty
                    \frac{(-k-r-1)_{i}}{i!} (-a)^{i+r+1} x^{-i-r-2}
        \\&=    (-a)^{r+1} k^{(r+1)} x^{-r-2}
                    \sum\limits_{i=0}^\infty {-k-r-1\choose i} (-a)^i x^{-i}
        \\&=    (-a)^{r+1} k^{(r+1)} x^{-r-2}
                    \left(\frac{x - a}{x}\right)^{-k-r-1}
        \\&=    (-a)^{r+1} k^{(r+1)}
                    \frac{x^{k-1}}{(x - a)^{k+r+1}}.
        \qedhere
\end{align*}
\end{proof}

With this, we now prove the following, which demonstrates Theorem \ref{thrm:Relations}
when $r > 1$ is odd, i.e. $r = 2k+1$ for $k \geq 1$.

\begin{lemma}
\label{lem:RelPosOddg1}
Let $\varphi(x) = \sum_{i= 0}^\infty \gamma_i x^i \in \K[\![x]\!]$, and let $k\geq 1$
be an integer. Then the $\gamma_i$ satisfy Equation \eqref{eq:RelPosOddK}, equivalently
\eqref{eq:DegreePositiveOdd} with $r = 2k+1$, if and only if $\varphi(x) \in \mathcal{F}_{2k+1}$.
\end{lemma}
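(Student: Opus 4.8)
The plan is to reduce the case $r=2k+1$ with $k\geq 1$ to the already-established case $r=1$ (Lemma~\ref{lem:Relr1}) by using the characterization via formal derivatives in Lemma~\ref{lem:RelPosOddCharacterize}, now equipped with the explicit differentiation formula of Lemma~\ref{lem:DiffFormula}. The key observation is that both the membership $\varphi(x)\in\mathcal{F}_{2k+1}$ and the relation \eqref{eq:RelPosOddK} should be expressible in terms of a single underlying series in $\mathcal{F}_0$, and the bridge between $k\geq 1$ and $k=0$ is exactly $2k-1$-fold differentiation against the template $x^{2k-2}\rho(x^2/(1-x))$.

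First I would invoke Lemma~\ref{lem:RelPosOddCharacterize}: for $k\geq 1$, the relations \eqref{eq:RelPosOddK} hold for all $m\geq 1$ if and only if there is some $\rho(y)\in\K[\![y]\!]$ with
\[
    \sum_{i=0}^\infty \gamma_i x^i
        = \frac{1}{x}\cdot\frac{d^{2k-1}}{dx^{2k-1}}
            \left[x^{2k-2}\,\rho\!\left(\frac{x^2}{1-x}\right)\right],
\]
which is Equation~\eqref{eq:RelPosOddCharKG1}. So it suffices to show that $\varphi(x)\in\mathcal{F}_{2k+1}$ is equivalent to admitting such a representation. By Theorem~\ref{thrm:LaurentPoly}, $\varphi(x)\in\mathcal{F}_{2k+1}$ if and only if $(x-2)^{2k+1}\varphi(x)\in\mathcal{F}_0$, and by Theorem~\ref{thrm:Deg0HHS} this in turn is equivalent to writing $(x-2)^{2k+1}\varphi(x)=\sigma(x^2/(1-x))$ for some $\sigma\in\K[\![y]\!]$; equivalently, using the basis \eqref{eq:membF_r}, every element of $\mathcal{F}_{2k+1}$ has the form $\sum_i \delta_i\,x^{2i}(x-2)^{-2k-1-2i}$.

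The crux is then to match these two descriptions, and here Lemma~\ref{lem:DiffFormula} does the real work. I would apply \eqref{eq:DiffFormula} with $a=2$, exponent $r\rightsquigarrow 2k-2$ (so $r+1=2k-1$), and the power $k$ in the lemma set to the summation index governing the terms $x^{2i}(x-2)^{-2k-1-2i}$; the formula computes $\frac{1}{x}\frac{d^{2k-1}}{dx^{2k-1}}[x^{2k-2}(x/(x-2))^{\,\bullet}]$ precisely as a multiple of $x^{\bullet-2}/(x-2)^{\bullet+2k-1}$, which is exactly of the form appearing in the expansion \eqref{eq:membF_r} for $\mathcal{F}_{2k+1}$. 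Thus differentiating $x^{2k-2}\rho(x^2/(1-x))$ term by term — expanding $\rho(x^2/(1-x))$ against a basis adapted to the substitution $x^2/(1-x)=(x/(x-2))^{\text{-type}}$ expressions — produces exactly a series in $\mathcal{F}_{2k+1}$, and the nonvanishing Pochhammer factor $k^{(r+1)}=k^{(2k-1)}\neq 0$ guarantees the correspondence $\rho\leftrightarrow\varphi$ is a bijection (no information is lost to the kernel of the high-order derivative, since the template $x^{2k-2}\rho(\cdots)$ carries no terms killed by $\frac{d^{2k-1}}{dx^{2k-1}}$ beyond the harmless constant already accounted for in Lemma~\ref{lem:RelPosOddCharacterize}).

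The main obstacle I anticipate is the bookkeeping of this term-by-term differentiation: one must verify that $\frac{1}{x}\frac{d^{2k-1}}{dx^{2k-1}}[x^{2k-2}\rho(x^2/(1-x))]$ lands in the span of $\{x^{2i}(x-2)^{-2k-1-2i}\}$ with \emph{invertible} transition coefficients, so that the representation \eqref{eq:RelPosOddCharKG1} is available precisely when $\varphi\in\mathcal{F}_{2k+1}$. Rather than expand $\rho(x^2/(1-x))$ directly, it is cleaner to re-expand it in the variable $x/(x-2)$ via $\lambda_2(x)=(x/(x-2))^2\in\mathcal{F}_0$ — noting that $x^2/(1-x)$ and $(x/(x-2))^2$ both generate the relevant piece of $\mathcal{F}_0\cap\mathfrak{m}^2$ as remarked after Theorem~\ref{thrm:Deg0HHS} — so that Lemma~\ref{lem:DiffFormula} applies cleanly monomial by monomial. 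Once this identification is set up, the equivalence of \eqref{eq:RelPosOddCharKG1} with $\varphi\in\mathcal{F}_{2k+1}$ follows, and combined with Lemma~\ref{lem:RelPosOddCharacterize} this completes the proof.
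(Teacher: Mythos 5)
Your proposal is correct and takes essentially the same approach as the paper: reduce via Lemma~\ref{lem:RelPosOddCharacterize} to the representation \eqref{eq:RelPosOddCharKG1}, re-expand $\rho(x^2/(1-x))$ as a series in $\lambda_2(x)=(x/(x-2))^2$, and apply Lemma~\ref{lem:DiffFormula} with $a=2$ term by term, identifying the image with $\mathcal{F}_{2k+1}$ through Theorems~\ref{thrm:Deg0HHS} and \ref{thrm:LaurentPoly}. Your bookkeeping of the invertible transition coefficients and the lost constant term (the vanishing $i=0$ term, absorbed by $\rho(0)$ as in Lemma~\ref{lem:RelPosOddCharacterize}) matches the paper's handling exactly.
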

\begin{proof}
As $k \geq 1$, we have by Lemma \ref{lem:RelPosOddCharacterize} that the $\gamma_i$ satisfy
Equation \eqref{eq:RelPosOddK} if and only if there is a power series $\rho(y)\in\K[\![y]\!]$
satisfying Equation \eqref{eq:RelPosOddCharKG1}. Using Theorem~\ref{thrm:Deg0HHS} and the
generator given in Equation \eqref{eq:xox-2sqr},
given $\rho(y)$, there is a power series $\chi(y) = \sum_{i=0}^\infty \delta_i y^i \in\K[\![y]\!]$
such that $\chi((x/(x-2))^2) = \rho(x^2/(1-x))\in \mathcal{F}_0$. That is, the $\gamma_i$ satisfy
Equation \eqref{eq:RelPosOddK} if and only if
\begin{align*}
    \sum\limits_{i= 0}^\infty \gamma_i x^i
        &=      \frac{1}{x} \cdot \frac{d^{2k-1}}{dx^{2k-1}}
                    \left[x^{2k-2} \chi\left( \left(\frac{x}{x - 2}\right)^2 \right) \right]
        \\&=    \frac{1}{x} \cdot \frac{d^{2k-1}}{dx^{2k-1}}
                    \left[x^{2k-2} \sum\limits_{i=0}^\infty \delta_i \left(\frac{x}{x - 2}\right)^{2i} \right]
        \\&=    \frac{1}{x} \sum\limits_{i=0}^\infty \delta_i \cdot \frac{d^{2k-1}}{dx^{2k-1}}
                    \left[x^{2k-2} \left(\frac{x}{x - 2}\right)^{2i} \right].
\end{align*}
Applying Lemma \ref{lem:DiffFormula} to each term and noting that the term $i=0$ vanishes,
we continue
\begin{align*}
    &=      \frac{1}{x} \sum\limits_{i=1}^\infty \delta_i (-2)^{2k-1}
                (2i)^{(2k-1)} \frac{x^{2i-1}}{(x-2)^{2i+2k-1}}
    \\&=    \frac{1}{(x-2)^{2k+1}} \sum\limits_{i=1}^\infty \delta_i (-2)^{2k-1}
                \frac{(2i + 2k - 2)!}{(2i - 1)!} \left(\frac{x}{x-2}\right)^{2i-2}
    \\&=    \frac{1}{(x-2)^{2k+1}} \sum\limits_{i=0}^\infty \delta_{i+1} (-2)^{2k-1}
                \frac{(2i + 2k)!}{(2i + 1)!} \left(\frac{x}{x-2}\right)^{2i}.
\end{align*}
By Theorem \ref{thrm:Deg0HHS} along with the generator of Equation \eqref{eq:xox-2sqr},
the factor that is a power series in $(x/(x-2))^2$ is an element of $\mathcal{F}_0$
so that by Theorem \ref{thrm:LaurentPoly},
this expression is an element of $\mathcal{F}_{2k+1}$, completing the proof.
\end{proof}

We next consider the case of $r < 0$ odd with the assumption that the first $1-r$
coefficients vanish.

\begin{lemma}
\label{lem:OddJustShifted}
Let $\varphi(x) = \sum_{i= 0}^\infty \gamma_i x^i \in \K[\![x]\!]$ and let $r$ be an odd
integer. If $r < 0$, assume that $\gamma_i = 0$ for each $i \leq -r$. Then $\varphi\in\mathcal{F}_r$
if and only if
$\overline{\varphi}(x) := x^{r-1}\varphi(x)\in\mathcal{F}_1$.
\end{lemma}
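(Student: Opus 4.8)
The plan is to imitate the proof of Lemma~\ref{lem:EvenJustShifted}, exploiting that multiplication by a suitable power of $x$ shifts the grading of $\calF$. The key observation is that, because $r$ is odd, the exponent $r-1$ is \emph{even}, and hence the (formal Laurent) monomial $x^{r-1}$ obeys the functional equation of $\calF_{1-r}$: substituting $x\mapsto x/(x-1)$ yields
\begin{align*}
\left(\frac{x}{x-1}\right)^{r-1}
= \frac{x^{r-1}}{(x-1)^{r-1}}
= \frac{x^{r-1}}{(1-x)^{r-1}}
= (1-x)^{1-r}x^{r-1},
\end{align*}
where the middle equality uses $(x-1)^{r-1}=(1-x)^{r-1}$ for $r-1$ even. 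This is exactly the multiplicative cocycle needed to pass between the defining equations of $\calF_r$ and $\calF_1$.

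First I would settle the power-series bookkeeping. When $r<0$ the monomial $x^{r-1}=x^{-(1-r)}$ has negative exponent, so $\overline{\varphi}(x)=x^{r-1}\varphi(x)$ is a priori only a formal Laurent series; the hypothesis $\gamma_i=0$ for $i\le -r$ guarantees that $\varphi(x)$ is divisible by $x^{1-r}$, so that $\overline{\varphi}(x)$ is a genuine power series. Conversely, if $\overline{\varphi}(x)=x^{r-1}\varphi(x)\in\calF_1$, then $\varphi(x)=x^{1-r}\overline{\varphi}(x)$ visibly vanishes to order at least $1-r$, so the hypothesis on the $\gamma_i$ is automatic. When $r>0$ the exponent $r-1\ge 0$ and no such issue arises.

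For the forward direction I would assume $\varphi(x)\in\calF_r$ and combine the displayed identity for $x^{r-1}$ with Equation~\eqref{eq: power series functional equation}:
\begin{align*}
\overline{\varphi}\!\left(\frac{x}{x-1}\right)
= \left(\frac{x}{x-1}\right)^{r-1}\varphi\!\left(\frac{x}{x-1}\right)
= (1-x)^{1-r}x^{r-1}\,(1-x)^r\varphi(x)
= (1-x)\,\overline{\varphi}(x),
\end{align*}
so $\overline{\varphi}(x)\in\calF_1$. The converse is the same computation run backwards: writing $\varphi(x)=x^{1-r}\overline{\varphi}(x)$ and using $(x/(x-1))^{1-r}=(1-x)^{r-1}x^{1-r}$ (again valid since $1-r$ is even) together with $\overline{\varphi}(x/(x-1))=(1-x)\overline{\varphi}(x)$ recovers $\varphi(x/(x-1))=(1-x)^r\varphi(x)$.

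The computations themselves are routine one-liners; the only point requiring care—and the genuine obstacle—is the interplay between the grading shift and the power-series condition in the case $r<0$, which is precisely why the vanishing hypothesis $\gamma_i=0$ for $i\le -r$ is imposed. It is worth noting the contrast with the even case of Lemma~\ref{lem:EvenJustShifted}: there one multiplies by $x^r$ to reach $\calF_0$, whereas here $r$ being odd forces an even-power shift by $x^{r-1}$, which lands in the odd-degree piece $\calF_1$ rather than $\calF_0$.
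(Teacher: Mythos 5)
Your proof is correct and matches the paper's own argument: the paper likewise reduces to the power-series bookkeeping of Lemma \ref{lem:EvenJustShifted} and then invokes ``a simple computation using the fact that $r$ is odd,'' which is precisely the identity $\left(x/(x-1)\right)^{r-1}=(1-x)^{1-r}x^{r-1}$ (valid since $r-1$ is even) that you make explicit. Your write-up simply spells out the details the paper leaves to the reader, including the correct observation that the hypothesis $\gamma_i=0$ for $i\le -r$ is exactly what makes $\overline{\varphi}$ a genuine power series when $r<0$.
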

\begin{proof}
As in the proof of Lemma \ref{lem:EvenJustShifted}, $\overline{\varphi}(x)$
is a power series in each case due to the assumption that the first $\gamma_i$ vanish.
A simple computation using the fact that $r$ is odd demonstrates that
$\varphi(x/(x-1)) = (1 - x)^r \varphi(x)$ if and only if
$\overline{\varphi}(x/(x-1)) = (1 - x) \overline{\varphi}(x)$.
\end{proof}

\begin{corollary}
\label{cor:RelNegOdd}
Let $\varphi(x) = \sum_{i= 0}^\infty \gamma_i x^i \in \K[\![x]\!]$ and let $r$ be an odd
negative integer. Assume that $\gamma_i = 0$ for each $i \leq -r$. Then $\varphi\in\mathcal{F}_r$
if and only if the $\gamma_i$ satisfy Equation \eqref{eq:DegreeNegativeOdd}.
\end{corollary}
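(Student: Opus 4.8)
The plan is to obtain this as a direct consequence of Lemmas \ref{lem:OddJustShifted} and \ref{lem:Relr1}, reducing the negative odd case to the case $r=1$ that is already settled. First, since $r$ is odd and $\gamma_i=0$ for all $i\le -r$, Lemma \ref{lem:OddJustShifted} tells us that $\varphi\in\mathcal{F}_r$ if and only if the power series $\overline{\varphi}(x):=x^{r-1}\varphi(x)\in\mathcal{F}_1$. Writing $\overline{\varphi}(x)=\sum_{i=0}^\infty\overline{\gamma}_i x^i$, the shift by $x^{r-1}$ records the relation $\overline{\gamma}_i=\gamma_{i-r+1}$, and the vanishing hypothesis is exactly what guarantees $\overline{\gamma}_i=0$ for $i<0$, i.e. that $\overline{\varphi}$ is a bona fide power series with no principal part.

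Next I would invoke Lemma \ref{lem:Relr1}: membership $\overline{\varphi}\in\mathcal{F}_1$ is equivalent to the relations $\sum_{i=0}^m(-1)^i\binom{2m-1}{m-i}\binom{m+i}{i}\overline{\gamma}_{m+i-1}=0$ for every $m\ge 1$. Substituting $\overline{\gamma}_{m+i-1}=\gamma_{(m+i-1)-r+1}=\gamma_{m-r+i}$ turns these into $\sum_{i=0}^m(-1)^i\binom{2m-1}{m-i}\binom{m+i}{i}\gamma_{m-r+i}=0$, which already has precisely the index range $0\le i\le m$ and the same arguments $\gamma_{m-r+i}$ as the target Equation \eqref{eq:DegreeNegativeOdd}.

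It then remains only to match the two families of coefficients up to an $i$-independent scalar, which is permitted since (as noted after Theorem \ref{thrm:Relations}) the relations are determined only up to an overall scaling of each row $m$. Concretely, I would verify the elementary identity $\binom{2m-1}{m-i}\binom{m+i}{i}=\binom{2m-1}{m}\binom{m}{i}\tfrac{m+i}{m}$. Expanding both sides into factorials and cancelling $i!$ and $(m-i)!$, this reduces to $\tfrac{(m+i)!}{(m-1+i)!}=m+i$, so the quotient of the two coefficient families is the constant $\binom{2m-1}{m}$, which is nonzero. Dividing the relation of the previous paragraph by this scalar yields exactly Equation \eqref{eq:DegreeNegativeOdd}, establishing the equivalence.

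I do not anticipate a genuine obstacle here: the substance is entirely packaged in the two preceding lemmas, and the only computation is the binomial identity above, which is a one-line factorial manipulation. The single point requiring care is the bookkeeping of the index shift $\overline{\gamma}_i=\gamma_{i-r+1}$ together with the hypothesis $\gamma_i=0$ for $i\le -r$, to be certain that $\overline{\varphi}$ is an honest power series so that Lemma \ref{lem:Relr1} applies verbatim.
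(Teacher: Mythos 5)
Your proposal is correct and follows essentially the same route as the paper: reduce via Lemma \ref{lem:OddJustShifted} to the case $r=1$ settled by Lemma \ref{lem:Relr1}, reindex with $\overline{\gamma}_i=\gamma_{i-r+1}$, and rescale each row. Your scalar $\binom{2m-1}{m}$ is exactly the paper's factor $m!(m-1)!/(2m-1)!$ in reciprocal form, and your binomial identity checks out.
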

\begin{proof}
Define $\overline{\varphi}(x) = x^{r-1}\varphi(x)  = \sum_{i=0}^\infty \overline{\gamma}_i x^i$.
Specializing Equation \eqref{eq:DegreePositiveOdd} for the $\overline{\gamma}_i$ to the case $r = 1$,
we have
\[
    \sum\limits_{i=0}^{m} (-1)^i {2m - 1 \choose m - i}{m+i \choose i} \overline{\gamma}_{m + i - 1}
    = 0, \quad\quad m\geq 1.
\]
Using $\overline{\gamma}_i = \gamma_{i-r+1}$ and multiplying both sides of the equation
by the constant $m!(m-1)!/(2m-1)!$, this is equivalent to Equation \eqref{eq:DegreeNegativeOdd},
which along with Lemma~\ref{lem:OddJustShifted} completes the proof.
\end{proof}

With this, Theorem \ref{thrm:Relations} follows when, in the case $r < 0$, it is assumed that
$\gamma_i = 0$ for $i \leq -r$. However, note that each $\mathcal{F}_r$ is obviously closed under addition,
and that a series $\varphi(x) = \sum_{i= 0}^\infty \gamma_i x^i$ is an element of $\mathcal{F}_r$ iff it is of the form
$\varphi(x)=\sum_{i= 0}^\infty \delta_i\, x^{2i}\,(x-2)^{-r-2i}$ for $\delta_i \in \K$,
cf. Equation \eqref{eq:membF_r}. In particular, $\varphi(x) = \varphi_1(x)+\varphi_2(x)$ decomposes naturally
with
\[
\varphi_1(x)=\sum_{i= 0}^{-r} \gamma_i x^i =\sum_{i= 0}^{\lfloor -r/2\rfloor} \delta_i\, x^{2i}\,(x-2)^{-r-2i} \in \mathcal{F}_r
\]
a polynomial of degree at most $-r$ and
\[
\varphi_2(x)=\sum_{i= 1-r}^\infty \gamma_i x^i =\sum_{i= \lfloor -r/2\rfloor +1}^\infty \delta_i\, x^{2i}\,(x-2)^{-r-2i} \in \mathcal{F}_r.
\]
The same observation holds for power series satisfying the relations given in Theorem \ref{thrm:Relations}. Thus,
for the remainder of the proof, it is sufficient to demonstrate that, when $r < 0$, a polynomial of degree at most $-r$ is an element
of $\mathcal{F}_r$ if and only if its coefficients satisfy Equation \eqref{eq:DegreeAnyFirst}
for $1 \leq m \leq \lceil -r/2\rceil$.

% xxxxxxxxxxxxxxxxxxxxxxxxxxxxxxxxxxxxxxxxxxxxxxxxxxxxxxxxxxxxxxxxxxxxxxxxx
% xxxxxxxxxxxxxxxxxxxxxxxxxxxxxxxxxxxxxxxxxxxxxxxxxxxxxxxxxxxxxxxxxxxxxxxxx
\subsection{Equation \eqref{eq:DegreeAnyFirst}}
\label{subsec:ProofFirstRelat}

We now fix $r < 0$ and let $f(x) = \sum_{i=0}^{-r} \gamma_i x^i$ be a polynomial of degree at most $-r$.
The goal of this section is to demonstrate that $f(x) \in \mathcal{F}_r$ if and only if
the $\gamma_i$ satisfy Equation \eqref{eq:DegreeAnyFirst} for $1 \leq m \leq \lceil -r/2\rceil$.
Using the description of $\mathcal{F}_r$ given by Theorems \ref{thrm:Deg0HHS} and \ref{thrm:LaurentPoly}
and the generator given in Equation \eqref{eq:xox-2sqr},
we reformulate this statement into the following.

\begin{proposition}
\label{prop:NegFirstReformulate}
Assume $r < 0$ and let $f(x) = \sum_{i=0}^{-r}\gamma_i x^i$. Then the $\gamma_i$ satisfy
Equation \eqref{eq:DegreeAnyFirst} for $1 \leq m \leq\lceil -r/2\rceil$ if and only if
we may express
\begin{equation}
\label{eq:NegFirstReformulate}
    f(x)    =   (x - 2)^{-r} \sum\limits_{i=0}^{\lfloor -r/2 \rfloor} \delta_i
                                \left(\frac{x}{x-2}\right)^{2i}
            =   x^{-r} \sum\limits_{i=0}^{\lfloor -r/2 \rfloor} \delta_i
                                \left(\frac{x-2}{x}\right)^{-r-2i}
\end{equation}
for $\delta_i \in \K$.
\end{proposition}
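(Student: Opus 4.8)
The plan is to reduce the statement to a question about a finite-dimensional space of polynomials and then settle it by a dimension count. Write $s = -r > 0$ and regard $f = \sum_{i=0}^s \gamma_i x^i$ as an element of the $(s+1)$-dimensional space $\K[x]_{\le s}$ of polynomials of degree at most $s$. By Theorems \ref{thrm:Deg0HHS} and \ref{thrm:LaurentPoly}, the expansion \eqref{eq:membF_r}, and the decomposition in the final part of Subsection \ref{subsec:ProofInfiniteRelat}, such a polynomial admits the representation \eqref{eq:NegFirstReformulate} if and only if it lies in $\mathcal{F}_r$. The proposition thus reduces to the claim that, for $f\in\K[x]_{\le s}$, membership $f\in\mathcal{F}_r$ is equivalent to the vanishing of the functionals $L_m(f) := \sum_{i=0}^m \binom{m}{i}\binom{1-r}{m+i}^{-1}\gamma_{m+i-1}$ for $1\le m\le\lceil s/2\rceil$. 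I would prove this by setting $V := \mathcal{F}_r\cap\K[x]_{\le s}$, computing $\dim V$, checking that the $L_m$ are linearly independent, and establishing the single inclusion $V\subseteq\bigcap_m\ker L_m$; equal dimensions then force equality and give both implications simultaneously.

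The device I would use to organize the computation is the substitution $u = x/(x-2)$. The M\"obius involution $\sigma\colon x\mapsto x/(x-1)$ of \eqref{eq: power series functional equation} fixes $0$ and $2$, and a direct check gives $u\circ\sigma = -u$; since $u = -x/2 + O(x^2)$ is an invertible change of variable, Theorem \ref{thrm:Deg0HHS} together with the generator \eqref{eq:xox-2sqr} identifies $\mathcal{F}_0$ with the even power series in $u$. As $g_{-1} = x-2$ is invertible and lies in $\mathcal{F}_{-1}$, we have $f\in\mathcal{F}_r$ if and only if $h := f/(x-2)^s\in\mathcal{F}_0$. Expanding $h$ as a polynomial of degree $s$ in $u$, the requirement that $h$ be even in $u$ amounts exactly to the vanishing of its $u^p$-coefficients for the odd $p$ with $1\le p\le s$, which after clearing constants reads $\sum_{i=0}^p 2^i\binom{s-i}{p-i}\gamma_i = 0$. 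These are $\lceil s/2\rceil$ conditions; since the coefficient of $\gamma_p$ is $2^p\ne0$ and no higher-indexed $\gamma$ appears, they are triangular and hence independent, so $\dim V = (s+1) - \lceil s/2\rceil = \lfloor s/2\rfloor + 1$.

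I would next observe that $L_1,\dots,L_{\lceil s/2\rceil}$ are independent for the same triangular reason: in $L_m$ the term $i = m$ contributes $\binom{1-r}{2m}^{-1}\gamma_{2m-1}\ne0$, so $\gamma_{2m-1}$ is the highest-indexed coefficient appearing in $L_m$, and for $m\le\lceil s/2\rceil$ these top indices are distinct and lie in $\{0,\dots,s\}$. Consequently $\bigcap_m\ker L_m$ also has dimension $\lfloor s/2\rfloor + 1$, and it remains only to prove the inclusion $V\subseteq\bigcap_m\ker L_m$, i.e.\ that each $L_m$ annihilates $V$.

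This inclusion is where I expect the real work to lie. I would prove it either by substituting the coefficients $\gamma_n = (-2)^{s-n}\binom{s-2i_0}{n-2i_0}$ of the spanning polynomials $x^{2i_0}(x-2)^{s-2i_0}$ from \eqref{eq:NegFirstReformulate} (for $0\le i_0\le\lfloor s/2\rfloor$) directly into $L_m$, or, equivalently, by exhibiting $L_m$ as a linear combination of the triangular constraints $\sum_i 2^i\binom{s-i}{p-i}\gamma_i$ over odd $p\le 2m-1$. Either way the claim reduces to the vanishing, for $1\le m\le\lceil s/2\rceil$, of the terminating binomial sum
\[
    \sum_{i=0}^m \binom{m}{i}\binom{1-r}{m+i}^{-1}\binom{s-2i_0}{m+i-1-2i_0}(-2)^{s-m+1-i} = 0.
\]
To establish this I would rewrite $\binom{1-r}{m+i}^{-1} = (s+2)\int_0^1 t^{m+i}(1-t)^{s+1-m-i}\,dt$, turning $L_m$ into a single Beta-type integral, or treat the sum as a terminating hypergeometric series amenable to a standard summation or to creative telescoping. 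Verifying that this sum vanishes for $m\le\lceil s/2\rceil$ is the main obstacle of the proof.
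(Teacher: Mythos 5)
Your scaffolding is correct and is, in essence, the paper's own: the paper likewise argues that the constraints \eqref{eq:DegreeAnyFirst} cut out a space of dimension $\lfloor -r/2\rfloor+1$ (by the same triangularity you note, each relation having nonzero leading coefficient ${1-r\choose 2m}^{-1}$ on $\gamma_{2m-1}$), that the representations \eqref{eq:NegFirstReformulate} form a space of the same dimension with basis $g_{i_0}(x)=x^{2i_0}(x-2)^{-r-2i_0}$, and that it therefore suffices to check that each $g_{i_0}$ satisfies each relation. Your identification of $\mathcal F_0$ with the even power series in $u=x/(x-2)$ (via $u\circ\sigma=-u$ and the generator \eqref{eq:xox-2sqr}) is a correct variant route to $\dim\left(\mathcal F_r\cap\K[x]_{\le -r}\right)=\lfloor -r/2\rfloor+1$, which the paper instead reads off from \eqref{eq:membF_r}; this difference is cosmetic.

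The genuine gap is exactly the step you flag as ``the main obstacle'': the vanishing of your displayed sum is, after clearing the factorials hidden in ${1-r\choose m+i}^{-1}$, precisely the paper's Lemma \ref{lem:NegFirstBinomIdentity},
\[
    \sum\limits_{i=2i_0-m+1}^{m}{m\choose i}\frac{(m+i)!}{(m+i-2i_0-1)!}\,(-2)^{-i}=0,
\]
and this identity is the entire analytic content of the proof of Proposition \ref{prop:NegFirstReformulate}; everything before it is bookkeeping, so your proposal stops where the real argument begins. Moreover, the candidate methods you name are not routine here. In hypergeometric form the sum is proportional to ${}_2F_1\left(-m,\,m+1;\,m-2i_0;\,\tfrac12\right)$, which is of Bailey's type ${}_2F_1(a,1-a;c;\tfrac12)$ and is formally annihilated by the factor $1/\Gamma(-i_0)$ in Bailey's evaluation; but in the range $m\le 2i_0$, which genuinely occurs, the lower parameter $m-2i_0$ is a nonpositive integer (vanishing denominator Pochhammers cancelling vanishing numerator ones is exactly why the sum effectively starts at $i=2i_0-m+1$), so quoting the standard summation theorem requires a limiting or truncation argument you would still have to supply. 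The Beta-integral rewriting, for its part, leaves an inner Jacobi-type polynomial sum with no closed form, so it does not finish either. The paper closes the gap with a short elementary trick: up to a nonzero constant and the monomial factor $x^{2i_0-m+1}$, your sum is the derivative $\frac{d^{2i_0+1}}{dx^{2i_0+1}}\left[(x^2+x)^m\right]$ evaluated at $x=-\tfrac12$, which vanishes because $(x^2+x)^m=\left[\frac{(2x+1)^2-1}{4}\right]^m$ is even in $2x+1$, so every odd-order derivative is odd in $2x+1$ and dies at the symmetry point. Until you supply an argument of this kind, or rigorously execute one of your candidates in the degenerate-parameter regime, the proof is incomplete at its core.
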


To prove Proposition \ref{prop:NegFirstReformulate}, we will first need an auxiliary result.

\begin{lemma}
\label{lem:NegFirstBinomIdentity}
For $m,n\geq 0$, we have
\begin{equation}
\label{eq:NegFirstBinomIdentity}
    \sum\limits_{i=2n-m+1}^m {m \choose i} \frac{(m + i)!}{(m+i-2n-1)!}\:(-2)^{-i} = 0.
\end{equation}
\end{lemma}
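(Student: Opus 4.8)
The plan is to interpret the sum as a single high-order derivative of an explicit polynomial evaluated at a point, and then exploit a symmetry of that polynomial. First I would clean up the summation range. Since the falling Pochhammer factor can be written as
\[
    \frac{(m+i)!}{(m+i-2n-1)!} = (m+i)_{2n+1} = (m+i)(m+i-1)\cdots(m+i-2n),
\]
a product of $2n+1$ consecutive integers, it vanishes whenever $0 \le m+i \le 2n$, i.e. whenever $i \le 2n-m$. Together with the convention that $\binom{m}{i}=0$ for $i<0$ or $i>m$, this lets me extend the sum in Equation \eqref{eq:NegFirstBinomIdentity} to $i$ ranging over $0,\dots,m$ (or indeed all of $\Z$) without changing its value, so that it suffices to prove
\[
    S := \sum_{i=0}^m \binom{m}{i}(-2)^{-i}\,(m+i)_{2n+1} = 0.
\]

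The key step is to recognize $(m+i)_{2n+1}$ as the effect of differentiating a monomial $2n+1$ times. I would introduce the auxiliary polynomial
\[
    F(x) := \sum_{i=0}^m \binom{m}{i}(-2)^{-i}\, x^{m+i},
\]
so that $\frac{d^{2n+1}}{dx^{2n+1}}\big[x^{m+i}\big] = (m+i)_{2n+1}\,x^{m+i-2n-1}$ and hence, evaluating at $x=1$,
\[
    \frac{d^{2n+1}}{dx^{2n+1}}F(x)\bigg|_{x=1} = S.
\]
The point of this reformulation is that $F$ admits a closed form: factoring out $x^m$ and applying the binomial theorem gives
\[
    F(x) = x^m\Big(1 - \tfrac{x}{2}\Big)^m = \frac{x^m(2-x)^m}{2^m} = \frac{\big(1-(x-1)^2\big)^m}{2^m},
\]
where the last equality uses $x(2-x) = 1-(x-1)^2$.

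The final step is the symmetry observation. Setting $u = x-1$, the function $F$ becomes $2^{-m}(1-u^2)^m$, which is an \emph{even} function of $u$; its Taylor expansion about $u=0$ contains only even powers of $u$. Consequently every odd-order derivative of $F$ vanishes at $u=0$, that is, at $x=1$. Since $2n+1$ is odd, this forces $S = 0$, which is exactly the claim. I do not anticipate a serious obstacle here: the main thing to verify carefully is that truncating the falling factorial justifies treating $S$ as the honest $(2n+1)$-th derivative of the finite sum $F$ at $x=1$, after which the even symmetry of $(1-u^2)^m$ delivers the vanishing immediately.
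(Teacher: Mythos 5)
Your proof is correct and is essentially the paper's own argument in a different parametrization: the paper differentiates $(x^2+x)^m$, which is even in $2x+1$, a total of $2n+1$ times and evaluates at $x=-1/2$, while you absorb the weights $(-2)^{-i}$ into $F(x)=x^m(2-x)^m/2^m=(1-(x-1)^2)^m/2^m$, even about $x=1$, and evaluate the odd-order derivative there; the two polynomials are related by the affine substitution $x\mapsto -x/2$. Your explicit justification for extending the summation range via the vanishing of the falling factorial is a careful touch, but the underlying idea is identical.
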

\begin{proof}
Differentiating
\[
    (x^2+x)^{m}   = (x+1)^{m} x^{m} = \sum\limits_{i=0}^m {m\choose i} x^{m+i}
\]
yields
\begin{align*}
    \frac{d^{2n+1}}{dx^{2n+1}} \left[ (x^2+x)^{m} \right] &= \sum\limits_{i=0}^m {m\choose i} (m+i)_{2n+1} x^{m-2n+i-1}\\
    &= \sum\limits_{i=2n-m+1}^m {m \choose i} \frac{(m + i)!}{(m+i-2n-1)!}\:x^{m-2n+i-1}
\end{align*}
and
\begin{equation}
\label{eq:NegFirstBinomIdentity2}
    \sum\limits_{i=2n-m+1}^m {m \choose i} \frac{(m + i)!}{(m+i-2n-1)!}\:x^{i}=x^{2n-m+1}\frac{d^{2n+1}}{dx^{2n+1}} \left[ (x^2+x)^{m} \right].
\end{equation}
Note that
\[
    (x^2+x)^{m}   = \left[ \frac{(2x+1)^2-1}4 \right]^m
\]
is an even function in $(2x+1)$. Thus, the derivative on the right side of Equation~\eqref{eq:NegFirstBinomIdentity2} is an odd function
in $(2x+1)$ and must vanish when substituting $x= -1/2$. The result is Equation \eqref{eq:NegFirstBinomIdentity}.
\end{proof}

With this, we are ready to prove the main result of this subsection.

\begin{proof}[Proof of Proposition \ref{prop:NegFirstReformulate}]
It is easy to check that for each arbitrary choice of values $\gamma_{2i}$ $(0\le 2i \le -r)$, there is a unique
$f(x) = \sum_{i=0}^{-r}\gamma_i x^i$ satisfying Equation \eqref{eq:DegreeAnyFirst} for $1 \leq m \leq\lceil -r/2\rceil$.
In particular, Equation \eqref{eq:DegreeAnyFirst} describes a vector space $V$ of polynomials with $\dim V = \lfloor -r/2 \rfloor +1$.
Note also that Equation \eqref{eq:NegFirstReformulate} describes a vector space $V'$ of polynomials with $\dim V' = \lfloor -r/2 \rfloor +1$
and basis
\begin{equation}
\label{eq:NegFirstBinomBasis}
    g_s(x) = (x-2)^{-r-2s}x^{2s}, \quad 0 \le s \le \lfloor -r/2 \rfloor.
\end{equation}
Hence, for $V=V'$, it is sufficient to show that each of these polynomials satisfies Equation \eqref{eq:DegreeAnyFirst} for $1 \leq m \leq\lceil -r/2\rceil$.

For
\[
g_s(x) = (x-2)^{-r-2s}x^{2s}=\sum_{j=0}^{-r-2s}{-r-2s \choose j} (-2)^{-r-2s-j} x^{2s+j}=\sum_{i=0}^{-r}\gamma_i x^i
\]
we have
\begin{equation}
\label{eq:NegFirstBinomGamma}
    \gamma_i =
        \begin{cases}
            0,                              & i < 2s,
            \\
            {-r-2s \choose i-2s} (-2)^{-r-i},         & i \ge 2s.
\end{cases}
\end{equation}
Substituting Equation \eqref{eq:NegFirstBinomGamma} into Equation \eqref{eq:DegreeAnyFirst} and applying Lemma \ref{lem:NegFirstBinomIdentity} yields
\begin{align*}
    & \sum\limits_{i=0}^{m} {m\choose i} {1-r\choose m+i}^{-1} \gamma_{m+i-1}\\
    & = \sum\limits_{i=2s-m+1}^{m} {m\choose i} {1-r\choose m+i}^{-1}{-r-2s \choose m+i-2s-1} (-2)^{-r-m-i+1}\\
    & = \sum\limits_{i=2s-m+1}^{m} {m\choose i} \frac{(m+i)!(1-r-m-i)!(-r-2s)!}{(1-r)!(m+i-2s-1)!(-r-m-i+1)!}\: (-2)^{-r-m-i+1}\\
    & = \frac{(-r-2s)!}{(1-r)!}\: (-2)^{-r-m+1} \sum\limits_{i=2s-m+1}^{m} {m\choose i} \frac{(m+i)!}{(m+i-2s-1)!}\: (-2)^{-i} = 0.
    \qedhere
\end{align*}
\end{proof}

Theorem \ref{thrm:Relations} now follows from Corollaries \ref{cor:RelPosEven}
and \ref{cor:RelNegOdd}, Lemmas \ref{lem:Relr1} and \ref{lem:RelPosOddg1}, and Proposition
\ref{prop:NegFirstReformulate}.

% xxxxxxxxxxxxxxxxxxxxxxxxxxxxxxxxxxxxxxxxxxxxxxxxxxxxxxxxxxxxxxxxxxxxxxxxx
% xxxxxxxxxxxxxxxxxxxxxxxxxxxxxxxxxxxxxxxxxxxxxxxxxxxxxxxxxxxxxxxxxxxxxxxxx
% xxxxxxxxxxxxxxxxxxxxxxxxxxxxxxxxxxxxxxxxxxxxxxxxxxxxxxxxxxxxxxxxxxxxxxxxx

\section{Relation to Euler polynomials and Bernoulli numbers}
\label{sec:EulerPoly}

In this section, we give two additional characterizations of elements of
$\mathcal{F}_r$ in terms of their coefficients. The initial observation motivating
these characterizations is the following corollary to Theorem \ref{thrm:Relations},
which is easily proven by induction on $m$ for each case of the parity and sign of $r$.

\begin{corollary}
\label{cor:OddFromEvBasic}
Let $r$ be an integer. For each arbitrary choice $\{\gamma_{2i}\}_{i=0}^\infty \subset \K$,
there is a unique choice of $\gamma_{2i+1}\in\K$ for each $i$ such that
$\varphi(x)=\sum_{i= 0}^\infty \gamma_i x^i\in \mathcal{F}_r$.
\end{corollary}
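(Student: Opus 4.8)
The plan is to restate the claim as the assertion that the linear map $\mathcal{F}_r \to \K^{\N}$, $\varphi \mapsto (\gamma_0,\gamma_2,\gamma_4,\dots)$ recording the even-indexed coefficients, is a bijection. By Theorem \ref{thrm:Relations}, membership $\varphi \in \mathcal{F}_r$ is equivalent to the relations listed there, so it suffices to show that, once the even coefficients are prescribed, these relations determine the odd coefficients uniquely and consistently. The engine of the argument is the observation that, in each of the four cases, the relation indexed by a given $m$ has as its term of largest index an \emph{odd}-indexed coefficient, occurring with a nonzero scalar; this makes the whole system triangular in the odd unknowns.

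Concretely, I would first record the leading term of each relation. In \eqref{eq:DegreePositiveEven} the term of largest index is $(-1)^{m-1}\gamma_{2m-r-1}$, in \eqref{eq:DegreePositiveOdd} it is $(-1)^m\binom{2m}{m}\gamma_{2m-1}$, in \eqref{eq:DegreeAnyFirst} it is $\binom{1-r}{2m}^{-1}\gamma_{2m-1}$, and in \eqref{eq:DegreeNegativeOdd} it is $2(-1)^m\gamma_{2m-r}$. Since $r$ has fixed parity in each case, each of these leading indices is odd, and each scalar is visibly nonzero; for \eqref{eq:DegreeAnyFirst} this requires only that $\binom{1-r}{2m}$ not vanish, which holds because $2m \le 2\lceil -r/2\rceil \le 1-r$ in the stated range of $m$. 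Hence each relation can be solved for its leading odd coefficient in terms of the prescribed even coefficients together with odd coefficients of strictly smaller index.

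Next I would check that, as $m$ ranges over its allowed values, these leading odd indices run through every positive odd integer exactly once. For $r\ge 0$ even this is immediate from \eqref{eq:DegreePositiveEven}, and for $r>0$ odd from \eqref{eq:DegreePositiveOdd}. For $r<0$ the odd indices split into a low block handled by \eqref{eq:DegreeAnyFirst} (reaching $\gamma_{-r-1}$ when $r$ is even, $\gamma_{-r}$ when $r$ is odd) and a high block handled by \eqref{eq:DegreePositiveEven} (starting at $\gamma_{1-r}$) or by \eqref{eq:DegreeNegativeOdd} (starting at $\gamma_{2-r}$); since $-r$ is even in the first subcase and odd in the second, the two blocks are disjoint and abut exactly, covering all positive odds. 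With this bijection between relations and odd indices in hand, uniqueness follows by induction on the odd index: each odd coefficient is forced by its own relation once the lower odd ones and all even ones are known. Existence follows because defining the odd coefficients by this recursion satisfies every relation simultaneously, whence $\varphi \in \mathcal{F}_r$ by Theorem \ref{thrm:Relations}; and the even coefficients are genuinely free because no relation has an even-indexed leading term.

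The only real obstacle is the bookkeeping in the case $r<0$: one must verify that the two families of relations neither overlap nor leave a gap at the transition index ($-r$ when $r$ is odd, between $-r-1$ and $-r+1$ when $r$ is even), and that the relations \eqref{eq:DegreePositiveEven} or \eqref{eq:DegreeNegativeOdd} involve only coefficients of index $\ge 1-r$ so that they decouple cleanly from \eqref{eq:DegreeAnyFirst}. Both points are exactly the index estimates already recorded at the start of Section \ref{sec:ProofRelations}, so the verification is routine.
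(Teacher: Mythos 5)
Your proposal is correct and follows essentially the same route as the paper, which proves Corollary \ref{cor:OddFromEvBasic} by exactly this induction on $m$ in each case of the sign and parity of $r$, using the triangular structure of the relations in Theorem \ref{thrm:Relations} (the paper merely asserts this in one line, while you supply the leading-term and index-coverage bookkeeping explicitly). Your identified leading coefficients $(-1)^{m-1}$, $(-1)^m\binom{2m}{m}$, $\binom{1-r}{2m}^{-1}$, and $2(-1)^m$ are all correct, as is the matching of the two blocks of odd indices at the transition when $r<0$.
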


Hence, for an element of $\mathcal{F}_r$, the $\gamma_{2i+1}$ are completely determined
by the $\gamma_{2i}$. The goal of this section is to indicate explicitly how to compute
the odd-degree coefficients from the even, Theorem \ref{thrm:OddFromEven},
and similarly how to compute the even-degree coefficients from the odd,
Theorem \ref{thrm:EvenFromOdd}. These pleasant relationships are stated most succinctly
in terms of the Bernoulli numbers and the coefficients of the Euler polynomials.
First, we recall the definitions of these constants and establish the properties we will need.

For $n = 0, 1, 2, \ldots$, let $E_n(x)$ denote the $n$th Euler polynomial defined by
the generating function
\begin{equation}
\label{eq:EulerGenerating}
    \frac{2e^{xt}}{e^t + 1}
    =
    \sum\limits_{n=0}^\infty E_n(x) \frac{t^n}{n!}.
\end{equation}
As an obvious consequence of this equation, we have
\begin{equation}
\label{eq:EulerPolySum}
     E_n(x+1)+E_n(x)=2x^n.
\end{equation}
Observe that when $n$ is even (respectively odd), the only nonzero even degree
(respectively odd degree) term in $E_n(x)$ is the leading monomial $x^n$. We
use the notation ${n\brack i}$ to denote the (negatives of the) odd degree
coefficients of $E_{2n}(x)$ and ${n \brace i}$ to denote the
(negatives of the) even degree coefficients of $E_{2n+1}(x)$, i.e
\begin{equation}
\label{eq:EulerDefBracks}
    \sum\limits_{i=1}^n {n\brack i}x^{2i-1} = x^{2n} - E_{2n}(x)
    \quad\mbox{and}\quad
    \sum\limits_{i=0}^n {n\brace i}x^{2i} = x^{2n+1} - E_{2n+1}(x).
\end{equation}
Note that the ${n\brack i}$ are integers while the ${n\brace i}$ are generally
rational, ${n\brack i} = 0$ for $i \leq 0$ or $i > n$, and
${n\brace i} = 0$ for $i < 0$ or $i > n$. Due to this and other differences
between the properties of the ${n\brack i}$ and ${n\brace i}$, it will usually
be simplest to distinguish the even and odd cases with this notation. However,
it will sometimes be convenient to use the \emph{unified notation}
\begin{equation}
\label{unified notation}
    {n \choose i}_r :=
        \begin{cases}
            {r/2+n\brack r/2+i},            &\mbox{$r$ is even and
                                            $n\ge \operatorname{max}\{0,-r/2\}$,}
            \\
            -{-r/2-i\brack -r/2-n},         &\mbox{$r$ is even and
                                            $0\le n<-r/2$,}
            \\
            {(r-1)/2+n\brace (r-1)/2+i},    &\mbox{$r$ is odd and
                                            $n\ge \operatorname{max}\{0,-(r-1)/2\}$,}
            \\
            -{-(r+1)/2-i\brace -(r+1)/2-n}, &\mbox{$r$ is odd and
                                            $0\le n<-(r-1)/2$.}
\end{cases}
\end{equation}

We use $B_n$ to denote the $n$th Bernoulli number, given by the generating function
\[
    \frac{t}{e^t - 1} = \sum\limits_{n=0}^\infty B_n \frac{t^n}{n!}.
\]
The Bernoulli numbers are related to the even Euler polynomial coefficients via
\begin{equation}
\label{eq:EulerBernoulli-Even}
    {n\brack i}
    =    \frac{4^{n-i+1}-1}{n-i+1}B_{2(n-i+1)}{2n\choose 2i-1},
    \quad\quad 0\leq i \leq n.
\end{equation}

By the obvious consequence $\frac{d}{dx} E_{n+1}(x) = (n+1)E_n(x)$ of Equation
\eqref{eq:EulerGenerating}, we have
\begin{equation}
\label{eq:EulerEvenToOdd}
    \frac{2i+1}{2n+2}{n+1\brack i+1} = {n \brace i},
    \quad\quad 0\leq i \leq n,
\end{equation}
and hence
\begin{equation}
\label{eq:EulerBernoulli-Odd}
    {n\brace i}
    =    \frac{4^{n-i+1}-1}{n-i+1}B_{2(n-i+1)}{2n+1\choose 2i},
    \quad\quad 0\leq i \leq n.
\end{equation}
Differentiating twice yields the useful corollary
\begin{equation}
\label{eq:EvenEulerRecursion}
    {n \brack i}
        =   \frac{(2n)(2n-1)}{(2i-1)(2i-2)}{n-1 \brack i-1}
        =   \frac{{2n\choose 2}}{{2i-1\choose 2}}{n-1 \brack i-1},
        \quad\quad 2\leq i \leq n.
\end{equation}

In order to express the unified notation of Equation~\eqref{unified notation} with the help of
Equations \eqref{eq:EulerBernoulli-Even} and \eqref{eq:EulerBernoulli-Odd}, we define
\[
    f_m:= \frac{4^{m}-1}{m} B_{2m}
\]
and observe
\begin{equation}
\label{simplified cases}
    {n \choose i}_r     =   f_{n-i+1}\cdot
        \begin{cases}
            {r+2n\choose r+2i-1},       &\mbox{$n\ge\operatorname{max}\{0,\lceil-r/2\rceil\}$,}
            \\
            -{-r-2i\choose -r-2n-1},    &\mbox{$0\le n<\lceil-r/2\rceil$.}
        \end{cases}
\end{equation}

We will also make use of the following.

\begin{lemma}
\label{lem:EvenFromOdd-SumForm}
For integers $0 \leq j \leq n$, we have
\begin{equation}
\label{eq:EvenFromOdd-SumForm}
    \sum\limits_{i=j}^n {2n \choose 2i} B_{2n-2i} {i\brack j}
    =
    \begin{cases}
        0,  &   j < n, \\
        n,  &   j = n.
    \end{cases}
\end{equation}
\end{lemma}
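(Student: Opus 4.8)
The plan is to prove the identity in Lemma~\ref{lem:EvenFromOdd-SumForm} by interpreting the sum as a composition of the two linear operations that relate the monomials $x^{2i}$ to the odd Euler polynomial coefficients on the one hand and to the Bernoulli-number weights on the other. Concretely, using the relation \eqref{eq:EulerBernoulli-Even}, namely ${i\brack j} = \frac{4^{i-j+1}-1}{i-j+1}B_{2(i-j+1)}{2i\choose 2j-1}$, the coefficients ${i\brack j}$ are exactly those expressing $x^{2i}-E_{2i}(x)$ in terms of odd powers. I would therefore first set up a generating-function encoding: multiply the left-hand side by a suitable monomial or power of $t$ and sum over $n$, converting both ${2n\choose 2i}B_{2n-2i}$ and ${i\brack j}$ into their exponential generating functions via $\frac{t}{e^t-1}=\sum B_n t^n/n!$ and the defining relation \eqref{eq:EulerGenerating}.

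The cleaner route, which I would carry out first, is to recognize the inner factor ${2n\choose 2i}B_{2n-2i}$ as the even part of the Bernoulli convolution kernel. I would use the classical relation between Bernoulli numbers and the Euler polynomials — differentiating \eqref{eq:EulerPolySum} or exploiting \eqref{eq:EulerGenerating} together with $\frac{t}{e^t-1}$ — to show that the operator ``multiply by $\sum_k {2n\choose 2k}B_{2n-2k}(\cdot)$'' inverts the operator ``extract the odd Euler coefficients ${n\brack i}$.'' Thus the composite sum $\sum_{i=j}^n {2n\choose 2i}B_{2n-2i}{i\brack j}$ should collapse to a Kronecker-type delta: it is the $(n,j)$-entry of a product of two triangular matrices that are mutually inverse up to the scalar $n$ on the diagonal. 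The right-hand side, $0$ for $j<n$ and $n$ for $j=n$, is precisely this diagonal value, so the key is to identify the correct inverse pair.

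The concrete computation I expect to perform is at the level of exponential generating functions. Encoding the even sequence via $\frac{e^{xt}+e^{-xt}}{2}$ and combining with the Euler generating function \eqref{eq:EulerGenerating}, one obtains on multiplication a product of the form $\frac{t}{e^t-1}\cdot\frac{2e^{xt}}{e^t+1}$-type expressions, and I would extract the coefficient of the appropriate power of $t$ and of $x^{2j}$ (or $x^{2j-1}$). After using \eqref{eq:EulerPolySum} to simplify $E_{2i}(x)+E_{2i}(-x)$ and the parity observation recorded just before \eqref{eq:EulerDefBracks} (that $E_{2i}$ has only its leading even monomial $x^{2i}$ among even-degree terms), the sum should telescope, leaving only the $j=n$ contribution. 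An alternative, fully elementary approach is a direct induction on $n$ using the recursion \eqref{eq:EvenEulerRecursion}, reducing the $n$-sum to the $(n-1)$-sum; I would keep this in reserve in case the generating-function bookkeeping becomes unwieldy.

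The hard part will be handling the two regimes $j<n$ and $j=n$ uniformly and pinning down the exact scalar $n$ on the diagonal, since the factors $\frac{4^{m}-1}{m}B_{2m}$ and the binomial coefficients ${2n\choose 2i}$ must cancel in just the right way for the vanishing to occur and leave the clean value $n$ at $j=n$. In particular, the appearance of the $4^{m}-1$ factors (from \eqref{eq:EulerBernoulli-Even}) rather than plain Bernoulli numbers means the two triangular matrices being composed are not literally inverse to each other, but inverse after a diagonal rescaling; tracking this rescaling correctly through the generating-function identity, and verifying that the off-diagonal entries genuinely vanish for all $j<n$, is where the real care is required.
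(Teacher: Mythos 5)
Your route is viable and genuinely different from the paper's. The paper proves Lemma \ref{lem:EvenFromOdd-SumForm} by quoting the known quadratic Bernoulli identity \eqref{eq:BernMollIdentity} of Moll/Mordell, $\sum_{s=1}^\ell \binom{2\ell}{2s}(2^{2s}-1)B_{2s}B_{2(\ell-s)}=0$ for $\ell\ge 2$, substituting Equation \eqref{eq:EulerBernoulli-Even} for ${i\brack j}$, and reindexing via $n=\ell+j-1$, $i=s+j-1$; the boundary cases are handled directly, since ${i\brack 0}=0$ gives $j=0$, and for $j=n$ only the $i=n$ term survives with ${n\brack n}=n$. Your matrix-composition/EGF plan does close, and closes most cleanly if you work with the polynomials $x^{2i}-E_{2i}(x)$ themselves rather than with the Bernoulli-number formula \eqref{eq:EulerBernoulli-Even} for their coefficients --- done this way, your worry about the $4^m-1$ factors evaporates because they never appear. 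Concretely: the even part in $t$ of $t/(e^t-1)$ is $\tfrac t2\,\tfrac{e^t+1}{e^t-1}$, the even part of $2e^{xt}/(e^t+1)$ is $(e^{xt}+e^{(1-x)t})/(e^t+1)$, and their product is $\tfrac{t\,(e^{xt}+e^{(1-x)t})}{2(e^t-1)}$, so the reflection $B_m(1-x)=(-1)^m B_m(x)$ yields
\[
\sum_{i=0}^n \binom{2n}{2i}B_{2n-2i}\,E_{2i}(x) \;=\; B_{2n}(x),
\qquad
\sum_{i=0}^n \binom{2n}{2i}B_{2n-2i}\,x^{2i} \;=\; B_{2n}(x) + n\,x^{2n-1},
\]
the second equality because the even-index part of $B_{2n}(x)=\sum_k\binom{2n}{k}B_k x^{2n-k}$ differs from $B_{2n}(x)$ only by the $B_1$ term $\binom{2n}{1}B_1x^{2n-1}=-nx^{2n-1}$; this is exactly where your undetermined diagonal scalar $n$ comes from. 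Subtracting and reading off the coefficient of $x^{2j-1}$ (using \eqref{eq:EulerDefBracks}) gives $n\,\delta_{j,n}$ for $j\ge 1$, with $j=0$ trivial, which is the lemma. Comparing the two: the paper's argument is essentially two lines once the Moll--Mordell identity is cited, and indeed the $j<n$ case of the lemma is just a reindexed form of that identity; your argument is self-contained and re-proves Moll--Mordell as a byproduct, at the cost of the EGF bookkeeping. One caution: your fallback induction via Equation \eqref{eq:EvenEulerRecursion} looks doubtful, since that recursion shifts both indices of ${n\brack i}$ simultaneously and does not interact simply with the kernel $\binom{2n}{2i}B_{2n-2i}$; I would not rely on it.
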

\begin{proof}
First note that as ${i\brack 0} = 0$ for all $i$, the case $j = 0$ is clear.
Similarly, by Equation \eqref{eq:EulerBernoulli-Even} and the fact that
$B_2 = 1/6$, one checks that ${n\brack n} = n$ for any $n\geq 0$. Then
the case $j = n$ follows from a simple computation.

Now, for $\ell \geq 2$, we have from \cite[Theorem 13.3.6]{Moll} the identity
\begin{equation}
\label{eq:BernMollIdentity}
    \sum\limits_{s=1}^\ell {2\ell\choose 2s} (2^{2s}-1)B_{2s} B_{2(\ell-s)} = 0,
\end{equation}
see also \cite{Mordell}. Multiplying both sides by a constant in terms of $j \geq 1$,
we have
\begin{align*}
    0 &=
        \frac{2(2\ell+2j-2)!}{(2j-1)! (2\ell)!}
        \sum\limits_{s=1}^\ell {2\ell\choose 2s} (2^{2s}-1)B_{2s} B_{2(\ell-s)}
    \\&=
        \sum\limits_{s=1}^\ell \frac{(2\ell+2j-2)!(2s+2j-2)!(4^s-1)}
            {(2s+2j-2)!(2\ell-2s)!(2j-1)! (2s-1)! s} B_{2s} B_{2(\ell-s)}
    \\&=
        \sum\limits_{s=1}^\ell {2\ell+2j-2\choose 2s+2j-2}{2s+2j-2\choose 2j-1}
            \frac{4^s-1}{s} B_{2s} B_{2(\ell-s)}.
\end{align*}
Setting $n = \ell + j - 1 \geq j+1$ and using Equation \eqref{eq:EulerBernoulli-Even}, we continue
\begin{align*}
    &=
        \sum\limits_{s=1}^{n-j+1} {2n\choose 2s+2j-2}{2s+2j-2\choose 2j-1}
            \frac{4^s-1}{s} B_{2s} B_{2(n-s-j+1)}
    \\&=
        \sum\limits_{s=1}^{n-j+1} {2n\choose 2(s+j-1)} {s+j-1\brack j}
            B_{2(n-s-j+1)},
\end{align*}
and then setting $i = s+j-1$ completes the proof.
\end{proof}

% xxxxxxxxxxxxxxxxxxxxxxxxxxxxxxxxxxxxxxxxxxxxxxxxxxxxxxxxxxxxxxxxxxxxxxxxx

\subsection{Odd coefficients from even: Euler polynomials}
\label{subsec:EulerPoly}

In this subsection, we give explicit formulas for the odd coefficients of
an element of $\mathcal{F}_r$ in terms of the even coefficients, yielding
another characterization of elements of $\mathcal{F}_r$ and indicating the
connection with Euler polynomials. This generalizes \cite[Theorem 5.1]{HHS},
which gave the corresponding result for the case $r=0$.
Specifically, we prove the following.

\begin{theorem}
\label{thrm:OddFromEven}
Let $\varphi(x) = \sum_{i= 0}^\infty \gamma_i x^i$ and let $r\in\Z$.
As in Theorem \ref{thrm:Relations}, we stipulate that $\gamma_i = 0$ for $i < 0$. Then
$\varphi(x) \in\mathcal{F}_r$ if and only if the following conditions
are fulfilled.

If $r = 2k$ is even, then for each $n \geq \max\{0,-k\}$,
\begin{equation}
\label{eq:OddFromEven-Even}
    \gamma_{2n+1} = \sum\limits_{i=-k}^n {n+k\brack i+k}\gamma_{2i},
\end{equation}
and, if $r < 0$ so that $k < 0$, then for $0\leq n\leq -k-1$,
\begin{equation}
\label{eq:OddFromEven-EvenFirst}
    \gamma_{2n+1} = \sum\limits_{i=0}^n -{-k-i\brack -k-n}\gamma_{2i}.
\end{equation}

If $r = 2k+1$ is odd, then for each $n \geq \max\{0,-k\}$,
\begin{equation}
\label{eq:OddFromEven-Odd}
    \gamma_{2n+1} = \sum\limits_{i=-k}^{n} {n+k\brace i+k}\gamma_{2i},
\end{equation}
and, if $r < 0$ so that $k < 0$, then for $0\leq n\leq -k-1$,
\begin{equation}
\label{eq:OddFromEven-OddFirst}
    \gamma_{2n+1} = \sum\limits_{i=0}^n -{-k-i-1\brace -k-n-1}\gamma_{2i}.
\end{equation}
\end{theorem}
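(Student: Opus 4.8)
The plan is to use Corollary~\ref{cor:OddFromEvBasic}, which guarantees that any prescribed choice of even coefficients extends \emph{uniquely} to an element of $\mathcal{F}_r$. Hence it suffices to prove only the forward implication: if $\varphi(x)\in\mathcal{F}_r$, then its coefficients satisfy the stated formulas. The converse then follows formally, since given the even coefficients one lets $\psi\in\mathcal{F}_r$ be the unique series with those even parts; by the forward direction its odd coefficients obey the formulas, so they agree with the prescribed odd coefficients and $\varphi=\psi\in\mathcal{F}_r$. I would then separate the ``high'' coefficients $\gamma_{2n+1}$ with $n\ge\max\{0,-k\}$, which I treat by reducing to the base cases $r=0$ and $r=1$ via the shift Lemmas~\ref{lem:EvenJustShifted} and \ref{lem:OddJustShifted}, from the ``low'' coefficients (present only when $r<0$), which I treat using the explicit polynomial basis of Proposition~\ref{prop:NegFirstReformulate}.

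For the high coefficients in the even case $r=2k$ with $r\ge0$, set $\overline\varphi(x)=x^r\varphi(x)$, so that $\overline\varphi\in\mathcal{F}_0$ by Lemma~\ref{lem:EvenJustShifted} with $\overline\gamma_j=\gamma_{j-2k}$. Applying the $r=0$ formula $\overline\gamma_{2N+1}=\sum_{i=0}^N{N\brack i}\overline\gamma_{2i}$ of \cite[Theorem~5.1]{HHS} and substituting $N=n+k$, $j=i-k$ reindexes this precisely into Equation~\eqref{eq:OddFromEven-Even}, with the terms $j<0$ dropping out because $\gamma_{2j}=0$ for $j<0$. When $r<0$ I would apply the same shift to the high part $\varphi_2=\sum_{i\ge 1-r}\gamma_i x^i\in\mathcal{F}_r$ of the decomposition used in Section~\ref{sec:ProofRelations}; the identical reindexing yields \eqref{eq:OddFromEven-Even} for $n\ge -k$, and here the boundary term $j=-k$ is automatically absent since ${n+k\brack 0}=0$. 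The odd high case is entirely parallel, using $\overline\varphi(x)=x^{r-1}\varphi(x)\in\mathcal{F}_1$ (Lemma~\ref{lem:OddJustShifted}), applied to $\varphi$ when $r\ge1$ and to $\varphi_2$ when $r<0$, and reindexing the $r=1$ formula into \eqref{eq:OddFromEven-Odd}.

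This reduces the odd case to the base case $r=1$, namely $\gamma_{2n+1}=\sum_{i=0}^n{n\brace i}\gamma_{2i}$, which I expect to be the main computational obstacle. Here I would invoke Lemma~\ref{lem:Relr1}: writing $\varphi(x)=\tfrac1x\int_0^x(1-\xi)^{-1}\chi(\xi)\,d\xi$ with $\chi(x)=\sum_i c_i x^i\in\mathcal{F}_0$, termwise integration gives $\gamma_\ell=\tfrac1{\ell+1}\sum_{j=0}^\ell c_j$. Substituting this into the target formula and clearing denominators via ${n\brace i}=\tfrac{2i+1}{2n+2}{n+1\brack i+1}$ from Equation~\eqref{eq:EulerEvenToOdd} turns the claim into the purely Euler-polynomial identity $\sum_{j=0}^{2n+1}c_j=\sum_{i=0}^n{n+1\brack i+1}\sum_{j=0}^{2i}c_j$, required to hold for every $\chi\in\mathcal{F}_0$. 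Using the $r=0$ relation $c_{2m+1}=\sum_{p=0}^m{m\brack p}c_{2p}$ to eliminate the odd $c_j$ reduces this to an identity among the ${n\brack i}$ alone, which I would prove by induction on $n$ using the recursion \eqref{eq:EvenEulerRecursion}, or equivalently the defining identity \eqref{eq:EulerPolySum}.

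Finally, for the low coefficients when $r<0$, Proposition~\ref{prop:NegFirstReformulate} provides the basis $g_s(x)=(x-2)^{-r-2s}x^{2s}$, $0\le s\le\lfloor -r/2\rfloor$, of the degree-at-most-$(-r)$ part of $\mathcal{F}_r$, with coefficients recorded in Equation~\eqref{eq:NegFirstBinomGamma}. As both sides of \eqref{eq:OddFromEven-EvenFirst} and \eqref{eq:OddFromEven-OddFirst} are linear in the $\gamma_{2i}$, it is enough to check these formulas on each $g_s$; substituting \eqref{eq:NegFirstBinomGamma} and expressing the bracket and brace coefficients through Bernoulli numbers via \eqref{eq:EulerBernoulli-Even} and \eqref{eq:EulerBernoulli-Odd} reduces each to a finite binomial identity. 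I anticipate that the genuine difficulty of the whole argument is concentrated in these closing combinatorial verifications, namely the base-case identity above and the per-basis-element identities here, while the shift and reindexing steps are routine bookkeeping.
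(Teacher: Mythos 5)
Your proposal is correct, and its architecture coincides with the paper's own proof in all but one component: like the paper, you obtain the converse implication from the uniqueness statement of Corollary \ref{cor:OddFromEvBasic}, reduce the high coefficients to the base cases $r=0$ and $r=1$ via the shift Lemmas \ref{lem:EvenJustShifted} and \ref{lem:OddJustShifted} (your observation that the boundary term $i=-k$ drops out because ${n+k\brack 0}=0$ is exactly the point the paper relies on), and verify the low formulas on the basis $g_s(x)=(x-2)^{-r-2s}x^{2s}$, which is precisely the paper's Lemma \ref{lem:OddFromEven:First}; the closing ``finite binomial identity'' you anticipate there is Lemma \ref{lem:OddFromEven:FirstAux}, namely $\sum_{i=0}^{n}-{n\brace i}(-2)^{-2i}=(-2)^{-2n-1}$, obtained by evaluating Equations \eqref{eq:EulerPolySum} and \eqref{eq:EulerDefBracks} at $x=-1/2$ --- a modest computation, as you predicted. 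The genuine divergence is the base case $r=1$. The paper differentiates the umbral identity \eqref{eq:EulerEq5.7}, imported from \cite[Equation (5.7)]{HHS} and ultimately from \cite[Theorem 7.4]{GesselUmbral}, applies the functional $\Gamma(x^i)=\gamma_i$, and shows that \eqref{eq:OddFromEven-Odd} with $k=0$ implies the relations \eqref{eq:DegreePositiveOdd} for $r=1$, then invokes Theorem \ref{thrm:Relations} and uniqueness. You instead use the integral representation of Lemma \ref{lem:Relr1} to get $\gamma_\ell=\frac{1}{\ell+1}\sum_{j=0}^{\ell}c_j$ and reduce to the identity $\sum_{j=0}^{2n+1}c_j=\sum_{i=0}^{n}{n+1\brack i+1}\sum_{j=0}^{2i}c_j$ for all $\chi\in\mathcal{F}_0$, which you leave to an unspecified induction; that identity is true, and in fact your deferred induction can be avoided entirely: $\sum_{\ell}(\ell+1)\gamma_\ell x^\ell=\frac{d}{dx}\left[x\varphi(x)\right]=\chi(x)/(1-x)$ lies in $\mathcal{F}_2$, because $1/(1-x)\in\mathcal{F}_2$ (direct check of Equation \eqref{eq: power series functional equation}) and $\mathcal{F}$ is graded by Proposition \ref{prop:GradedAlgebra}, so applying the even case \eqref{eq:OddFromEven-Even} with $k=1$ --- which your argument establishes independently of the odd case, so no circularity arises --- yields $(2n+2)\gamma_{2n+1}=\sum_{i=0}^{n}{n+1\brack i+1}(2i+1)\gamma_{2i}$, and Equation \eqref{eq:EulerEvenToOdd} turns this into \eqref{eq:OddFromEven-Odd} for $r=1$. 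Your route buys independence from the external umbral machinery of \cite{HHS,GesselUmbral} at the cost of the detour through the integral representation of Lemma \ref{lem:Relr1}; the paper's route buys a shorter derivation given that \eqref{eq:EulerEq5.7} was already available from the earlier work.
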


\begin{remark}
\label{rem:thrmOddFromEvenUnified}
Note that in Equations \eqref{eq:OddFromEven-Even} and \eqref{eq:OddFromEven-Odd},
the sum can be taken over all integers $i$ as the coefficient vanishes unless
$-k \leq i \leq n$. Hence, using the unified notation of Equation \eqref{unified notation},
the conditions expressed by Equations \eqref{eq:OddFromEven-Even}, \eqref{eq:OddFromEven-EvenFirst},
\eqref{eq:OddFromEven-Odd}, and \eqref{eq:OddFromEven-OddFirst} in Theorem \ref{thrm:OddFromEven}
can be stated succinctly as: for every $n \geq 0$,
\begin{equation}
\label{eq:thrmOddFromEvenUnified}
    \gamma_{2n+1} = \sum\limits_{i=0}^n {n \choose i}_r \gamma_{2i}.
\end{equation}
\end{remark}

To prove Theorem \ref{thrm:OddFromEven}, we observe that by Corollary
\ref{cor:OddFromEvBasic}, for any value of $r$, either direction of the
biconditional in Theorem \ref{thrm:OddFromEven} follows from the opposite implication.
Note further that when $r < 0$, just as in the statement of Theorem \ref{thrm:Relations},
the $\gamma_i$ for $i \leq -r$ do not contribute to
Equations \eqref{eq:OddFromEven-Even} and \eqref{eq:OddFromEven-Odd},
while the $\gamma_i$ for $i > -r$ do not appear in Equations
\eqref{eq:OddFromEven-EvenFirst} and \eqref{eq:OddFromEven-OddFirst}.
Hence, as in Section \ref{sec:ProofRelations}, we first assume that in the case $r < 0$,
the $\gamma_i$ vanish for $i \leq -r$. We begin with the case of $r$ even.

\begin{lemma}
\label{lem:OddFromEven:Even}
Let $\varphi(x) = \sum_{i= 0}^\infty \gamma_i x^i \in \K[\![x]\!]$ and let $r=2k$
be an even integer. If $r < 0$, assume that $\gamma_i = 0$ for each $i \leq -r$.
Then the $\gamma_i$ satisfy Equation \eqref{eq:OddFromEven-Even} for
each $n \geq \max\{0,-k\}$ if and only if $\varphi(x) \in \mathcal{F}_r$.
\end{lemma}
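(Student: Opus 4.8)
The plan is to reduce the statement to the already-settled case $r = 0$ by means of Lemma \ref{lem:EvenJustShifted}, exactly as was done for the relation form of the result in Corollary \ref{cor:RelPosEven}. Writing $\overline{\varphi}(x) = x^r\varphi(x) = x^{2k}\varphi(x) = \sum_{i=0}^\infty \overline{\gamma}_i x^i$, Lemma \ref{lem:EvenJustShifted} gives that $\varphi(x) \in \mathcal{F}_r$ if and only if $\overline{\varphi}(x) \in \mathcal{F}_0$, the hypothesis $\gamma_i = 0$ for $i \leq -r$ in the case $r < 0$ being precisely what guarantees that $\overline{\varphi}$ is a genuine power series. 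The coefficients are related by the elementary shift $\overline{\gamma}_m = \gamma_{m-2k}$, and this relation carries the entire content of the argument.

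First I would record the $r = 0$ form of the statement, namely \cite[Theorem 5.1]{HHS} rewritten in the bracket notation of Equation \eqref{eq:EulerDefBracks}: a power series $\overline{\varphi}(x) = \sum \overline{\gamma}_i x^i$ lies in $\mathcal{F}_0$ if and only if $\overline{\gamma}_{2N+1} = \sum_{J=0}^N {N \brack J}\overline{\gamma}_{2J}$ for every $N \geq 0$. Substituting $\overline{\gamma}_m = \gamma_{m-2k}$ turns this into $\gamma_{2N+1-2k} = \sum_{J=0}^N {N \brack J}\gamma_{2J-2k}$, and the reindexing $n = N - k$, $i = J - k$ converts it verbatim into Equation \eqref{eq:OddFromEven-Even},
\[
    \gamma_{2n+1} = \sum_{i=-k}^n {n+k \brack i+k}\gamma_{2i},
\]
now indexed by all $n \geq -k$, since $N = n+k \geq 0$. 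Combined with Lemma \ref{lem:EvenJustShifted}, this shows that $\varphi(x) \in \mathcal{F}_r$ is equivalent to \eqref{eq:OddFromEven-Even} holding for every $n \geq -k$.

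The remaining point, and the one requiring care, is reconciling the range $n \geq -k$ produced by the shift with the range $n \geq \max\{0,-k\}$ asserted in the lemma; this index bookkeeping is where I expect the only real friction. When $r < 0$ (so $k < 0$) the two ranges coincide and nothing further is needed. When $r \geq 0$ (so $k \geq 0$) the shift yields the extra relations for $-k \leq n < 0$, and I would observe that each of these is the trivial identity $0 = 0$: by the convention $\gamma_i = 0$ for $i < 0$ the left-hand side $\gamma_{2n+1}$ vanishes, while on the right every index satisfies $2i \leq 2n < 0$, so each $\gamma_{2i}$ vanishes as well. Hence the relations for $-k \leq n < 0$ are vacuous, and ``for every $n \geq -k$'' is equivalent to ``for every $n \geq \max\{0,-k\}$'', completing the proof. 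The genuine subtlety is thus entirely this matching of ranges against the vanishing convention; the algebraic core is contained in the shift $\overline{\gamma}_m = \gamma_{m-2k}$ together with the cited $r = 0$ result.
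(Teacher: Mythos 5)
Your proof is correct and is essentially the paper's own argument: both reduce to the $r=0$ case of \cite[Theorem 5.1]{HHS} via the shift $\overline{\varphi}(x)=x^{2k}\varphi(x)$ of Lemma \ref{lem:EvenJustShifted} and the coefficient identity $\overline{\gamma}_m=\gamma_{m-2k}$. The only minor divergence is how the biconditional is closed: you verify directly that the shifted relations with $-k\le n<0$ are vacuous under the convention $\gamma_i=0$ for $i<0$ (and that the ranges coincide when $k<0$), whereas the paper instead appeals to the uniqueness statement of Corollary \ref{cor:OddFromEvBasic}; both steps are sound.
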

\begin{proof}
If $r = 0$, then by an application of Theorem \ref{thrm:Relations}, this result is precisely
\cite[Theorem 5.1]{HHS}. If $r\neq 0$, define
$\overline{\varphi}(x) = x^r\varphi(x) = \sum_{i=0}^\infty \overline{\gamma}_i x^i$.
By Lemma \ref{lem:EvenJustShifted}, if $\varphi(x)\in\mathcal{F}_r$, then
$\overline{\varphi}(x) \in \mathcal{F}_0$, and hence by the above argument for $r=0$,
we have that for each $n \geq 0$,
\[
    \overline{\gamma}_{2n+1}
    =   \sum\limits_{i=0}^n {n\brack i}\overline{\gamma}_{2i}.
\]
The result then follows from Corollary \ref{cor:OddFromEvBasic} and the fact that
$\overline{\gamma}_i = \gamma_{i-2k}$ for each $i \geq 0$.
\end{proof}

\begin{lemma}
\label{lem:OddFromEven:Odd}
Let $\varphi(x) = \sum_{i= 0}^\infty \gamma_i x^i \in \K[\![x]\!]$ and let $r=2k+1$
be an odd integer. If $r < 0$, assume that $\gamma_i = 0$ for each $i \leq -r$.
Then the $\gamma_i$ satisfy Equation \eqref{eq:OddFromEven-Odd} for
each $n \geq \max\{0,-k\}$ if and only if $\varphi(x) \in \mathcal{F}_r$.
\end{lemma}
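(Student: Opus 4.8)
The plan is to reduce Lemma~\ref{lem:OddFromEven:Odd} to the case $r=1$ by the same shifting technique used in the even case, and then to establish the base case $r=1$ directly using the integral representation from Lemma~\ref{lem:Relr1}. First I would handle the reduction. Set $\overline{\varphi}(x) = x^{r-1}\varphi(x) = \sum_{i=0}^\infty \overline{\gamma}_i x^i$, exactly as in Lemma~\ref{lem:OddJustShifted}; since $r=2k+1$ and we assume the first $1-r$ coefficients of $\varphi$ vanish when $r<0$, $\overline{\varphi}(x)$ is a genuine power series with $\overline{\gamma}_i = \gamma_{i-r+1} = \gamma_{i-2k}$. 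By Lemma~\ref{lem:OddJustShifted}, $\varphi(x)\in\mathcal{F}_r$ if and only if $\overline{\varphi}(x)\in\mathcal{F}_1$. Thus it suffices to prove the statement for $r=1$ (i.e. $k=0$), namely that $\overline{\varphi}(x)\in\mathcal{F}_1$ if and only if $\overline{\gamma}_{2n+1} = \sum_{i=0}^n {n\brace i}\overline{\gamma}_{2i}$ for each $n\geq 0$, and then translate back via the index shift $\overline{\gamma}_i = \gamma_{i-2k}$ together with Corollary~\ref{cor:OddFromEvBasic}, which guarantees uniqueness of the odd coefficients and so lets either implication follow from the other.

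The core work is therefore the $r=1$ base case. Here I would exploit the structure already exposed in Lemma~\ref{lem:Relr1}: membership $\overline{\varphi}(x)\in\mathcal{F}_1$ is equivalent to the existence of $\psi(y)\in\K[\![y]\!]$ with
\[
    \overline{\varphi}(x) = \frac{1}{x}\int_0^x \frac{1}{1-\xi}\,\psi\!\left(\frac{\xi^2}{1-\xi}\right) d\xi,
\]
which is the $k=0$ instance of Equation~\eqref{eq:RelPosOddChar}. The strategy is to extract the coefficients $\overline{\gamma}_j$ from this representation and show that the even coefficients $\overline{\gamma}_{2i}$ freely determine $\psi$, while the odd coefficients $\overline{\gamma}_{2n+1}$ are the prescribed linear combinations $\sum_i {n\brace i}\overline{\gamma}_{2i}$. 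The cleanest route is to use the differential relation $\frac{d}{dx}E_{n+1}(x) = (n+1)E_n(x)$ encoded in Equation~\eqref{eq:EulerEvenToOdd}, relating ${n\brace i}$ to the even Euler coefficients ${n+1\brack i+1}$, so that the odd case is obtained from the even case of \cite[Theorem~5.1]{HHS} by a single differentiation/integration. Concretely, writing $\chi(x) = (1-x)\frac{d}{dx}[x\overline{\varphi}(x)]$ as in Equation~\eqref{eq:DefChi}, the coefficients of $\chi$ are governed by the $r=0$ formula $\chi_{2n+1} = \sum_i {n\brack i}\chi_{2i}$, and unwinding the definition of $\chi$ in terms of $\overline{\varphi}$ converts these into the desired relations among the $\overline{\gamma}_j$ after applying Equation~\eqref{eq:EulerEvenToOdd}.

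The hard part will be the bookkeeping in this last translation: the operation $\overline{\varphi}\mapsto\chi$ mixes adjacent coefficients through the factor $(1-x)\frac{d}{dx}[x\,\cdot\,]$, so I will need to track precisely how the index $n$ and the Euler-coefficient brace ${n\brace i}$ transform under it, and verify that the boundary terms (the constant and linear pieces killed by the derivative, and the vanishing constant of integration) do not disturb the identity. I expect Equation~\eqref{eq:EulerEvenToOdd}, together with the recursion~\eqref{eq:EvenEulerRecursion}, to absorb most of the resulting arithmetic, reducing the claim to the already-established even case rather than requiring a fresh computation of Bernoulli-number sums. An alternative, more self-contained route would be to bypass $\chi$ entirely and prove the $r=1$ coefficient identity by induction on $n$ directly from Equation~\eqref{eq:Relr1}, using the explicit binomial weights there; but I anticipate the $\chi$-reduction to the $r=0$ result of \cite{HHS} to be shorter and more transparent, so I would present that as the main argument.
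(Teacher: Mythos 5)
Your reduction to the base case $r=1$ --- setting $\overline{\varphi}(x)=x^{r-1}\varphi(x)$, invoking Lemma \ref{lem:OddJustShifted}, shifting indices via $\overline{\gamma}_i=\gamma_{i-2k}$, and using Corollary \ref{cor:OddFromEvBasic} to get one implication of the biconditional from the other --- is exactly the paper's argument. The divergence, and the problem, is in the base case itself. The paper proves the direction ``brace relations $\Rightarrow\overline{\varphi}\in\mathcal{F}_1$'' by differentiating the previously established polynomial identity \cite[Equation (5.7)]{HHS} (restated as Equation \eqref{eq:EulerEq5.7}), applying Equation \eqref{eq:EulerEvenToOdd}, and then applying the umbral functional $\Gamma(x^i)=\gamma_i$; the assumed brace relations collapse the inner sums, leaving Equation \eqref{eq:RelPosOddKrewrite} with $k=0$, i.e.\ the $r=1$ case of Theorem \ref{thrm:Relations}. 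All the combinatorial content is preloaded into the already-proved identity \eqref{eq:EulerEq5.7}.

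Your route through $\chi(x)=(1-x)\frac{d}{dx}[x\overline{\varphi}(x)]$ and the $r=0$ bracket relations is not wrong in conception, but the step you defer as ``bookkeeping'' is the entire content, and Equations \eqref{eq:EulerEvenToOdd} and \eqref{eq:EvenEulerRecursion} will not absorb it: those are single-term rescalings, whereas the unwinding produces convolutions. Concretely, $\chi_j=(j+1)\overline{\gamma}_j-j\overline{\gamma}_{j-1}$, so the $r=0$ relation $\chi_{2n+1}=\sum_i{n\brack i}\chi_{2i}$ reads
\[
(2n+2)\overline{\gamma}_{2n+1}-(2n+1)\overline{\gamma}_{2n}
=\sum_{i=0}^{n}{n\brack i}\bigl((2i+1)\overline{\gamma}_{2i}-2i\,\overline{\gamma}_{2i-1}\bigr),
\]
which mixes odd and even coefficients on both sides. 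To extract $\overline{\gamma}_{2n+1}=\sum_j{n\brace j}\overline{\gamma}_{2j}$ you must induct on $n$, substitute the lower brace relations for the $\overline{\gamma}_{2i-1}$, and then verify the quadratic Euler-coefficient identity
\[
(2n+2){n\brace j}=(2j+1){n\brack j}+(2n+1)\delta_{j,n}
-\sum_{i=j+1}^{n}2i\,{n\brack i}{i-1\brace j},\qquad 0\le j\le n
\]
(one can check small cases, e.g.\ $n=2$, $j=0$ gives $3=3$). This identity is true, but it is a convolution identity of the same depth as Lemma \ref{lem:EvenFromOdd-SumForm} or the quadratic Bernoulli identities of Theorem \ref{thrm:Identities2}; nothing in the paper's toolkit hands it to you, and your proposal neither states nor proves it. The same obstruction reappears if you instead work from the partial-sum form $\overline{\gamma}_j=\frac{1}{j+1}\sum_{\ell\le j}\chi_\ell$, and your fallback (induction directly from Equation \eqref{eq:Relr1}) requires comparable effort. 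So as written the proposal has a genuine gap at the heart of the base case: either prove such an identity yourself (by generating functions or umbral calculus, at which point you have essentially re-derived the role of \eqref{eq:EulerEq5.7}), or follow the paper and let the differentiated identity \eqref{eq:EulerEq5.7} do the collapsing.
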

\begin{proof}
We recall \cite[Equation (5.7)]{HHS},
\begin{equation}
\label{eq:EulerEq5.7}
    \sum\limits_{\substack{i=0 \\ \mbox{\scriptsize $m+i$ even}}}^m (-1)^i {m \choose i}
        \sum\limits_j {\frac{m+i}{2}\brack j} x^{2j-1}
        +
        \sum\limits_{\substack{i=0 \\ \mbox{\scriptsize $m+i$ odd}}}^m (-1)^i {m \choose i} x^{m+i}
    = 0,
\end{equation}
which was derived using an application of \cite[Theorem 7.4]{GesselUmbral}.
For $r = 1$, we differentiate Equation \eqref{eq:EulerEq5.7} once and apply
Equation \eqref{eq:EulerEvenToOdd}, yielding
\begin{align*}
    \sum\limits_{\substack{i=0 \\ \mbox{\scriptsize $m+i$ even}}}^m (-1)^i {m \choose i}
        (m+i) \sum\limits_j {\frac{m+i}{2}-1\brace j-1} x^{2j-2}
        \\ \quad\quad +
        \sum\limits_{\substack{i=0 \\ \mbox{\scriptsize $m+i$ odd}}}^m (-1)^i {m \choose i}
       (m+i) x^{m+i-1}
    &= 0.
\end{align*}
We interpret $x$ as an umbral variable \cite{GesselUmbral} and define the
functional $\Gamma\co\K[x]\to\K$ by $\Gamma(x^i) = \gamma_{i}$. Applying $\Gamma$ to the last
equation results in
\begin{align*}
    \sum\limits_{\substack{i=0 \\ \mbox{\scriptsize $m+i$ even}}}^m (-1)^i {m \choose i}
        (m+i) \sum\limits_j {\frac{m+i}{2}-1\brace j-1} \gamma_{2j-2}
        \\ \quad\quad +
        \sum\limits_{\substack{i=0 \\ \mbox{\scriptsize $m+i$ odd}}}^m (-1)^i {m \choose i}
       (m+i) \gamma_{m+i-1}
    &= 0.
\end{align*}
Using the fact that the $\gamma_i$ satisfy Equation \eqref{eq:OddFromEven-Odd} for $k=0$,
we rewrite the sum over $j$ as
\[
    \sum\limits_j {\frac{m+i}{2}-1\brace j-1} \gamma_{2j-2}
    =
    \sum\limits_j {\frac{m+i}{2}-1\brace j} \gamma_{2j}
    =
    \gamma_{m+i-1},
\]
yielding
\[
    \sum\limits_{i=0}^m (-1)^i {m \choose i}
       (m+i) \gamma_{m+i-1} =
    \sum\limits_{i=0}^m (-1)^i {m \choose i}
        \frac{(m+i)!}{(m+i-1)!} \gamma_{m+i-1}
    = 0.
\]
This is Equation \eqref{eq:RelPosOddKrewrite} for $k=0$, which was demonstrated
in Lemma \ref{lem:RelPosOddCharacterize} to be equivalent to Equation
\eqref{eq:DegreePositiveOdd}, and hence by Theorem \ref{thrm:Relations}
and Corollary \ref{cor:OddFromEvBasic}, the result holds for $r = 1$.

Now assume $r \not= 1$.
By Lemma \ref{lem:OddJustShifted},
$\overline{\varphi}(x) = x^{r-1}\varphi(x)  = \sum_{i=0}^\infty \overline{\gamma}_i x^i$ is an element
of $\mathcal{F}_1$ if and only if $\varphi(x)\in\mathcal{F}_r$. By the above argument for
the case $r = 1$, we have that for each $n\ge 0$
\[
    \overline{\gamma}_{2n+1} = \sum\limits_{i=0}^{n} {n\brace i}\overline{\gamma}_{2i},
\]
which, applying $\overline{\gamma}_i = \gamma_{i-2k}$, is equivalent to
Equation \eqref{eq:OddFromEven-Odd} for the $\gamma_i$.
\end{proof}

We now consider the case of the first $1-r$ terms when $r < 0$. Similar to the proof
of Proposition \ref{prop:NegFirstReformulate}, we start with an auxiliary result.

\begin{lemma}
\label{lem:OddFromEven:FirstAux}
For $n\geq 0$, we have
\begin{equation}
\label{eq:NegFirstBinomIdentityAux}
    \sum\limits_{i=0}^{n} -{n \brace i}(-2)^{-2i} = (-2)^{-2n-1}
\end{equation}
\end{lemma}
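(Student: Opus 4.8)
The plan is to read the left-hand side as the evaluation of a known polynomial at a convenient point. By the defining identity \eqref{eq:EulerDefBracks}, the generating polynomial of the ${n\brace i}$ is $\sum_{i=0}^n {n\brace i} x^{2i} = x^{2n+1} - E_{2n+1}(x)$. Since $(-2)^{-2i} = 4^{-i} = (1/2)^{2i}$, evaluating at $x = 1/2$ gives
\[
    \sum_{i=0}^n -{n\brace i}(-2)^{-2i}
    = \sum_{i=0}^n -{n\brace i}\Big(\tfrac{1}{2}\Big)^{2i}
    = E_{2n+1}\big(\tfrac{1}{2}\big) - \Big(\tfrac{1}{2}\Big)^{2n+1}.
\]
It then remains to evaluate the two terms on the right, and the whole identity collapses to a statement about a single value of an Euler polynomial.

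The key input is that $E_{2n+1}(1/2) = 0$ for all $n \geq 0$. This follows directly from the generating function \eqref{eq:EulerGenerating}: substituting $x = 1/2$ yields
\[
    \frac{2e^{t/2}}{e^t+1} = \frac{2}{e^{t/2}+e^{-t/2}},
\]
which is an even function of $t$. Hence every odd-order Taylor coefficient vanishes, i.e.\ $E_{2n+1}(1/2) = 0$. (The same conclusion can be drawn from the reflection symmetry of the Euler polynomials evaluated at $x = 1/2$, but the parity of the generating function is the most economical route.)

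Combining these observations, and using the sign bookkeeping $(1/2)^{2n+1} = 2^{-2n-1}$ together with $(-2)^{-2n-1} = -2^{-2n-1}$, I obtain
\[
    \sum_{i=0}^n -{n\brace i}(-2)^{-2i} = 0 - 2^{-2n-1} = (-2)^{-2n-1},
\]
which is exactly Equation \eqref{eq:NegFirstBinomIdentityAux}. I expect no genuine obstacle here; the only points requiring care are the sign identity $(-2)^{-2n-1} = -2^{-2n-1}$ and the verification that the choice $x = 1/2$ simultaneously realizes each $(-2)^{-2i}$ as $x^{2i}$ while sending the leading term $x^{2n+1}$ to the desired target value.
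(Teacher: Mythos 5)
Your proof is correct, and it closes the argument with a different key input than the paper. The paper's proof never isolates the value $E_{2n+1}(1/2)$: it symmetrizes the left-hand side as $-\frac12\bigl[\sum_i {n\brace i}(1/2)^{2i} + \sum_i {n\brace i}(-1/2)^{2i}\bigr]$ (legitimate since the sum is even in the evaluation point), applies Equation \eqref{eq:EulerDefBracks} at both $x=1/2$ and $x=-1/2$ so that the odd leading terms $(\pm 1/2)^{2n+1}$ cancel, and then finishes with the complement identity \eqref{eq:EulerPolySum} in the form $E_{2n+1}(-\frac12+1)+E_{2n+1}(-\frac12)=2(-\frac12)^{2n+1}$ — using only tools already displayed in the paper. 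You instead evaluate \eqref{eq:EulerDefBracks} one-sidedly at $x=1/2$ and invoke the vanishing $E_{2n+1}(1/2)=0$, which you justify cleanly from the parity of $\frac{2e^{t/2}}{e^t+1}=\frac{2}{e^{t/2}+e^{-t/2}}$ as an even function of $t$; this is a standard fact, immediate from \eqref{eq:EulerGenerating}, though not stated in the paper. Your route is marginally more direct (no symmetrization, no cancellation step) and establishes a strictly stronger pointwise fact, which together with \eqref{eq:EulerPolySum} also recovers $E_{2n+1}(-1/2)=2(-1/2)^{2n+1}$, the quantity the paper's proof actually uses; the paper's version buys self-containedness by staying within its two displayed Euler-polynomial identities. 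Your sign bookkeeping, $(-2)^{-2i}=(1/2)^{2i}$ and $(-2)^{-2n-1}=-2^{-2n-1}$, is exactly right, so there is no gap.
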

\begin{proof}
Substituting $x= -1/2$ into Equations \eqref{eq:EulerPolySum} and \eqref{eq:EulerDefBracks} yields
\begin{align*}
    \sum\limits_{i=0}^{n} -{n \brace i}(-2)^{-2i}
    &= - \frac 12 \left[ \sum\limits_{i=0}^{n} {n \brace i} \left( \frac 12 \right)^{2i}
    + \sum\limits_{i=0}^{n} {n \brace i}\left(  -\frac 12 \right)^{2i} \right]
    \\&=
    - \frac 12 \left[ \left( \frac 12 \right)^{2n+1} - E_{2n+1}\left( \frac 12 \right)
    + \left( -\frac 12 \right)^{2n+1} - E_{2n+1}\left( -\frac 12 \right) \right]
    \\&=
    \frac 12 \left[ E_{2n+1}\left( -\frac 12 +1 \right)
    + E_{2n+1}\left( -\frac 12 \right) \right] = \left( -\frac 12 \right)^{2n+1}.
    \qedhere
\end{align*}
\end{proof}

With this, we are ready to prove the following.
\begin{lemma}
\label{lem:OddFromEven:First}
Assume $r < 0$ and let $f(x) = \sum_{i=0}^{-r}\gamma_i x^i$.
Then for $0\leq n \leq -k-1$, the $\gamma_i$ satisfy Equation \eqref{eq:OddFromEven-EvenFirst}
when $r=2k$ is even and Equation \eqref{eq:OddFromEven-OddFirst} when $r=2k+1$ is odd
if and only if $f(x)\in\mathcal{F}_r$.
\end{lemma}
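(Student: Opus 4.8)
The plan is to mirror the proof of Proposition \ref{prop:NegFirstReformulate}: reduce the biconditional to a computation on the explicit basis of $\mathcal{F}_r$ restricted to polynomials of degree at most $-r$ that was produced there, and then collapse that computation to the auxiliary identity of Lemma \ref{lem:OddFromEven:FirstAux}. Throughout, write $K=-k>0$. First I would set up a dimension count. By Corollary \ref{cor:OddFromEvBasic}, the relations \eqref{eq:OddFromEven-EvenFirst} (resp.\ \eqref{eq:OddFromEven-OddFirst}) express each odd coefficient $\gamma_{2n+1}$ with $0\le n\le -k-1$ as an explicit $\K$-linear combination of the even coefficients $\gamma_{2i}$; a short check shows these values of $n$ exhaust all odd indices of a polynomial of degree at most $-r$, so the relations cut out a subspace $W$ of $\{\deg\le -r\}$ of dimension $\lfloor -r/2\rfloor+1$, parametrized freely by the even coefficients. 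By Proposition \ref{prop:NegFirstReformulate}, the space $\mathcal{F}_r\cap\{\deg\le -r\}$ has the same dimension, with basis $g_s(x)=(x-2)^{-r-2s}x^{2s}$, $0\le s\le\lfloor -r/2\rfloor$, from \eqref{eq:NegFirstBinomBasis}. Hence it suffices to prove that each generator $g_s$ lies in $W$; equality of the two spaces, and therefore the lemma, then follows by comparing dimensions.

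Next I would verify $g_s\in W$. Substituting the coefficients \eqref{eq:NegFirstBinomGamma} of $g_s$ into the appropriate relation and performing the index shift $j=i-s$, $m=n-s$, $L=K-s$, the vanishing of the terms with $i<s$ together with a common factor $(-2)^{\pm 2s}$ reduces the claim for general $s$ to the case $s=0$, i.e.\ to an identity for $g_0(x)=(x-2)^{-r}$ with $K$ replaced by $L$. Concretely, after clearing the common power of $(-2)$, the even case becomes
\[
\binom{2L}{2m+1}(-2)^{-2m-1}+\sum_{j=0}^m {L-j\brack L-m}\binom{2L}{2j}(-2)^{-2j}=0,
\]
while the odd case is the same identity with $\binom{2L}{2m+1}$, $\binom{2L}{2j}$, ${L-j\brack L-m}$ replaced by $\binom{2L-1}{2m+1}$, $\binom{2L-1}{2j}$, ${L-j-1\brace L-m-1}$.

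The key step is to collapse each of these to Lemma \ref{lem:OddFromEven:FirstAux}. I would use Equation \eqref{eq:EulerBernoulli-Even} (resp.\ \eqref{eq:EulerBernoulli-Odd}) to rewrite the Euler coefficient as a Bernoulli factor $\frac{4^{m-j+1}-1}{m-j+1}B_{2(m-j+1)}$ times a single binomial, and then merge the two binomials by the trinomial revision identity $\binom{N}{a}\binom{N-a}{c}=\binom{N}{N-c}\binom{N-c}{a}$, which in both cases yields the common factor $\binom{2L}{2m+1}$ (resp.\ $\binom{2L-1}{2m+1}$) times $\binom{2m+1}{2j}$. The $L$-dependence then factors out and cancels, and by \eqref{eq:EulerBernoulli-Odd} the remaining coefficient $\frac{4^{m-j+1}-1}{m-j+1}B_{2(m-j+1)}\binom{2m+1}{2j}$ is exactly ${m\brace j}$, so both cases reduce to
\[
(-2)^{-2m-1}+\sum_{j=0}^m {m\brace j}(-2)^{-2j}=0,
\]
which is precisely Lemma \ref{lem:OddFromEven:FirstAux}.

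I expect the binomial bookkeeping of the last paragraph to be the main obstacle: one must track the factorials carefully enough to see both that the entire $L$-dependence cancels and that the \emph{even} and \emph{odd} cases land on the identical single-Euler-coefficient sum. The conceptual gain is that Lemma \ref{lem:OddFromEven:FirstAux} is stated for the braces ${m\brace j}$, yet through Equations \eqref{eq:EulerBernoulli-Even} and \eqref{eq:EulerBernoulli-Odd} it serves equally for the brackets ${n\brack i}$, so a single evaluation at $x=-1/2$ handles both parities. Once this collapse is established, the remaining ingredients—the dimension count and the reduction to $s=0$—are routine, and the lemma follows.
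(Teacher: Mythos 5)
Your proposal is correct and follows essentially the same route as the paper's proof: a dimension count reducing the biconditional to verifying Equations \eqref{eq:OddFromEven-EvenFirst} and \eqref{eq:OddFromEven-OddFirst} on the basis polynomials $g_s(x)=(x-2)^{-r-2s}x^{2s}$, followed by a computation that converts the Euler coefficients via Equations \eqref{eq:EulerBernoulli-Even} and \eqref{eq:EulerBernoulli-Odd} and collapses both parities to the single evaluation of Lemma \ref{lem:OddFromEven:FirstAux}. Your explicit reduction to $s=0$ via the shift $L=K-s$ and the use of trinomial revision are only organizational variants of the paper's inline factorial manipulation (and note the basis description of $\mathcal{F}_r\cap\{\deg\le -r\}$ comes from Theorem \ref{thrm:LaurentPoly} and Equation \eqref{eq:membF_r} rather than Proposition \ref{prop:NegFirstReformulate} itself, a harmless citation slip).
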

\begin{proof}
Using the description of $\mathcal{F}_r$ given by Theorems \ref{thrm:Deg0HHS} and \ref{thrm:LaurentPoly}
and the generator given in Equation \eqref{eq:xox-2sqr}, $f(x)\in\mathcal{F}_r$ if and only if we can
express
\begin{equation}
\label{eq:NegFirstReformulate2}
    f(x)    =   (x - 2)^{-r} \sum\limits_{i=0}^{\lfloor -r/2 \rfloor} \delta_i
                                \left(\frac{x}{x-2}\right)^{2i}
            =   x^{-r} \sum\limits_{i=0}^{\lfloor -r/2 \rfloor} \delta_i
                                \left(\frac{x-2}{x}\right)^{-r-2i}.
\end{equation}
We will follow the proof of Proposition \ref{prop:NegFirstReformulate} and consider two cases.

If $r=2k$ is even, it is easy to check that for each arbitrary choice of values $\gamma_{2i}$ $(0\le i \le -k)$, there is a unique
$f(x) = \sum_{i=0}^{-r}\gamma_i x^i$ satisfying Equation \eqref{eq:OddFromEven-EvenFirst} for $0\leq n \leq -k-1$.
In particular, Equation \eqref{eq:OddFromEven-EvenFirst} describes a vector space $V$ of polynomials with $\dim V = -k+1$.
Note also that Equation \eqref{eq:NegFirstReformulate2} describes a vector space $V'$ of polynomials with $\dim V' = -k+1$
and basis
\[
    g_s(x) = (x-2)^{-r-2s}x^{2s}, \quad 0 \le s \le -k.
\]
Hence, for $V=V'$, it is sufficient to show that each of these polynomials satisfies Equation \eqref{eq:OddFromEven-EvenFirst}
for $0\leq n \leq -k-1$.

For $n<s$, Equation \eqref{eq:OddFromEven-EvenFirst} is trivially satisfied as all entries $\gamma_i =0$,
cf. Equation~\eqref{eq:NegFirstBinomGamma}. For $s \leq n \leq -k-1$, using
Equations \eqref{eq:NegFirstBinomGamma}, \eqref{eq:EulerBernoulli-Even}, \eqref{eq:EulerBernoulli-Odd} and \eqref{eq:NegFirstBinomIdentityAux},
and recalling $r=2k$, the right side of Equation \eqref{eq:OddFromEven-EvenFirst} can be written
\begin{align*}
    & \sum\limits_{i=0}^n -{-k-i\brack -k-n}\gamma_{2i} =
    \sum\limits_{i=0}^n -\frac{4^{n-i+1}-1}{n-i+1}B_{2(n-i+1)}{-2k-2i \choose -2k-2n-1}\gamma_{2i}
    \\&=
    \sum\limits_{i=s}^n -\frac{4^{n-i+1}-1}{n-i+1}B_{2(n-i+1)}{-2k-2i \choose -2k-2n-1}{-2k-2s \choose 2i-2s} (-2)^{-2k-2i}
    \\&=
    \sum\limits_{i=s}^n -\frac{4^{n-i+1}-1}{n-i+1}B_{2(n-i+1)}
    \frac{(-2k-2i)!(-2k-2s)! (-2)^{-2k-2i}}{(-2k-2n-1)!(2n-2i+1)!(2i-2s)!(-2k-2i)!}
    \\&=
    \frac{(-2k-2s)! (-2)^{-2k-2s}}{(2n-2s+1)!(-2k-2n-1)!} \sum\limits_{i=s}^n -\frac{4^{n-i+1}-1}{n-i+1}B_{2(n-i+1)}
    \frac{(2n-2s+1)! (-2)^{2s-2i}}{(2i-2s)!(2n-2i+1)!}
    \\&=
    {-2k-2s \choose 2n-2s+1} (-2)^{-2k-2s} \sum\limits_{i=s}^n -\frac{4^{n-i+1}-1}{n-i+1} B_{2(n-i+1)} {2n-2s+1 \choose 2i-2s}
    (-2)^{2s-2i}
    \\&=
    {-2k-2s \choose 2n-2s+1} (-2)^{-2k-2s} \sum\limits_{i=s}^n -{n-s \brace i-s}(-2)^{2s-2i}
    \\&=
    {-2k-2s \choose 2n-2s+1} (-2)^{-2k-2s} \sum\limits_{i=0}^{n-s} -{n-s \brace i}(-2)^{-2i}
    \\&=
    {-2k-2s \choose 2n-2s+1} (-2)^{-2k-2s} (-2)^{-2(n-s)-1} = {-2k-2s \choose 2n-2s+1} (-2)^{-2k-2n-1}
    \\&=\gamma_{2n+1}.
\end{align*}

If $r=2k+1$ is odd, we have $\dim V = \dim V'= -k$ and need to check that
\[
    g_s(x) = (x-2)^{-r-2s}x^{2s}, \quad 0 \le s \le -k-1
\]
satisfies Equation \eqref{eq:OddFromEven-OddFirst} for $0\leq n \leq -k-1$.

For $s \leq n \leq -k-1$, using
Equations \eqref{eq:NegFirstBinomGamma}, \eqref{eq:EulerBernoulli-Odd} and \eqref{eq:NegFirstBinomIdentityAux},
and recalling $r=2k+1$, the right side of Equation \eqref{eq:OddFromEven-OddFirst} can be written
\begin{align*}
    & \sum\limits_{i=0}^n -{-k-i-1\brace -k-n-1}\gamma_{2i} =
    \sum\limits_{i=0}^n -\frac{4^{n-i+1}-1}{n-i+1}B_{2(n-i+1)}{-2k-2i-1 \choose -2k-2n-2}\gamma_{2i}
    \\&=
    \sum\limits_{i=s}^n -\frac{4^{n-i+1}-1}{n-i+1}B_{2(n-i+1)}{-2k-2i-1 \choose -2k-2n-2}{-2k-2s-1 \choose 2i-2s} (-2)^{-2k-2i-1}
    \\&=
    {-2k-2s-1 \choose 2n-2s+1} (-2)^{-2k-2s-1} \sum\limits_{i=s}^n -\frac{4^{n-i+1}-1}{n-i+1} B_{2(n-i+1)} {2n-2s+1 \choose 2i-2s}
    (-2)^{2s-2i}
    \\&=
    {-2k-2s-1 \choose 2n-2s+1} (-2)^{-2k-2s-1} (-2)^{-2(n-s)-1} = {-2k-2s-1 \choose 2n-2s+1} (-2)^{-2k-2n-2}
    \\&=\gamma_{2n+1}.
    \qedhere
\end{align*}
\end{proof}

Theorem \ref{thrm:OddFromEven} now follows from Lemmas \ref{lem:OddFromEven:Even},
\ref{lem:OddFromEven:Odd}, and \ref{lem:OddFromEven:First}.

% xxxxxxxxxxxxxxxxxxxxxxxxxxxxxxxxxxxxxxxxxxxxxxxxxxxxxxxxxxxxxxxxxxxxxxxxx

\subsection{Even coefficients from odd: Bernoulli numbers}
\label{subsec:EvenFromOdd}

In this subsection, we give another characterization of elements of $\mathcal{F}_r$
by exploring whether the even degree coefficients are determined by the odd. The
results in this direction are not as straightforward as those of Theorem \ref{thrm:OddFromEven};
there are for some values of $r$ an additional constraint and an unconstrained
coefficient. However, we again find an appealing characterization, this time most
easily expressed in terms of the Bernoulli numbers. Specifically, we have the following.

\begin{theorem}
\label{thrm:EvenFromOdd}
Let $\varphi(x) = \sum_{i= 0}^\infty \gamma_i x^i$ and let $r\in\Z$.
As above, we stipulate that $\gamma_i = 0$ for $i < 0$. Then
$\varphi(x) \in\mathcal{F}_r$ if and only if the following conditions
are fulfilled.

If $r = 2k$ is even, then for each $n \geq \max\{0,1-k\}$
\begin{equation}
\label{eq:EvenFromOdd-Main}
    \gamma_{2n} = \frac{2}{2n+r}\sum\limits_{i=-k}^n {2n+r \choose 2i+r} B_{2n-2i} \gamma_{2i+1},
\end{equation}
and, if $r \leq 0$, then $\gamma_{-r}$ is unconstrained, $\gamma_{1-r} = 0$, and (when $r < 0$)
\begin{equation}
\label{eq:EvenFromOdd-First}
    \gamma_{2n} = \sum\limits_{i=0}^n \frac{2}{2i+r} {-2i-r \choose -2n-r} B_{2n-2i} \gamma_{2i+1}
\end{equation}
holds for $0 \leq n \leq -k-1$. This completely determines the $\gamma_{2i}$ in terms of the $\gamma_{2i+1}$.

If $r = 2k+1$ is odd, then Equation \eqref{eq:EvenFromOdd-Main} holds for each $n \geq \max\{0,-k\}$,
and, if $r < 0$, then Equation \eqref{eq:EvenFromOdd-First} holds for $0 \leq n \leq -k-1$.
This completely determines the $\gamma_{2i}$ in terms of the $\gamma_{2i+1}$.
\end{theorem}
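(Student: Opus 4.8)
The plan is to treat Theorem~\ref{thrm:EvenFromOdd} as the formal inverse of Theorem~\ref{thrm:OddFromEven}. By Corollary~\ref{cor:OddFromEvBasic}, for each fixed $r$ an element of $\mathcal{F}_r$ is freely determined by its even coefficients $\{\gamma_{2i}\}$, with the $\gamma_{2i+1}$ then forced; consequently either direction of the biconditional follows from the other, and it suffices to show that, \emph{given} a power series $\varphi(x)=\sum_i\gamma_i x^i$, the even-from-odd relations of the statement hold if and only if the odd-from-even relations of Theorem~\ref{thrm:OddFromEven} hold. Since Theorem~\ref{thrm:OddFromEven} already identifies the latter with membership in $\mathcal{F}_r$, the heart of the matter is a matrix-inversion statement: the coefficient array $\bigl(\tfrac{2}{2n+r}\binom{2n+r}{2i+r}B_{2n-2i}\bigr)$ appearing in Equation~\eqref{eq:EvenFromOdd-Main} is the two-sided inverse of the Euler-bracket array $\bigl({n+k\brack i+k}\bigr)$, respectively the Euler-brace array $\bigl({n+k\brace i+k}\bigr)$, of Theorem~\ref{thrm:OddFromEven}.

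As in Section~\ref{sec:ProofRelations} and the proof of Theorem~\ref{thrm:OddFromEven}, I would organize the argument by the parity and sign of $r$, first assuming in the case $r<0$ that $\gamma_i=0$ for $i\le -r$ so that the shift lemmas apply, and treating the remaining low-degree coefficients afterward. For the \emph{main} relations, the even shift $\overline{\varphi}(x)=x^r\varphi(x)$ of Lemma~\ref{lem:EvenJustShifted} reduces the even case to $r=0$, and the odd shift $\overline{\varphi}(x)=x^{r-1}\varphi(x)$ of Lemma~\ref{lem:OddJustShifted} reduces the odd case to $r=1$; in each case the reindexing $\overline{\gamma}_i=\gamma_{i-2k}$ converts Equation~\eqref{eq:EvenFromOdd-Main} into its base-case form. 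The base case $r=0$ is exactly the orthogonality supplied by Lemma~\ref{lem:EvenFromOdd-SumForm}: substituting $\gamma_{2i+1}=\sum_j{i\brack j}\gamma_{2j}$ into $\gamma_{2n}=\frac{1}{n}\sum_i\binom{2n}{2i}B_{2n-2i}\gamma_{2i+1}$ and applying Lemma~\ref{lem:EvenFromOdd-SumForm} collapses the double sum to $\gamma_{2n}$. For the base case $r=1$, the relation ${i\brace j}=\frac{2j+1}{2i+2}{i+1\brack j+1}$ of Equation~\eqref{eq:EulerEvenToOdd}, together with the elementary identity $\binom{2N-1}{2i-1}\frac{1}{2i}=\frac{1}{2N}\binom{2N}{2i}$, lets me deduce the required brace-orthogonality directly from Lemma~\ref{lem:EvenFromOdd-SumForm} after the index shift $N=n+1$, so that no genuinely new summation identity is needed.

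It remains to treat, for $r<0$, the finitely many low-degree coefficients and the boundary conditions on $\gamma_{-r}$ and $\gamma_{1-r}$. Here I would mirror the proof of Lemma~\ref{lem:OddFromEven:First}: using Theorems~\ref{thrm:Deg0HHS} and~\ref{thrm:LaurentPoly} together with the generator $\lambda_2(x)=(x/(x-2))^2$ of Equation~\eqref{eq:xox-2sqr}, the polynomial part $f(x)=\sum_{i=0}^{-r}\gamma_i x^i$ lies in $\mathcal{F}_r$ if and only if it is a $\K$-combination of the basis polynomials $g_s(x)=(x-2)^{-r-2s}x^{2s}$, and it suffices to verify Equation~\eqref{eq:EvenFromOdd-First} on each $g_s$. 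Substituting the explicit coefficients from Equation~\eqref{eq:NegFirstBinomGamma} and the Bernoulli expressions of Equations~\eqref{eq:EulerBernoulli-Even} and~\eqref{eq:EulerBernoulli-Odd} reduces the claim to the finite binomial--Bernoulli sum already isolated in Lemma~\ref{lem:OddFromEven:FirstAux}. The asymmetry between the parities then emerges cleanly from the index ranges: when $r$ is even the gap index $2n=-r$ is itself even, so $\gamma_{-r}$ lies in neither the main range $2n\ge 2-r$ nor the first range $2n\le -r-2$ and is unconstrained, while the \emph{odd} index $1-r$ is an input that must vanish—the shifted avatar of the identity $\gamma_1=0$ for elements of $\mathcal{F}_0$, itself reflecting that $x^2/(1-x)$ has no linear term; when $r$ is odd the roles swap ($-r$ odd, $1-r$ even), so no slack coefficient and no extra vanishing appear. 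Reassembling the main and first parts through the decomposition $\varphi=\varphi_1+\varphi_2$ of Section~\ref{sec:ProofRelations} completes the proof.

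The main obstacle I anticipate is the bookkeeping at the boundary for $r<0$, rather than the inversion identities themselves. One must verify that $2n=-r$ falls precisely in the gap between the two relation ranges, track the asymmetry described above, and reconcile the shift lemmas' hypothesis $\gamma_i=0$ for $i\le -r$ with the assertion that $\gamma_{-r}$ is unconstrained (the resolution being that $\gamma_{-r}$ belongs to the polynomial part $\varphi_1$ handled separately from the shifted tail $\varphi_2$). Once the normalization $\tfrac{2}{2n+r}$ is pinned down, the core orthogonality follows uniformly from Lemma~\ref{lem:EvenFromOdd-SumForm} and Equation~\eqref{eq:EulerEvenToOdd}.
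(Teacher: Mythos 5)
Your treatment of the main relations \eqref{eq:EvenFromOdd-Main} follows the paper's proof quite closely: the paper likewise proves only one implication (every element of $\mathcal{F}_r$ satisfies the relations, by substituting Theorem \ref{thrm:OddFromEven} into the right-hand side and collapsing via Lemma \ref{lem:EvenFromOdd-SumForm}) and obtains the converse from a uniqueness count, then reduces general even $r$ to $r=0$ and general odd $r$ to $r=1$ via Lemmas \ref{lem:EvenJustShifted} and \ref{lem:OddJustShifted}. Two small remarks there. First, the uniqueness you need is the \emph{reverse} of Corollary \ref{cor:OddFromEvBasic}: that the odd coefficients (with $\gamma_{1-r}=0$ and $\gamma_{-r}$ free when $r=2k\le 0$) determine the element of $\mathcal{F}_r$; the paper establishes this by a separate induction ``similar to'' the corollary, and your plan should do the same rather than cite the corollary itself. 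Second, your $r=1$ base case is a genuine (mild) streamlining: the paper must pull out the $j=0$ term, show its coefficient vanishes by Lemma \ref{lem:EvenFromOdd-SumForm}, and then apply the recursion \eqref{eq:EvenEulerRecursion}, whereas your identity ${2n+1\choose 2i+1}\frac{1}{2i+2}=\frac{1}{2n+2}{2n+2\choose 2i+2}$ combined with \eqref{eq:EulerEvenToOdd} handles all $j$ uniformly from the same lemma. That part of your proposal is sound.

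The genuine gap is in the low-degree part for $r<0$. Your plan to verify \eqref{eq:EvenFromOdd-First} directly on the basis $g_s(x)=(x-2)^{-r-2s}x^{2s}$ is viable---and in fact diverges from the paper, which instead substitutes \eqref{eq:OddFromEven-EvenFirst}/\eqref{eq:OddFromEven-OddFirst} into \eqref{eq:EvenFromOdd-First} and kills the resulting \emph{quadratic} Bernoulli sum with Moll's identity \eqref{eq:BernMollIdentity}---but your claimed reduction to Lemma \ref{lem:OddFromEven:FirstAux} fails. That lemma is the Euler-polynomial evaluation $\sum_{i=0}^{n}-{n\brace i}(-2)^{-2i}=(-2)^{-2n-1}$, an identity for the weighted combinations $\frac{4^m-1}{m}B_{2m}$; it is precisely what the bracket/brace coefficients of \eqref{eq:OddFromEven-EvenFirst} and \eqref{eq:OddFromEven-OddFirst} produce in the proof of Lemma \ref{lem:OddFromEven:First}. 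The coefficients of \eqref{eq:EvenFromOdd-First}, by contrast, carry \emph{plain} Bernoulli numbers, and no braces or brackets ever appear in the direct verification, so Equations \eqref{eq:EulerBernoulli-Even} and \eqref{eq:EulerBernoulli-Odd} have no role either. Carrying out your substitution of \eqref{eq:NegFirstBinomGamma} into \eqref{eq:EvenFromOdd-First}, the factorials telescope, the dependence on $s$ and on the parity of $r$ drops out after the shift $i\mapsto i-s$, and what remains to be proved is
\begin{equation*}
    \sum\limits_{i=0}^{N} {2N+1 \choose 2i+1} 4^{N-i} B_{2N-2i} \;=\; 2N+1,
    \qquad N = n-s,
\end{equation*}
a linear Bernoulli identity (equivalent to $B_m(1/2)=(2^{1-m}-1)B_m$ together with the vanishing of the odd Bernoulli numbers) which appears nowhere in the paper and is not Lemma \ref{lem:OddFromEven:FirstAux}. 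The identity is true and elementary---so your route can be completed, and once done it is arguably more elementary than the paper's appeal to the quadratic identity \eqref{eq:BernMollIdentity}---but as written your proof has a hole: the lemma you cite cannot supply the needed orthogonality, and you must state and prove the displayed identity to close the argument.
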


\begin{proof}
We first observe that, by an induction argument similar to that described in Corollary
\ref{cor:OddFromEvBasic}, an arbitrary choice of $\{\gamma_{2i+1}\}_{i=0}^\infty\subset\K$
uniquely determines an element of $\mathcal{F}_r$ except when $r = 2k \leq 0$, in which case
any choice of $\{\gamma_{2i+1}\}_{i=0}^\infty\subset\K$ such that $\gamma_{1-r}=0$ along with
an arbitrary choice of $\gamma_{-r}$ uniquely determines an element of $\mathcal{F}_r$.
Hence, it is sufficient to show that the coefficients of any element of $\mathcal{F}_r$
satisfy Equations \eqref{eq:EvenFromOdd-Main} and \eqref{eq:EvenFromOdd-First} as described
in the statement of the theorem.

So assume $\varphi(x)=\sum_{i= 0}^\infty \gamma_i x^i\in \mathcal{F}_r$
for some $r$. The idea of the proof in each case is to use the characterization given in
Theorem \ref{thrm:OddFromEven} to rewrite the right side of Equation \eqref{eq:EvenFromOdd-Main}
or \eqref{eq:EvenFromOdd-First} in a form that can be simplified using
Lemma \ref{lem:EvenFromOdd-SumForm}.

Assume $r = 0$ and then, using $\gamma_1 = 0$ and Equation \eqref{eq:OddFromEven-Even},
the right side of Equation \eqref{eq:EvenFromOdd-Main} with $n \geq 1$ becomes
\begin{align*}
    \frac{1}{n}\sum\limits_{i=1}^n {2n \choose 2i} B_{2n-2i} \gamma_{2i+1}
    &=
    \frac{1}{n}\sum\limits_{i=1}^n {2n \choose 2i} B_{2n-2i}
        \sum\limits_{j=0}^{i} {i\brack j}\gamma_{2j}
    \\&=
    \frac{1}{n} \sum\limits_{j=0}^{n} \gamma_{2j}
        \sum\limits_{i=j}^n {2n \choose 2i} B_{2n-2i} {i\brack j}.
\end{align*}
Applying Lemma \ref{lem:EvenFromOdd-SumForm} to the sum over $i$,
this expression is equal to $\gamma_{2n}$.

If $r = 2k > 0$, then
$\overline{\varphi}(x) = x^r\varphi(x) = \sum_{i=0}^\infty \overline{\gamma}_i x^i\in\mathcal{F}_0$
by Lemma \ref{lem:EvenJustShifted}. Hence by the above argument, the $\overline{\gamma}_i$ satisfy
Equation \eqref{eq:EvenFromOdd-Main} for $\mathcal{F}_0$
and any $n \geq 1$, and hence using $\overline{\gamma}_i = \gamma_{i-2k}$, the $\gamma_i$
satisfy Equation \eqref{eq:EvenFromOdd-Main} for $\mathcal{F}_r$ and any $n \geq 0$.
If $r = 2k < 0$, then we apply the same argument assuming $\gamma_0=\gamma_1=\ldots=\gamma_{-r}=0$
to see that Equation \eqref{eq:EvenFromOdd-Main} for $\mathcal{F}_r$ is satisfied for any
$n \geq 1-k$.

Now assume $r = 1$. When $n = 0$, Equation \eqref{eq:EvenFromOdd-Main} simply states
$\gamma_0 = 2\gamma_1$, which, as ${0\brace 0} = 1/2$, is equivalent to the same case of
Equation \eqref{eq:OddFromEven-Odd}. So assume $n\geq 1$, and then we use Equation
\eqref{eq:OddFromEven-Odd} to express the right side of Equation \eqref{eq:EvenFromOdd-Main} as
\[
    \frac{2}{2n+1}\sum\limits_{i=0}^n {2n+1 \choose 2i+1} B_{2n-2i} \gamma_{2i+1}
    =
    \frac{2}{2n+1}\sum\limits_{i=0}^n {2n+1 \choose 2i+1} B_{2n-2i}
        \sum\limits_{j=0}^i {i\brace j}\gamma_{2j}.
\]
Applying Equation \eqref{eq:EulerEvenToOdd} to rewrite ${i\brace j}$ and exchanging the
sums yields
\begin{equation}
\label{eq:EvenToOddStepR1}
    \frac{2}{2n+1}\sum\limits_{j=0}^n \gamma_{2j}
        \sum\limits_{i=j}^n {2n+1 \choose 2i+1} B_{2n-2i}
        \frac{2j+1}{2i+2} {i+1\brack j+1}.
\end{equation}
The coefficient of $\gamma_0$, corresponding to $j=0$, can be rewritten as
\[
    \frac{2(2n)!}{(2n+2)!} \sum\limits_{i=0}^n {2(n+1)\choose 2(i+1)} B_{2n-2i}
        {i+1\brack 1},
\]
and this sum vanishes by Lemma \ref{lem:EvenFromOdd-SumForm}. Using this fact and Equation
\eqref{eq:EvenEulerRecursion}, we rewrite \eqref{eq:EvenToOddStepR1} as
\[
    2\sum\limits_{j=1}^n \gamma_{2j}
        \sum\limits_{i=j}^n \frac{{2i+2\choose 2}(2n)!(2j+1)}{{2j+1\choose 2}(2i+2)!(2n-2i)!} B_{2n-2i}
        {i\brack j}
    =
    \sum\limits_{j=1}^n \frac{\gamma_{2j}}{j}
        \sum\limits_{i=j}^n {2n\choose 2i } B_{2n-2i}
        {i\brack j}.
\]
Again by Lemma \ref{lem:EvenFromOdd-SumForm}, this is equal to $\gamma_{2n}$.

If $r = 2k+1 > 1$, then
$\overline{\varphi}(x) = x^{r-1}\varphi(x) = \sum_{i=0}^\infty \overline{\gamma}_i x^i\in\mathcal{F}_1$
by Lemma \ref{lem:OddJustShifted}. Hence by the above argument, the $\overline{\gamma}_i$ satisfy
Equation \eqref{eq:EvenFromOdd-Main} for $\mathcal{F}_1$
and any $n \geq 0$, and hence using $\overline{\gamma}_i = \gamma_{i-2k}$, the $\gamma_i$
satisfy Equation \eqref{eq:EvenFromOdd-Main} for $\mathcal{F}_r$ and any $n \geq 0$.
If $r = 2k+1 < 0$, then we apply the same argument assuming $\gamma_0=\gamma_1=\ldots=\gamma_{-r}=0$
to see that Equation \eqref{eq:EvenFromOdd-Main} for $\mathcal{F}_r$ is satisfied for any
$n \geq -k$.

With this, we have that every element of $\mathcal{F}_r$ satisfies
Equation \eqref{eq:EvenFromOdd-Main} for the appropriate values of $n$.

Now, we turn to Equation \eqref{eq:EvenFromOdd-First}. First assume $r = 2k < 0$ is even.
For $0 \leq n \leq -k-1$, using Equation \eqref{eq:OddFromEven-EvenFirst}, the right side
of Equation \eqref{eq:EvenFromOdd-First} can be written
\[
    \sum\limits_{i=0}^n \frac{1}{i+k} {-2i-2k \choose -2n-2k} B_{2n-2i}
        \sum\limits_{j=0}^i -{-k-j\brack -k-i}\gamma_{2j}.
\]
Exchanging the sums and now using Equation \eqref{eq:EulerBernoulli-Even} to rewrite
${-k-j\brack -k-i}$ yields
\[
    - \sum\limits_{j=0}^n \gamma_{2j}
        \sum\limits_{i=j}^n \frac{1}{i+k} {-2i-2k \choose -2n-2k}
        {-2k-2j\choose -2k-2i-1}
        \frac{4^{i-j+1}-1}{i-j+1} B_{2(n-i)} B_{2(i-j+1)}.
\]
Expanding the binomial coefficients, simplifying, setting $\ell=n-j+1$ and
$s=i-j+1$, and recalling $r = 2k$, this expression is rewritten as
\begin{equation}
\label{eq:EvenToOddStepFirst}
    \sum\limits_{j=0}^n \frac{4(-r-2j)! \gamma_{2j}}{(-r-2n)!(2n-2j+2)!}
        \sum\limits_{s=1}^\ell {2\ell \choose 2s} (2^{2s}-1)
        B_{2(\ell-s)} B_{2s}.
\end{equation}
By Equation \eqref{eq:BernMollIdentity}, the sum over $s$ vanishes whenever
$\ell \geq 2$, implying that the only nonzero term occurs
when $\ell = 1$, i.e. $j = n$. Then as $B_0 = 1$ and $B_2 = 1/6$, this expression then simplifies
to $\gamma_{2n}$, completing the proof of Equation \eqref{eq:EvenFromOdd-First} when
$r$ is even.

Finally, suppose $r = 2k+1 < 0$ is odd. For $0 \leq n \leq -k-1$,
the right side of Equation \eqref{eq:EvenFromOdd-First} can be expressed using
Equation \eqref{eq:OddFromEven-OddFirst} as
\[
    \sum\limits_{i=0}^n \frac{2}{2i+2k+1} {-2i-2k-1 \choose -2n-2k-1} B_{2n-2i}
        \sum\limits_{j=0}^i -{-k-j-1\brace -k-i-1}\gamma_{2j}.
\]
Following the same steps as in the case of even $r$ except using
Equation \eqref{eq:EulerBernoulli-Odd} to rewrite ${-k-j-1\brace -k-i-1}$, we again
obtain Equation \eqref{eq:EvenToOddStepFirst}, and hence that this expression is
equal to $\gamma_{2n}$.
\end{proof}

% xxxxxxxxxxxxxxxxxxxxxxxxxxxxxxxxxxxxxxxxxxxxxxxxxxxxxxxxxxxxxxxxxxxxxxxxx
% xxxxxxxxxxxxxxxxxxxxxxxxxxxxxxxxxxxxxxxxxxxxxxxxxxxxxxxxxxxxxxxxxxxxxxxxx
% xxxxxxxxxxxxxxxxxxxxxxxxxxxxxxxxxxxxxxxxxxxxxxxxxxxxxxxxxxxxxxxxxxxxxxxxx
% xxxxxxxxxxxxxxxxxxxxxxxxxxxxxxxxxxxxxxxxxxxxxxxxxxxxxxxxxxxxxxxxxxxxxxxxx

\section{Identities for the Bernoulli numbers}
\label{sec:Identities}

In this section, we use the results of Section \ref{sec:EulerPoly} to derive two families
of combinatorial identities. This is achieved by using the $\Z$-graded algebra structure
of $\mathcal{F}$ (cf. Proposition \ref{prop:GradedAlgebra})
combined first with the formulation of Theorem \ref{thrm:OddFromEven}
given in Equation \eqref{eq:thrmOddFromEvenUnified}, then with Theorem \ref{thrm:EvenFromOdd}.

In Subsection \ref{subsec:Identities1}, we derive a family of cubic relations
for the coefficients ${n \choose i}_r$ defined in Equation \eqref{simplified cases},
see Theorem \ref{thrm:Identities1} below. In Subsection \ref{subsec:Identities2},
we derive a family of binomial relations for the Bernoulli numbers, see
Theorem \ref{thrm:Identities2}.

% xxxxxxxxxxxxxxxxxxxxxxxxxxxxxxxxxxxxxxxxxxxxxxxxxxxxxxxxxxxxxxxxxxxxxxxxx

\subsection{Identities derived from Theorem \ref{thrm:OddFromEven}}
\label{subsec:Identities1}

Let $\varphi(x):=\sum_{i=0}^\infty\varphi_i\:x^i\in \mathcal F_r$ and $\psi(x):=\sum_{j=0}^\infty\psi_j\:x^j\in \mathcal F_s$.
By Theorem \ref{thrm:OddFromEven}, we have that for each $n\ge 0$,
\[
    \varphi_{2n+1}=\sum_{i=0}^n {n\choose i}_r\varphi_{2i},
    \quad\quad
    \psi_{2n+1}=\sum_{i=0}^n {n\choose i}_s\psi_{2i}.
\]
The product $(\varphi\cdot \psi)(x):=\sum_{i=0}^\infty\vartheta_i\:x^i$ is in $\mathcal F_{r+s}$ by Proposition \ref{prop:GradedAlgebra}.
On the one hand, we have by the Cauchy product formula that
\begin{align*}
    \vartheta_{2n+1}
        &=  \sum_{\ell =0}^n(\varphi_{2\ell+1}\:\psi_{2(n-\ell)}+\varphi_{2(n-\ell)}\:\psi_{2\ell+1})
        \\
        &=  \sum_{\ell=0}^n\sum_{k=0}^\ell \left({\ell \choose k}_r \varphi_{2k}\:\psi_{2(n-\ell)}
                + {\ell \choose k}_s \varphi_{2(n-\ell)}\:\psi_{2k}\right)
        \\
        &=  \sum_{\substack\alpha,\beta\ge 0}\left({n -\beta \choose \alpha}_r
                +{n-\alpha\choose \beta}_s\right)\varphi_{2\alpha}\:\psi_{2\beta}.
\end{align*}
On the other hand, $(\varphi\cdot \psi)(x)\in\mathcal F_{r+s}$ and
Equation \eqref{eq:thrmOddFromEvenUnified} imply
\begin{align*}
    & \vartheta_{2n+1}
        =  \sum_{k=0}^n{n\choose k}_{r+s}\vartheta_{2k}=\sum_{k=0}^n\sum_{\ell= 0}^k {n\choose k}_{r+s}
                \left(\varphi_{2\ell}\:\psi_{2(k-\ell)}+\varphi_{2\ell+1}\:\psi_{2(k-\ell)-1}\right)
        \\
        & =  \sum_{\substack{\alpha,\beta\ge 0}}{n\choose \alpha+\beta}_{r+s}
                \varphi_{2\alpha}\:\psi_{2\beta}+\sum_{k=0}^n
                \sum_{\substack{\ell,m\ge0\\\ell+m=k-1}}
                \sum_{\alpha=0}^\ell\sum_{\beta=0}^m {n \choose k}_{r+s}{\ell\choose\alpha}_r
                    {m\choose \beta}_s \varphi_{2\alpha}\:\psi_{2\beta}.
\end{align*}
Noticing that we can choose the even coefficients $\varphi_{2\alpha}$ and $\psi_{2\beta}$ freely,
we compare coefficients and derive the following.

\begin{theorem}
\label{thrm:Identities1}
For $n,\alpha,\beta\ge 0$ and $r,s\in \mathbb Z$ the following identity holds
\begin{align}
\label{eq:cubic identity}
    {n -\beta \choose \alpha}_r+{n-\alpha\choose \beta}_s
    =
    {n \choose \alpha+\beta}_{r+s}+\sum_k\sum_{\substack{\ell,m\ge 0,\\ \ell+m=k-1}}
        {n \choose k}_{r+s}{\ell \choose \alpha}_r{m\choose \beta}_s.
\end{align}
\end{theorem}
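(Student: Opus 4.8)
The plan is to exploit the $\mathbb Z$-graded algebra structure of $\mathcal F$ exactly as it is set up in the paragraph preceding the theorem statement, so the entire identity \eqref{eq:cubic identity} emerges as a coefficient comparison. First I would fix $\varphi(x)=\sum_i\varphi_i x^i\in\mathcal F_r$ and $\psi(x)=\sum_j\psi_j x^j\in\mathcal F_s$ arbitrary, and recall from Proposition \ref{prop:GradedAlgebra} that their Cauchy product $\vartheta:=\varphi\cdot\psi$ lies in $\mathcal F_{r+s}$. The key point is that $\vartheta_{2n+1}$ can be computed in \emph{two} different ways. The first way expands the odd coefficient $\vartheta_{2n+1}$ directly via the Cauchy product, splits the convolution into the two symmetric halves (the $\varphi_{\mathrm{odd}}\psi_{\mathrm{even}}$ and $\varphi_{\mathrm{even}}\psi_{\mathrm{odd}}$ contributions), and then applies Theorem \ref{thrm:OddFromEven} in the unified form \eqref{eq:thrmOddFromEvenUnified} to each odd factor to reduce everything to a bilinear expression in the \emph{even} coefficients $\varphi_{2\alpha}\psi_{2\beta}$. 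Re-indexing by $\alpha,\beta$ produces the left side of \eqref{eq:cubic identity} as the coefficient of $\varphi_{2\alpha}\psi_{2\beta}$.

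The second way first applies \eqref{eq:thrmOddFromEvenUnified} to $\vartheta$ itself in $\mathcal F_{r+s}$, writing $\vartheta_{2n+1}=\sum_k{n\choose k}_{r+s}\vartheta_{2k}$, and only then expands each \emph{even} coefficient $\vartheta_{2k}$ by the Cauchy product. Here $\vartheta_{2k}=\sum_{\ell}(\varphi_{2\ell}\psi_{2(k-\ell)}+\varphi_{2\ell+1}\psi_{2(k-\ell)-1})$; the purely-even half contributes the single term ${n\choose\alpha+\beta}_{r+s}$ after re-indexing, while the odd$\times$odd half requires one further application of \eqref{eq:thrmOddFromEvenUnified} to both $\varphi_{2\ell+1}$ and $\psi_{2(k-\ell)-1}$, yielding the triple-sum term $\sum_k\sum_{\ell+m=k-1}{n\choose k}_{r+s}{\ell\choose\alpha}_r{m\choose\beta}_s$. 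Collecting the coefficient of $\varphi_{2\alpha}\psi_{2\beta}$ gives the right side of \eqref{eq:cubic identity}.

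The conclusion follows because the even coefficients $\varphi_{2\alpha}$ and $\psi_{2\beta}$ can be chosen freely and independently: by Corollary \ref{cor:OddFromEvBasic}, for any prescribed sequence of even coefficients there is a genuine element of $\mathcal F_r$ (respectively $\mathcal F_s$) realizing them. Hence the two expressions for $\vartheta_{2n+1}$ must agree coefficient-by-coefficient as bilinear forms in $\{\varphi_{2\alpha}\}$ and $\{\psi_{2\beta}\}$, which is precisely \eqref{eq:cubic identity}. Since the identity is a polynomial identity in $r,s$ (after the unified notation is interpreted via \eqref{simplified cases}), establishing it for the values of $r,s$ realized by actual elements of $\mathcal F$ — namely all integers — suffices for all $r,s\in\mathbb Z$.

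I expect the main obstacle to be purely bookkeeping rather than conceptual: the re-indexing in the two computations must be done carefully so that the summation variables $\alpha,\beta,k,\ell,m$ line up, and one must verify that the degenerate boundary contributions (terms where an index forces a symbol ${n\choose i}_r$ into the range where it vanishes, e.g. $i<0$ or $i>n$) are correctly absorbed. In particular, confirming that the ranges of the inner sums in the second computation genuinely collapse to the stated constraint $\ell+m=k-1$, and that no stray terms survive at the edges, is the delicate step. Since both computations are already carried out explicitly in the display preceding the theorem, the proof reduces to invoking the free-choice principle from Corollary \ref{cor:OddFromEvBasic} and reading off the coefficient of $\varphi_{2\alpha}\psi_{2\beta}$, which I would state as the one-line justification completing the argument.
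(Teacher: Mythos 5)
Your proposal is correct and follows essentially the same route as the paper: computing $\vartheta_{2n+1}$ for the product $\varphi\cdot\psi\in\mathcal F_{r+s}$ in two ways (Cauchy product first versus Equation \eqref{eq:thrmOddFromEvenUnified} first), then comparing coefficients of $\varphi_{2\alpha}\psi_{2\beta}$ via the free choice of even coefficients guaranteed by Corollary \ref{cor:OddFromEvBasic}. The paper carries out exactly these two expansions in the displays preceding the theorem, so your plan reproduces its argument.
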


In view of Equation \eqref{simplified cases}, the above relations can be seen as cubic relations
for the even indexed Bernoulli numbers. These generalize relations encountered in
\cite[Corollary 5.2]{HHS} for the case $r=s=0$.

% xxxxxxxxxxxxxxxxxxxxxxxxxxxxxxxxxxxxxxxxxxxxxxxxxxxxxxxxxxxxxxxxxxxxxxxxx

\subsection{Identities derived from Theorem \ref{thrm:EvenFromOdd}}
\label{subsec:Identities2}

In a similar fashion, we now derive identities from Theorem \ref{thrm:EvenFromOdd}.
To this end we introduce the following notation.  For $n \ne-r/2$ and $0\le i\le n$,
we define
\begin{align}
\label{eq:unified notation even from odd}
    {n\brack i}_r:=
        \begin{cases}
        \frac{2}{2n+r}{2n+r\choose 2i+r}B_{2(n-i)},
            &   n\ge\operatorname{max}\{0,\lfloor-r/2\rfloor+1\},
        \\
        \frac{2}{2i+r}{-2i-r\choose -2n-r}B_{2(n-i)},
            &   0\le n\le\lfloor-(r+1)/2\rfloor.
\end{cases}
\end{align}
With this, we can restate Theorem \ref{thrm:EvenFromOdd} as follows: A power series
$\varphi(x)=\sum_{i=0}^\infty\gamma_i\,x^i$ is an element of $\mathcal F_r$ if and only if
\begin{equation}
\gamma_{2n}=\sum_{i=0}^n
{n \brack i}_r \gamma_{2i+1} \quad \mbox{for all }n\ge 0\mbox{ with }n\ne -r/2
\end{equation}
and, in the case $r\le 0$ is even, $\gamma_{1-r}=0$.

Again we investigate the fact that the product $(\varphi\cdot\psi)(x)=\sum_{i=0}^\infty\vartheta_i\,x^i$
of two series $\varphi(x):=\sum_{i=0}^\infty\varphi_i\,x^i\in \mathcal F_r$ and
$\psi(x):=\sum_{j=0}^\infty\psi_j\,x^j\in \mathcal F_s$ is an element of $\mathcal F_{r+s}$.
For simplicity, we assume that $r$ and $s$ are both elements of the set of positive or odd integers.
We observe that
\begin{align*}
    \vartheta_{2n}
        &=  \sum_{\substack{\ell,m\ge 0,\\ \ell+m=n-1}}
            \varphi_{2\ell+1}\,\psi_{2m+1}+\sum_{\substack{\ell,m\ge 0,\\ \ell+m=n}}\varphi_{2\ell}\,\psi_{2m}
        \\
        &=  \sum_{\substack{\ell,m\ge 0,\\ \ell+m=n-1}}\varphi_{2\ell+1}\,\psi_{2m+1}
                +\sum_{\substack{\ell,m\ge 0,\\ \ell+m=n}}\sum_{i=0}^\ell\sum_{j=0}^m{\ell\brack i}_r
                    {m\brack j}_s\varphi_{2i+1}\,\psi_{2j+1}
        \\
        &=  \sum_{\alpha,\beta\ge0}\varphi_{2\alpha+1}\,\psi_{2\beta+1}
                \left(\delta_{\alpha+\beta,n-1} + \sum_{\substack{\ell\ge \alpha,m\ge \beta,\\ \ell+m=n}}
                {\ell\brack\alpha}_r{m\brack \beta}_s\right),
\end{align*}
where in the last line, we use the Kronecker delta $\delta_{i,j}$, i.e.
$\delta_{i,j}=1$ if $i=j$ and $0$ otherwise.

On the other hand, for $n \neq -(r+s)/2$
\begin{align*}
    \vartheta_{2n}
    &=  \sum_{k=0}^n{n\brack k}_{r+s}\vartheta_{2k+1}
    \\
    &=  \sum_{k=0}^n\sum_{\ell=0}^k{n\brack k}_{r+s}
         \left( \varphi_{2\ell}\,\psi_{2(k-\ell)+1}+\varphi_{2(k-\ell)+1}\,\psi_{2\ell} \right)
    \\
    &=  \sum_{k=0}^n\sum_{\ell=0}^k\sum_{m=0}^\ell{n\brack k}_{r+s}
            \left( {\ell\brack m}_{r}\varphi_{2m+1}\psi_{2(k-\ell)+1}+{\ell\brack m}_{s}
            \varphi_{2(k-\ell)+1}\psi_{2m+1} \right)
    \\
    &=  \sum_{\alpha,\beta\ge 0}\varphi_{2\alpha+1}\psi_{2\beta+1}
            \sum_{k=0}^n{n\brack k}_{r+s}\left( {k-\beta \brack \alpha}_{r}+{k-\alpha\brack \beta}_{s}\right).
\end{align*}
Comparing these two expressions yields the following.

\begin{theorem}
\label{thrm:Identities2}
Let $r$ and $s$ be elements of the set of positive or odd integers.
Then for every $n\ge 0$ with $n \neq -(r+s)/2$ and integers $\alpha,\beta$
such that $0\le \alpha,\beta\le n$, we have the identity
\begin{align}
    \delta_{\alpha+\beta,n-1} + \sum_{\substack{\ell\ge \alpha,m\ge \beta,\\ \ell+m=n}}
    {\ell\brack\alpha}_r{m\brack \beta}_s
    =
    \sum_{k=0}^n{n\brack k}_{r+s}\left( {k-\beta \brack \alpha}_{r}+{k-\alpha\brack \beta}_{s}\right).
\end{align}
\end{theorem}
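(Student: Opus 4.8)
The plan is to mirror exactly the structure used to prove Theorem \ref{thrm:Identities1}, but feeding in the characterization of Theorem \ref{thrm:EvenFromOdd} (as restated via the notation ${n\brack i}_r$ in Equation \eqref{eq:unified notation even from odd}) rather than that of Theorem \ref{thrm:OddFromEven}. The engine is the $\Z$-graded algebra structure of $\calF$ from Proposition \ref{prop:GradedAlgebra}: given $\varphi(x)\in\calF_r$ and $\psi(x)\in\calF_s$, their Cauchy product lies in $\calF_{r+s}$, so we can compute a fixed coefficient $\vartheta_{2n}$ of the product in two ways and equate the results. The restriction that $r$ and $s$ lie in the set of positive or odd integers is precisely what guarantees that the recursion ``$\gamma_{2n}$ in terms of the $\gamma_{2i+1}$'' from Theorem \ref{thrm:EvenFromOdd} holds for \emph{all} $n\ge 0$ (with $n\ne -r/2$), with no unconstrained even coefficient or extra vanishing constraint entering the range of summation; this avoids the delicate book-keeping around the exceptional index $\gamma_{-r}$ in the even nonpositive case.

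First I would expand $\vartheta_{2n}$ directly from the Cauchy product, splitting into the $\varphi_{\text{odd}}\psi_{\text{odd}}$ part (indices summing to $n-1$) and the $\varphi_{\text{even}}\psi_{\text{even}}$ part (indices summing to $n$), then rewrite the even-even part using the Theorem \ref{thrm:EvenFromOdd} relations ${\ell\brack i}_r$ and ${m\brack j}_s$ to express everything in terms of the odd coefficients $\varphi_{2\alpha+1}$ and $\psi_{2\beta+1}$. This is the left-hand side, and the Kronecker delta $\delta_{\alpha+\beta,n-1}$ naturally records the odd-odd contribution. Second, I would compute $\vartheta_{2n}$ a different way: apply the $\calF_{r+s}$ relation ${n\brack k}_{r+s}$ to write $\vartheta_{2n}$ in terms of the $\vartheta_{2k+1}$, expand each $\vartheta_{2k+1}$ via the Cauchy product into mixed even$\times$odd terms, and then use the Theorem \ref{thrm:EvenFromOdd} relations once more to convert the surviving even coefficients $\varphi_{2m}$ (and $\psi_{2m}$) back into odd coefficients. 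This yields the right-hand side.

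The decisive step is the comparison of coefficients: since $r,s$ lie in the admissible set, every odd coefficient $\varphi_{2\alpha+1}$ and $\psi_{2\beta+1}$ may be prescribed freely and independently (this is the content of the uniqueness/freedom statement at the start of the proof of Theorem \ref{thrm:EvenFromOdd}), so the two expressions for $\vartheta_{2n}$, both displayed as sums of $\varphi_{2\alpha+1}\psi_{2\beta+1}$ against explicit coefficients, must agree term by term. Matching the coefficient of $\varphi_{2\alpha+1}\psi_{2\beta+1}$ for each fixed $\alpha,\beta$ produces precisely the claimed identity.

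The main obstacle I anticipate is purely bookkeeping rather than conceptual: ensuring that the summation ranges line up so that the ``free choice of odd coefficients'' argument is legitimate for the stated range $0\le\alpha,\beta\le n$ and $n\ne-(r+s)/2$. Concretely, I must verify that when $r$ or $s$ is negative (but odd, hence in the admissible set), the lower endpoints in the conversion sums do not silently drop or duplicate terms, and that the exclusion $n\ne-(r+s)/2$ is exactly what is needed so that the ${n\brack k}_{r+s}$ relation is available. Once the two expansions are written with matching index conventions, the comparison is immediate; the care lies entirely in aligning the two notations for ${n\brack i}_r$ in Equation \eqref{eq:unified notation even from odd} across the positive and negative-odd regimes.
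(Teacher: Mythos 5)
Your proposal is correct and coincides essentially verbatim with the paper's own proof: both compute $\vartheta_{2n}$ for the product $\varphi\cdot\psi\in\calF_{r+s}$ in two ways---directly from the Cauchy product with the even--even part converted through the ${\ell\brack i}_r$, ${m\brack j}_s$ relations of Theorem \ref{thrm:EvenFromOdd}, and via the ${n\brack k}_{r+s}$ recursion applied to the $\vartheta_{2k+1}$---and then compare the coefficients of $\varphi_{2\alpha+1}\psi_{2\beta+1}$, which is legitimate because the odd coefficients may be prescribed freely when $r$ and $s$ are positive or odd. Your remarks on why the admissibility hypothesis removes the unconstrained-even-coefficient pathology and why $n\neq -(r+s)/2$ is exactly the condition making the $\calF_{r+s}$ relation available are precisely the points the paper relies on.
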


In view of Equation \eqref{eq:unified notation even from odd}, the above relations can be
interpreted as quadratic identities for the even indexed Bernoulli numbers.
Note that when either $r$ or $s$ is both nonpositive and even, the existence of unconstrained
even indexed coefficients in Theorem \ref{thrm:EvenFromOdd} affects carrying out a similar derivation.

% xxxxxxxxxxxxxxxxxxxxxxxxxxxxxxxxxxxxxxxxxxxxxxxxxxxxxxxxxxxxxxxxxxxxxxxxx
% xxxxxxxxxxxxxxxxxxxxxxxxxxxxxxxxxxxxxxxxxxxxxxxxxxxxxxxxxxxxxxxxxxxxxxxxx
% xxxxxxxxxxxxxxxxxxxxxxxxxxxxxxxxxxxxxxxxxxxxxxxxxxxxxxxxxxxxxxxxxxxxxxxxx

\section{Power sum formulas revisited}
\label{sec:PowerSum}

In this section, we indicate connections between Theorems \ref{thrm:Relations}, \ref{thrm:LaurentPoly},
\ref{thrm:OddFromEven}, and \ref{thrm:EvenFromOdd} and the work of Gould \cite{Gould}
on power sum identities, clarified by Carlitz \cite{Carlitz}. In particular, we demonstrate
that many of the main results of \cite{Gould} can be recovered as special cases of Theorems
\ref{thrm:OddFromEven} and \ref{thrm:EvenFromOdd}.

In \cite{Gould}, Gould investigated power sums of the form
\[
    \sum\limits_{k=0}^n k^p f_n(k)
\]
where $p \geq 0$ is an integer and, for integers $0 \leq k \leq n$,
the $f_n(k) \in \K$ form a triangular array that is symmetric in the
sense that $f_n(k) = f_n(n-k)$.

To relate Gould's results to those contained here, define
\begin{equation}
\label{eq:G-CSnpDef}
    S_{n,p} := \frac{1}{n^p} \sum\limits_{k=0}^n k^p f_n(k)
    =   \sum\limits_{k=0}^n \frac{k^p}{n^p} f_n(k),
\end{equation}
and consider the generating function
\[
    \varphi_n(x)
    :=  \sum\limits_{p=0}^\infty S_{n,p} x^p
    =   \sum\limits_{k=0}^n f_n(k) \sum\limits_{p=0}^\infty \frac{k^p}{n^p} x^p
    =   \sum\limits_{k=0}^n \frac{f_n(k)}{1 - \frac{k}{n} x}.
\]
We claim that $\varphi_n(x) \in \calF_1$. To see this, we use the symmetry
$f_n(k) = f_n(n-k)$ to compute
\begin{align*}
    2\varphi_n(x)
    &=      \sum\limits_{k=0}^n \left[ \frac{f_n(k)}{1-\frac{k}{n}x}
                + \frac{f_n(n-k)}{1 - \frac{n-k}{n} x} \right]
    \\&=    \sum\limits_{k=0}^n \left[ \frac{2 - x}
                {\left(1-\frac{k}{n}x\right)
                \left(1 - \frac{n-k}{n} x \right)} \right] f_n(k)
    \\&=    - \frac{1}{x-2}\sum\limits_{k=0}^n \frac{4 + \frac{x^2}{1-x} }
                {1 + \frac{k(n-k)}{n^2} \frac{x^2}{1-x}} f_n(k)
    \\&=    \frac{1}{x-2} \rho\left(\frac{x^2}{1-x}\right),
\end{align*}
which is an element of $\calF_1$ by Theorems \ref{thrm:Deg0HHS} and \ref{thrm:LaurentPoly}.

As the first consequence of this fact, by Theorem \ref{thrm:Relations},
we have that for each $m \geq 1$,
\begin{equation}
\label{eq:G-CRelat}
    \sum\limits_{i=0}^m (-1)^i {2m-1 \choose m-i}{m+i \choose i} S_{n,m+i-1} = 0.
\end{equation}
Following \cite[Equation (6)]{Gould}, we define
\[
    Q_i^m := {m\choose i} + 2{m\choose i-1} = \frac{m! (m+i+1)}{i!(m-i+1)!}
\]
and observe that by a simple computation,
\[
    {2m-1 \choose m-i}{m+i \choose i} = \frac{(2m-1)!}{m! (m-1)!} Q_i^{m-1} .
\]
Hence, after dividing out the constant $\frac{(2m-1)!}{m! (m-1)!}$,
Equation \eqref{eq:G-CRelat} can be rewritten as
\[
    \sum\limits_{i=0}^m (-1)^i Q_i^{m-1} S_{n,m+i-1} = 0.
\]
Substituting Equation \eqref{eq:G-CSnpDef} and multiplying by $n^{2m-1}$ yields
\[
    \sum\limits_{i=0}^m (-1)^i Q_i^{m-1} n^{m-i} \sum\limits_{k=0}^n k^{m+i-1} f_n(k) = 0.
\]
This is precisely \cite[Equation (5)]{Gould}, where in the reference the term corresponding
to $i = m$ is separated to one side of the equation.

As the second application of the fact that $\varphi_n(x) \in \calF_1$,
Theorem \ref{thrm:OddFromEven} implies
\begin{equation}
\label{eq:G-COddEven}
    S_{n,2m+1} = \sum\limits_{i=0}^m {m \brace i} S_{n, 2i}.
\end{equation}
We claim that up to a constant factor, this is equivalent to \cite[Equation (15)]{Gould},
which can be expressed using Equation \eqref{eq:G-CSnpDef} as
\begin{equation}
\label{eq:G-C15}
    2^{\lfloor (m+3)/2\rfloor} S_{n,2m+1} = \sum\limits_{i=0}^m A_i^m S_{n,2i},
\end{equation}
where the coefficients $A_i^m$ are defined recursively in \cite[Equation (16)]{Gould}:
\[
    2^{-\lfloor (k+2)/2\rfloor}A_i^{k+1}
    =
    {2k+3\choose 2i} - \sum\limits_{j=i}^k {2k+3\choose 2j+1}
    2^{-\lfloor (j+3)/2\rfloor} A_i^j,
    \quad\quad 0\leq i\leq k
\]
with seed values $A_i^i = (2i+1) 2^{\lfloor (i+1)/2\rfloor}$, $i\ge 0$.

To see this, using \cite[Equation (18)]{Gould} as well as \cite[Equation (1.5)]{Carlitz}
to express the $A_0^{m-i}$ in terms of Bernoulli numbers, we have that for integers $m$ and $i$
with $m\geq i \geq 0$,
\begin{align*}
    A_i^m
    &= \begin{cases}
        {2m+1\choose 2m-2i+1} 2^{\lfloor i/2\rfloor} A_0^{m-i},
            &   \mbox{$m-i$ odd},
            \\
        {2m+1\choose 2m-2i+1} 2^{\lfloor (i+1)/2\rfloor} A_0^{m-i},
            &   \mbox{$m-i$ even},
    \end{cases}
    \\&= \begin{cases}
        {2m+1\choose 2i} 2^{\lfloor i/2\rfloor}
            2^{(m-i+3)/2} \left(4^{m-i+1} - 1\right)\frac{B_{2(m-i+1)}}{m-i+1},
            &   \mbox{$m-i$ odd},
            \\
        {2m+1\choose 2i} 2^{\lfloor (i+1)/2\rfloor}
            2^{(m-i+2)/2} \left(4^{m-i+1} - 1\right)\frac{B_{2(m-i+1)}}{m-i+1},
            &   \mbox{$m-i$ even},
    \end{cases}
    \\&=
    {2m+1\choose 2i} 2^{\lfloor (m+3)/2\rfloor} \left(4^{m-i+1} - 1\right)\frac{B_{2(m-i+1)}}{m-i+1}
    \\&=
    2^{\lfloor (m+3)/2\rfloor}{m \brace i}.
\end{align*}
With this, Equations \eqref{eq:G-COddEven} and \eqref{eq:G-C15} are clearly equivalent.

As our final application of $\varphi_n(x) \in \calF_1$, we apply Theorem
\ref{thrm:EvenFromOdd} to obtain that for all $m \geq 0$,
\begin{equation}
\label{eq:G-C29our}
    S_{n,2m} = \frac{2}{2m+1}\sum\limits_{i=0}^m {2m+1\choose 2i+1} B_{2m-2i} S_{n,2i+1}.
\end{equation}
We set
\[
    C_m = 1\cdot 3\cdot 5 \cdot 7 \cdot \ldots \cdot (2m+1)
\]
following \cite[page 314]{Gould} and note that Gould's $G_i^m$ \cite[page 314]{Gould} are shown in
\cite[Equation (1.6)]{Carlitz} to be given by
\[
    G_i^m = 2\cdot 1\cdot 3 \cdot 5\cdot \ldots \cdot (2m-1) {2m+1\choose 2i+1} B_{2m-2i}.
\]
Then Equation \eqref{eq:G-C29our} is evidently equivalent to
\[
    C_m S_{n,2m} = \sum\limits_{i=0}^m G_i^m S_{n,2i+1},
\]
which recovers \cite[Equation (29)]{Gould}.

% xxxxxxxxxxxxxxxxxxxxxxxxxxxxxxxxxxxxxxxxxxxxxxxxxxxxxxxxxxxxxxxxxxxxxxxxx
% xxxxxxxxxxxxxxxxxxxxxxxxxxxxxxxxxxxxxxxxxxxxxxxxxxxxxxxxxxxxxxxxxxxxxxxxx
% xxxxxxxxxxxxxxxxxxxxxxxxxxxxxxxxxxxxxxxxxxxxxxxxxxxxxxxxxxxxxxxxxxxxxxxxx

\section{Coefficient triangles}
\label{sec:CoefTriang}

In this section, we give a few examples of the first portions of the coefficient triangles
resulting from Equations \eqref{eq:DegreePositiveEven}, \eqref{eq:DegreePositiveOdd},
\eqref{eq:DegreeAnyFirst}, and \eqref{eq:DegreeNegativeOdd} to indicate
their structure. Of interest is the appearance of the \emph{Lucas triangle}, introduced in
\cite{Feinberg}, which satisfies the same recursion as Pascal's triangle but begins with
row $n=0$ with entry $2$ and row $n=1$ with entries $1, 2$, see Table \ref{tab:Lucas}.
For $n\geq 1$, the $j$th entry of the $n$th row of the Lucas triangle is given by
\begin{equation}
\label{eq:Lucas}
    \frac{n+j}{n}{n\choose j},
\end{equation}
see \cite[Equation (14)]{NevilleLucas}.

\begin{table}[h]
\small
\setlength{\tabcolsep}{-5pt}
\begin{tabular}{P{24pt}P{24pt}P{24pt}P{24pt}P{24pt}P{24pt}P{24pt}P{24pt}
    P{24pt}P{24pt}P{24pt}P{24pt}P{24pt}P{24pt}P{24pt}P{24pt}P{24pt}}
 &&&&&&&& $2$
 \\
 &&&&&&& $1$ && $2$
 \\
 &&&&&& $1$ && $3$ && $2$
 \\
 &&&&& $1$ && $4$ && $5$ && $2$
 \\
 &&&& $1$ && $5$ && $9$ && $7$ && $2$
 \\
 &&& $1$ && $6$ && $14$ && $16$ && $9$ && $2$
 \\
 && $1$ && $7$ && $20$ && $30$ && $25$ && $11$ && $2$
 \\
 & $1$ && $8$ && $27$ && $50$ && $55$ && $36$ && $13$ && $2$
 \\
 $1$ && $9$ && $35$ && $77$ && $105$ && $91$ && $49$ && $15$ && $2$
 \\
 &&&&&&&& $\;\;\cdots$
\end{tabular}
\caption{The Lucas triangle.}
\label{tab:Lucas}
\end{table}

For the coefficient triangles given below, the row labeled $i$
lists the coefficients of $\gamma_i, \gamma_{i+1}, \ldots$ in the corresponding constraint.

When $r = 0$, Equation \eqref{eq:DegreePositiveEven} evidently yields Pascal's triangle with
an alternating sign. When $r$ is even and positive, Equation \eqref{eq:DegreePositiveEven}
yields Pascal's triangle with an upper-left portion removed due to the fact that $\gamma_i = 0$
for $i < 0$. This is illustrated in Table \ref{tab:r8} with the case $r = 8$.

\begin{table}[h]
\small
\setlength{\tabcolsep}{-5pt}
\begin{tabular}{l P{24pt}P{24pt}P{24pt}P{24pt}P{24pt}P{24pt}P{24pt}P{24pt}P{24pt}
    P{24pt}P{24pt}P{24pt}P{24pt}P{24pt}P{24pt}P{24pt}P{24pt}P{24pt}P{24pt}P{24pt}}
 $0\!:$&&&&&&&&&&&&& $-4$ && $1$
 \\
 $0\!:$&&&&&&&&&& $10$ && $-10$ && $5$ && $-1$
 \\
 $0\!:$&&&&&&& $-6$ && $15$ && $-20$ && $15$ && $-6$ && $1$
 \\
 $0\!:$&&&& $1$ && $-7$ && $21$ && $-35$ && $35$ && $-21$ && $7$ && $-1$
 \\
 $1\!:$&&& $1$&& $-8$ && $28$ && $-56$ && $70$ && $-56$ && $28$ && $-8$ && $1$
 \\
 $2\!:$\hspace{2cm}&&
      $1$&& $-9$ && $36$ && $-84$ && $126$ && $-126$ && $84$ && $-36$ && $9$ && $-1$
 \\
 &&&&&&&&&&& $\;\;\cdots$
\end{tabular}
\caption{The coefficient triangle of Equation \eqref{eq:DegreePositiveEven} when $r=8$.}
\label{tab:r8}
\end{table}

When $r=1$, the beginning of the coefficient triangle is given in Table \ref{tab:r1}. Note that
many rows have a common factor, and dividing by the $\gcd$ of each row yields the Lucas triangle
with alternating signs. To see this, note that Equation \eqref{eq:DegreePositiveOdd} can be
rewritten as
\[
    \frac{(2m + r - 2)!}{m!(m-1)!}\sum\limits_{i=0}^{m}
        (-1)^i \frac{(m+i)!}{(m + i + r - 2)!m} {m \choose i} \gamma_{m + i - 1} = 0.
\]
Dividing out the constant $\frac{(2m + r - 2)!}{m!(m-1)!}$ and setting $r=1$ yields
Equation \eqref{eq:Lucas} with alternating signs.

\begin{table}[h]
\small
\setlength{\tabcolsep}{-3pt}
\begin{tabular}{lP{24pt}P{24pt}P{24pt}P{24pt}P{24pt}P{24pt}P{24pt}P{24pt}P{24pt}
    P{24pt}P{24pt}P{24pt}P{24pt}P{24pt}P{24pt}P{24pt}P{24pt}}
 $0\!:$&&&&&&&& $1$ &&  $-2$
 \\
 $1\!:$&&&&&&& $3$ && $-9$ && $6$
 \\
 $2\!:$&&&&&& $10$ && $-40$ && $50$ && $-20$
 \\
 $3\!:$&&&&& $35$ && $-175$ && $315$ && $-245$ && $70$
 \\
 $4\!:$&&&& $126$ && $-756$ && $1764$ && $-2016$ && $1134$ && $-252$
 \\
 $5\!:$&&& $462$ && $-3234$ && $\;\, 9240$ && $-13860$ && $\;\, 11550$ && $-5082$ && $924$
 \\
 $6\!:$&& $\;\; 1716$ && $-13728$ && $\;\, 46332$ && $-85800$ && $\;\, 94380$ && $-61776$ && $\;\, 22308$ && $-3432$
 \\
 &&&&&&&&& $\;\; \cdots$
\end{tabular}
\caption{The coefficient triangle of Equation \eqref{eq:DegreePositiveOdd} when $r=1$.}
\label{tab:r1}
\end{table}

When $r>1$ is odd and positive, a similar coefficient triangle is obtained, which is illustrated
with the case $r = 9$ in Table \ref{tab:r9}.

\begin{table}[h]
\small
\setlength{\tabcolsep}{-2pt}
\begin{tabular}{lP{24pt}P{24pt}P{24pt}P{24pt}P{24pt}P{24pt}P{24pt}P{24pt}P{24pt}
    P{24pt}P{24pt}P{24pt}P{24pt}P{24pt}P{24pt}P{24pt}P{24pt}}
 $0\!:$&&&&&&&& $9$ &&  $-2$
 \\
 $1\!:$&&&&&&& $55$ && $-33$ && $6$
 \\
 $2\!:$&&&&&& $286$ && $-312$ && $130$ && $-20$
 \\
 $3\!:$&&&&& $1365$ && $-2275$ && $1575$ && $-525$ && $70$
 \\
 $4\!:$&&&& $6188$ && $-14280$ && $14280$ && $-7616$ && $2142$ && $-252$
 \\
 $5\!:$&&& $27132$ && $-81396$ && $\;\; 108528$ && $-81396$ && $\;\; 35910$ && $-8778$ && $924$
 \\
 $6\!:$&& $116280$ && $-434112$ && $\;\; 732564$ && $-718200$ && $\;\; 438900$ && $-166320$ && $\;\; 36036$ && $-3432$
 \\
 &&&&&&&&& $\;\;\cdots$
\end{tabular}
\caption{The coefficient triangle of Equation \eqref{eq:DegreePositiveOdd} when $r=9$.}
\label{tab:r9}
\end{table}

When $r$ is even and negative, the result is eventually Pascal's triangle with alternating
signs but begins with a finite positive triangle corresponding to Equation
\eqref{eq:DegreeAnyFirst}; see Table \ref{tab:rm8}.

\begin{table}[h]
\setlength{\tabcolsep}{-3pt}
\begin{tabular}{lP{24pt}P{24pt}P{24pt}P{24pt}P{24pt}P{24pt}P{24pt}P{24pt}P{24pt}
    P{24pt}P{24pt}P{24pt}P{24pt}P{24pt}P{24pt}P{24pt}P{24pt}}
 $0\!:$&&&&& $\frac{1}{9}$ &&  $\frac{1}{36}$
 \\
 $1\!:$&&&& $\frac{1}{36}$ && $\frac{1}{42}$ && $\frac{1}{126}$
 \\
 $2\!:$&&& $\frac{1}{84}$ && $\frac{1}{42}$ && $\frac{1}{42}$ && $\frac{1}{84}$
 \\
 $3\!:$&& $\frac{1}{126}$ && $\frac{2}{63}$ && $\frac{1}{14}$ && $\frac{1}{9}$ && $\frac{1}{9}$
 \\ \\
 $9\!:$&&&&&& $1$
 \\
 $10\!:$&&&&& $1$ &&  $-1$
 \\
 $11\!:$&&&& $1$ && $-2$ && $1$
 \\
 $12\!:$&&& $1$ && $-3$ && $3$ && $-1$
 \\
 &&&&&&$\;\;\cdots$
\end{tabular}
\caption{The coefficient triangle of Equations \eqref{eq:DegreePositiveEven}
and \eqref{eq:DegreeAnyFirst} when $r=-8$.}
\label{tab:rm8}
\end{table}

When $r$ is odd and negative, Equation \eqref{eq:DegreeNegativeOdd} yields the Lucas triangle
with an alternating sign; this can easily be seen by comparing
Equation \eqref{eq:DegreeNegativeOdd} to Equation~\eqref{eq:Lucas}. Hence, the result starts with a finite
positive triangle given by Equation \eqref{eq:DegreeAnyFirst} followed by an alternating sign Lucas triangle;
see Table \ref{tab:rm9} for an example.

\begin{table}[h]
\setlength{\tabcolsep}{-3pt}
\begin{tabular}{lP{25pt}P{25pt}P{25pt}P{25pt}P{25pt}P{25pt}P{25pt}P{25pt}P{25pt}P{25pt}P{25pt}P{25pt}P{25pt}P{25pt}P{25pt}P{25pt}}
$0\!:$ &&&&&& $\frac{1}{10}$ &&  $\frac{1}{45}$
 \\
$1\!:$ &&&&& $\frac{1}{45}$ && $\frac{1}{60}$ && $\frac{1}{210}$
 \\
$2\!:$ &&&& $\frac{1}{120}$ && $\frac{1}{70}$ && $\frac{1}{84}$ && $\frac{1}{210}$
 \\
$3\!:$ &&& $\frac{1}{210}$ && $\frac{1}{63}$ && $\frac{1}{35}$ && $ \frac{1}{30}$ && $\frac{1}{45}$
 \\
$4\!:$ && $\frac{1}{252}$ && $\frac{1}{42}$ && $\frac{1}{12}$ && $\frac{2}{9}$ && $\frac{1}{2}$ && $1$
 \\ \\
$10\!:$\hspace{10pt} &&&&&& $1$ &&  $-2$
 \\
$11\!:$ &&&&& $1$ && $-3$ && $2$
 \\
$12\!:$ &&&& $1$ && $-4$ && $5$ && $-2$
 \\
&&&&&&&$\;\;\cdots$
\end{tabular}
\caption{The coefficient triangle of Equations \eqref{eq:DegreeAnyFirst}
and \eqref{eq:DegreeNegativeOdd} when $r=-9$.}
\label{tab:rm9}
\end{table}

% xxxxxxxxxxxxxxxxxxxxxxxxxxxxxxxxxxxxxxxxxxxxxxxxxxxxxxxxxxxxxxxxxxxxxxxxx

\subsection{Hidden extra symmetries}
\label{subsec:HiddenSymm}

As a final note, we make a brief observation about an alternative rescaling of Equations
\eqref{eq:DegreeAnyFirst} and \eqref{eq:DegreePositiveEven} that yields an additional symmetry.

Suppose $r=1-2k$ is odd and negative. Then Equation \eqref{eq:DegreeAnyFirst} can be rewritten as
\begin{equation}
\label{eq:DegreeAnyFirstRescaled}
    \frac{m!(k-m)!}{(2k)!}\sum\limits_{i=0}^{m}
        \frac{(m+i)!(2k-m-i)!}{(m-i)!i!(k-m)!}\gamma_{m + i - 1} = 0,
\end{equation}
yielding a system of equations that is invariant under the substitution $(m,i) \mapsto (k-i,k-m)$.
That is, the corresponding finite positive triangle rescaled in this way has a slanted symmetry
where the lower left corner can be treated as the top of a symmetric number array. This is illustrated
in Table \ref{tab:rm9rescaled} which gives the rescaled upper triangle of Table \ref{tab:rm9}.

\begin{table}[h]
\setlength{\tabcolsep}{-3pt}
\begin{tabular}{lP{25pt}P{25pt}P{25pt}P{25pt}P{25pt}P{25pt}P{25pt}P{25pt}P{25pt}P{25pt}P{25pt}P{25pt}}
$0\!:$ &&&&&& $15120$ &&  $3360$
 \\
$1\!:$ &&&&& $6720$ && $5040$ && $1440$
 \\
$2\!:$ &&&& $2520$ && $4320$ && $3600$ && $1440$
 \\
$3\!:$ &&& $720$ && $2400$ && $4320$ && $5040$ && $3360$
 \\
$4\!:$ && $120$ && $720$ && $2520$ && $6720$ && $15120$ && $30240$
\\ \mbox{}
\end{tabular}
\caption{The rescaled upper triangle of Table \ref{tab:rm9}, corresponding to
Equation \eqref{eq:DegreeAnyFirstRescaled} when $r=-9$.}
\label{tab:rm9rescaled}
\end{table}

In the same way, for any $k \ge m$, Equation \eqref{eq:DegreePositiveEven}
can be rewritten as
\[
    \frac{(m-1)!(k-m)!}{(k-1)!}\sum\limits_{i=0}^{m-1} (-1)^i
        \frac{(k-1)!}{(m-i-1)!i!(k-m)!}\gamma_{m - r + i} = 0,
\]
yielding a system that is again invariant under the substitution $(m,i) \mapsto (k-i,k-m)$.
This demonstrates that multiplying each row by a suitable scalar transforms Pascal's
triangle into a fully symmetric array of multinomial coefficients; see Table~\ref{tab:PascalRescaled}.

\begin{table}[h]
\small
\setlength{\tabcolsep}{-5pt}
\begin{tabular}{P{24pt}P{24pt}P{24pt}P{24pt}P{24pt}P{24pt}P{24pt}P{24pt}
    P{24pt}P{24pt}P{24pt}P{24pt}P{24pt}P{24pt}P{24pt}P{24pt}P{24pt}}
 &&&&&&&& $1$
 \\
 &&&&&&& $8$ && $8$
 \\
 &&&&&& $28$ && $56$ && $28$
 \\
 &&&&& $56$ && $168$ && $168$ && $56$
 \\
 &&&& $70$ && $280$ && $420$ && $280$ && $70$
 \\
 &&& $56$ && $280$ && $560$ && $560$ && $280$ && $56$
 \\
 && $28$ && $168$ && $420$ && $560$ && $420$ && $168$ && $28$
 \\
 & $8$ && $56$ && $168$ && $280$ && $280$ && $168$ && $56$ && $8$
 \\
 $1$ && $8$ && $28$ && $56$ && $70$ && $56$ && $28$ && $8$ && $1$
 \\
 &&&&&&&& $\;\;\cdots$
\end{tabular}
\caption{Pascal's triangle rescaled with respect to row $n=8$.}
\label{tab:PascalRescaled}
\end{table}

% xxxxxxxxxxxxxxxxxxxxxxxxxxxxxxxxxxxxxxxxxxxxxxxxxxxxxxxxxxxxxxxxxxxxxxxxx
% xxxxxxxxxxxxxxxxxxxxxxxxxxxxxxxxxxxxxxxxxxxxxxxxxxxxxxxxxxxxxxxxxxxxxxxxx
% xxxxxxxxxxxxxxxxxxxxxxxxxxxxxxxxxxxxxxxxxxxxxxxxxxxxxxxxxxxxxxxxxxxxxxxxx

\bibliographystyle{amsplain}
\bibliography{HHS-Gorenstein}

\end{document}